\begin{document}

\markboth{L. Zhang et al.}{A novel phase-field model for $N$-phase problems}

%
\catchline{}{}{}{}{}
%

\title{A novel phase-field model for $N$-phase problems: modeling, asymptotic analysis and numerical simulations}

\author{Lun Zhang}

\address{Department of Mathematics, Southern University of Science and Technology,\\
Shenzhen 518055, P. R. China\\
12231273@mail.sustech.edu.cn}

\author{Chenxi Wang}

\address{School of Science, Harbin Institute of Technology,\\
Shenzhen 518055, P. R. China\\
wangcx2017@mail.sustech.edu.cn}

\author{Nan Lu}

\address{Department of Mathematics, Southern University of Science and Technology,\\
Shenzhen 518055, P. R. China\\
lun@sustech.edu.cn}

\author{Zhen Zhang\footnote{Corresponding author.}}

\address{Department of Mathematics, National Center for Applied Mathematics (Shenzhen),\\
Southern University of Science and Technology, Shenzhen 518055, P. R. China\\
zhangz@sustech.edu.cn}

\maketitle

\begin{history}
\received{(Day Month Year)}
\revised{(Day Month Year)}
\comby{(xxxxxxxxxx)}
\end{history}

\begin{abstract}
Multiphase problems involving complex interfaces have found widespread applications in fluid mechanics, materials science, biology, and image processing, and thus have attracted increasing research interests. 
The classical phase-field  modeling approaches for multiphase problems represent each phase using a regularized characteristic function, which necessarily introduces a simplex constraint for the phase-field variables. Additionally, the consistency requirement for phase-field modeling brings difficulties to the construction of nonlinear potentials in the energy functionals, posing significant challenges for classical phase-field modeling and its numerical methods for problems involving many phases. In this work, by adopting a dichotomic approach to represent multiphase, we propose a novel phase-field modeling framework without simplex constraint,
in which the free energy is interpolated from the classical two-phase Ginzburg-Landau free energies. We systematically establish the interpolation rules and explicitly construct the interpolation functions, rendering the consistency properties of the model. 
The proposed model enjoys an energy dissipation property and is shown to be asymptotically consistent with its sharp interface limit, with the Neumann triangle condition recovered at the triple junction.
Based on a mobility operator splitting technique, we develop a linear, decoupled, and energy stable scheme for efficiently solving the system of phase-field equations. 
The numerical stability and accuracy, as well as the consistency properties of the model, are validated through a large number of numerical examples. 
In particular, the model demonstrates its success in several benchmark simulations for multiphase problems, such as the formation of liquid lenses between two stratified fluids,  the generation of double emulsions and Janus emulsions, showing good agreement with experimental observations.
\end{abstract}

\keywords{phase-field; multiphase system; sharp interface limit; operator splitting; energy stability}

\ccode{AMS Subject Classification: 22E46, 53C35, 57S20}

\section{Introduction}
Phase-field methods have become powerful and widely adopted tools for modeling multiphase systems in various scientific areas. In fluid dynamics, phase-field variables are introduced to capture interfacial evolution and phase interactions in multiphase flows and bubble dynamics \cite{boyer2006study,boyer2011numerical,Analysisof,XuThree2019,AiharaHighly2023,ZhangA2022,ZhangMulti2024}. In materials science, phase-field dynamics enable the simulation of phase transitions and microstructural evolution to optimize material properties \cite{Diffuseinterface,LiAnisotropic2021,ZhangCavitation2023,DondlPhase2024,MaHigh2023}. In image processing, phase-field functions are employed to label multiple classes, and therefore image segmentation and edge detection can be achieved by minimizing proper  energy functionals with data fidelity \cite{YangImage2019,QiaoTwo2022,LiuTwo2022,LiuMulti2023,WangMulti2022}. In mathematical biology problems, such as cell behavior, tumor growth, and tissue development, phase-field variables are usually used to capture the distribution and interaction of distinct cellular components and simulate dynamic processes such as vesicle formation, fusion, fission, and phase separation \cite{GuA2016,TangPhase2023,AshourPhase2023,YangPhase2023,WenHydrodynamics2024}. Phase-field methods have demonstrated their strong capability to study the dynamics of complex multiphase systems.

In a typical phase-field model for phase separation problems, a smooth order parameter (namely the phase-field function) is usually introduced as an approximation of the indicator function of one bulk phase. Then the interface between two phases is characterized by the smooth transition of the phase-field function. The dynamics of the whole system can be variationally derived from a proper energy functional of this phase-field function. This idea can be generalized to model multiphase systems. For instance, to model a ternary phase system, a characteristic-based phase-field (CBPF) vector $(c_1,c_2,c_3)$ is introduced as approximation of the characteristic functions  of the three phases (sketched in Fig. \ref{three phase1}), subjecting to the hyperplane constraint $c_1+c_2+c_3=1$ for volume conservation and non-negative constraints $c_i\geqslant0$. However, such constraints bring difficulties in designing structure preserving numerical methods for the resulting model. In addition, it is not easy to construct energies in the classical ternary CBPF models due to the consistency constraints arising from physical considerations. For example, the algebraic consistency is defined to impose the coincidence of the ternary phase-field model with the two-phase model in the absence of one phase, preventing the model from having unphysical nucleation at the interface between any two phases. This property must be preserved under small perturbations, leading to dynamic consistency. In order to maintain these two consistencies, Boyer et al. \cite{boyer2006study} suggested that both the free energy and the mobility coefficients should be carefully constructed up to crucial mathematical constraints. 
In some case, the energy functional may not be bounded from below because of the negative capillary energy part. 
Significant challenges in both modeling and analysis remain in the classical ternary CBPF model.
\begin{figure}[htb]
   \centering
   \begin{subfigure}{0.35\linewidth}
   \centering
   \includegraphics[trim=0cm 0.3cm 0cm 0.2cm,clip,width=0.9\linewidth]{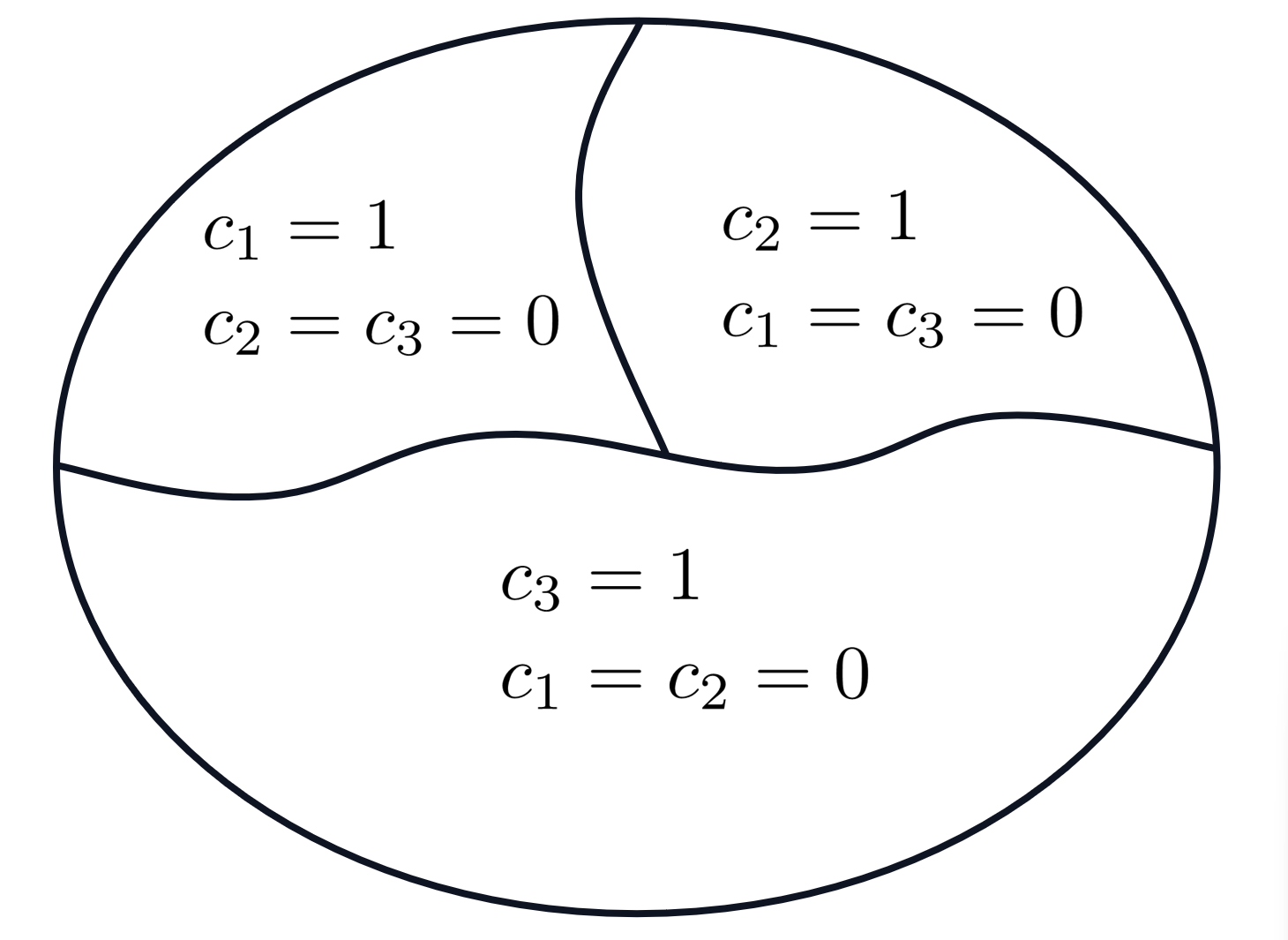}
   \subcaption{}\label{three phase1}
   \end{subfigure}\hspace{5mm}
   \begin{subfigure}{0.35\linewidth}
   \centering
   \includegraphics[trim=0cm 0.3cm 0cm 0.2cm,clip,width=0.9\linewidth]{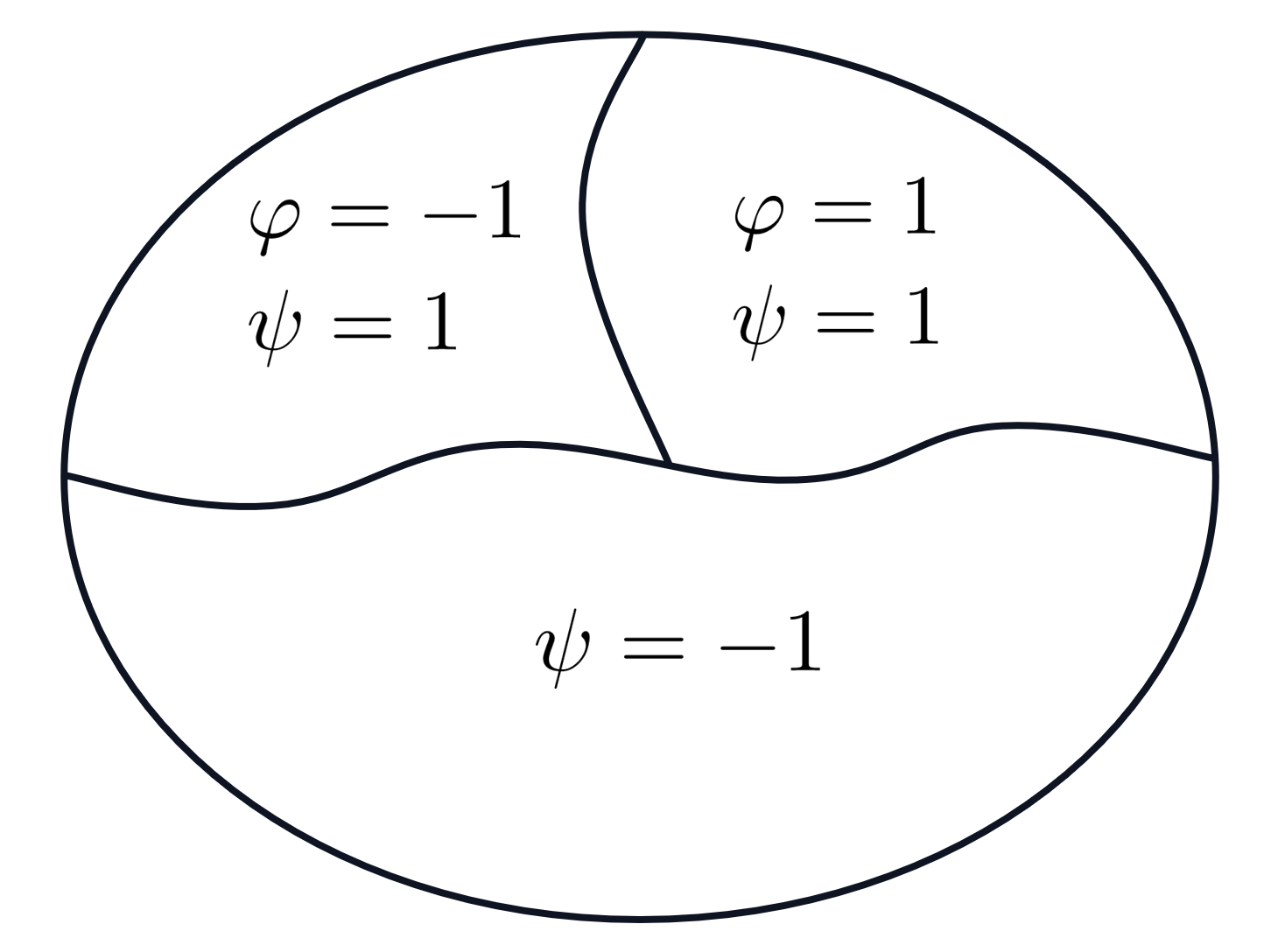}
   \subcaption{}\label{three phase2}
   \end{subfigure}
   \caption{(a) Configuration of a phase-field vector $(c_1,c_2,c_3)$ representing ternary phase. The bulks of the three phases in CBPF model are characterized by $\bar{\mathcal{B}}_1:(1,0,0)$, $\bar{\mathcal{B}}_2:(0,1,0)$ and $\bar{\mathcal{B}}_3:(0,0,1)$.  (b) Configuration of a phase-field vector $(\psi,\varphi)$ representing ternary phase. The bulks of the three phases in DBPF model are characterized by ${\mathcal{B}}_1:(-1,\cdot)$, ${\mathcal{B}}_2:(1,-1)$ and ${\mathcal{B}}_3:(1,1)$. }\label{three phase}
   \end{figure}
   
In \cite{Diffuseinterface,Analysisof}, an alternative dichotomy-based phase-field (DBPF) model for ternary system is proposed, offering both simplicity and effectiveness in capturing the complex interfacial structures between phases. The ternary DBPF model employs a pair of phase-field variables $(\psi, \varphi)$ to identify each phase from  the rest in a nested way (sketched in Fig. \ref{three phase2}). In contrast to the hyperplane constraint imposed in the CBPF model, the DBPF model automatically satisfies the volume conservation. 
Moreover, the energy of the ternary DBPF model is always bounded from below, and the algebraic and dynamic consistencies can be easily preserved in the DBPF model. These strengths potentially expand the scope of the DBPF model's applications in both analytical and numerical domains. 


To validate the phase-field models, sharp-interface analysis is usually conducted to study their consistency with existing sharp-interface models as well as physical requirements.
For binary phase-field models, their sharp interface limits have been well studied using both matched asymptotic analysis and $\Gamma$-convergence analysis \cite{dai2012motion, dai2014coarsening, chen2014analysis, cahn1996cahn, lee2016sharp, colli2017asymptotic, tang2022asymptotic}. For the ternary CBPF model, formal asymptotic analysis was employed to study its sharp-interface limit\cite{Onthreephase, Thedynamics}, in which the Neumann triangle condition at the triple junction was obtained. 
However, there is still a lack of sharp-interface analysis for the DBPF model.

In the aspect of numerical simulations, developing high-order and unconditionally energy-stable schemes for the ternary CBPF model is still challenging because of the strong nonlinearities and couplings in the system. To preserve the mathematical structures of such phase-field systems, tremendous numerical techniques for gradient flow systems have been developed recently \cite{ Unconditionallygradient, zhu1999coarsening, shen2010numerical, shen2018scalar, shen2019new, li2022stability, li2022stabilityand}, some of which are generalized to solve the CBPF model. These include the convex-concave splitting schemes \cite{boyer2011numerical}, IEQ approaches \cite{yang2017numerical}, SAV methods \cite{zhang2020decoupled}. The energy stable methods for simulating multiphase flows using the CBPF model are also widely investigated \cite{yang2021new, Zhang2016Phase, Wu2023Highly, Tan2023An, Yang2021Numerical, Yang2024Efficiently}. In contrast, there are fewer works on the numerical study for the DBPF model, which are either fully implicit schemes \cite{Diffuseinterface} or only first-order schemes \cite{zhao2017decoupled}. Developing high-order linear, decoupled, and energy-stable numerical schemes for the original energy is in high demand for the DBPF models.

This paper aims to generalize the DBPF models for general $N$-phase systems, and investigate their sharp-interface properties and numerical methods. By introducing $N\!-\!1$ completely independent phase-field functions and a degenerate energy, the novel DBPF model is derived systematically using the Onsager principle \cite{Reciprocal1931OnsagerI, Reciprocal1931OnsagerII}, guaranteeing the thermodynamic consistency of the proposed model. Compared with the classical CBPF model, the novel DBPF model shows both analytical and numerical advantages. In particular, it avoids the hyperplane constraint and ensures mechanic, energetic, algebraic, and dynamic consistency conditions without imposing restrictions on physical parameters. Moreover, we develop a framework for recursively constructing the free energies, so that the DBPF approach is easily extended to model systems with arbitrary numbers of components while preserving these four consistency conditions. The novel DBPF model shows good sharp-interface consistency with existing physical requirements such as the Neumann triangle condition. Using the mobility operator splitting (MOS) technique \cite{Lu2025Decoupled}, we easily construct a second-order, linear, and energy stable scheme for the DBPF model which decouples the numerical integration of the $N-1$ phase-field functions. Numerical results demonstrate that the DBPF model succeeds in several benchmark simulations, reproducing the formation of liquid lenses between two stratified fluids and the generation of double emulsions and Janus emulsions. These are in agreement with experimental observations, showing the potential of the DBPF model in capturing complex interfacial phenomena.

The rest of this paper is organized as follows. In Section \ref{Sec:N_phase_dynamics}, we present the new DBPF model for the $N$-phase system by using the Onsager principle. In particular, we propose a framework for constructing the free energy functional of $N$-phase systems satisfying the four consistency conditions. In Section \ref{Sec:Sharp_interface_limit}, using matched asymptotic expansions, we investigate the sharp interface limit of the ternary DBPF model and derive the Neumann triangle law. In Section \ref{Sec:Numerical_schemes}, by using a mobility operator splitting technique, we develop a linear and decoupled scheme for the $N$-phase DBPF model which preserves the dissipation of the original energy. In Section \ref{Sec:Numerical_simulations}, various numerical results are presented to validate the theoretical derivation. In particular, benchmark simulations for multiphase problems are also provided.

\section{Novel $N$-phase Cahn-Hilliard dynamics}\label{Sec:N_phase_dynamics}

\subsection{$N$-phase Cahn-Hilliard dynamics with dichotomic representation}
In a two-phase phase-field model, an order parameter (also named phase-field function) $\phi(\mathbf{x},t): \mathbb{R}^d\times\mathbb{R}\!\rightarrow\!\mathbb{R}$ is introduced to describe the distinct phases, that is,
   \begin{equation*}
   \begin{cases}
   \phi=-1 \quad &\text{in phase 1},\\  
   \phi=1 \quad &\text{in phase 2},
   \end{cases}
   \end{equation*}
and $\phi$ has a smooth transition between $\pm1$ within a thin region across the interface. The mixing energy of the two-phase system is given by the classical Ginzburg-Landau potential
   \begin{align}\label{eq:binary-energy}
   W(\phi)=\gamma\int_{\Omega}\Big(\frac{\varepsilon}{2}|\nabla\phi|^{2}+\frac{1}{4\varepsilon}(\phi^2-1)^2\Big)\mathrm{d}\mathbf{x},
   \end{align}
where $\varepsilon$ is the width of the interface,  $\gamma=\frac{3}{2\sqrt 2}\sigma_{12}$ is the surface tension coefficient, and $\Omega\!\in\!\mathbb{R}^d$. The first term of the mixing energy accounts for the hydrophilic interactions between the materials, while the second term models the hydrophobic interactions. The minimizer of this energy functional admits a (soft) binary partition of $\Omega$. In this work, we focus mainly on the two-/three-dimensional space with $d=2,3$.

By iteratively applying binary partitions to $\Omega$ and its subsets through such phase-field functions, we can obtain a $N$-partition of $\Omega$ with $N\!-\!1$ independent phase-field variables, which we call the dichotomic representation of multiphase systems. Mathematically, we can introduce a phase-field vector ${\mathbf{\Phi}}^{N}=(\phi_1,\ldots,\phi_{N-1})$ for the $N$-phase system, with each $\phi_i$ being the standard two-phase phase-field variable. Let $Q_{m}=[-1,1]^{m}$, $m\geqslant 1$ (with the convention that $Q_0=\emptyset$) be the $m$-cube. Define the projection operator along the $j$-th direction ($1\leqslant j\leqslant N\!-\!1$) in an $(N\!-\!1)$-cube as follows:
\begin{align*}
\Pi_{j,b}^{N-1}:\ & Q_{N-1}\rightarrow~~\quad Q_{j-1}~\times~\{b\}~\times~ Q_{N-1-j},
\\
&\mathbf{\Phi}^{N}\quad\mapsto(\phi_1,\ldots,\phi_{j-1},
b,\phi_{j+1},\ldots,\phi_{N-1}).
\end{align*}
For simplicity, we adopt the shorthand notation $\Pi_{i_1,b}^{N-1}\circ\Pi_{i_2,b}^{N-1}\circ\ldots\circ\Pi_{i_k,b}^{N-1}:=\Pi_{\{i_1,\ldots,i_k\},b}^{N-1}$ for the composition of multiple projections and $\Pi_{0,b}^{N-1}, \Pi_{N,b}^{N-1}=\emptyset$ for trivial projections. As summarized in Table \ref{N-phase}, the bulk of the $i$th-phase ($i=1\leqslant i\leqslant N$) can be mathematically represented as ${\mathbf{\Phi}}^{N}\in{\mathcal{B}}^i$ with
\begin{align}\label{bulk}
   {\mathcal{B}}^i:=
   \Pi_{\{1,\ldots,i-1\},1}^{N-1}\circ\Pi_{i,-1}^{N-1} Q_{N-1}, \quad 1\leqslant i\leqslant N.
\end{align}
Consequently, the interface between the $i$th-phase and the $k$th-phase ($1\leqslant i<k\leqslant N$) can be represented by ${\mathbf{\Phi}}^{N}\in\mathcal{I}^{ik}$ with 
\begin{align}\label{interface}
   {\mathcal{I}}^{ik}:=
   \Pi_{\{1,\ldots,k-1\}\setminus\{i\},1}^{N-1}\circ\Pi_{k,-1}^{N-1} Q_{N-1}, \quad 1\leqslant i<k\leqslant N.
\end{align}
\begin{figure}[htb]
   \begin{subfigure}{0.36\linewidth}
   \centering
   \includegraphics[trim=0cm 0.3cm 0.5cm 0.2cm,clip,width=0.9\linewidth]{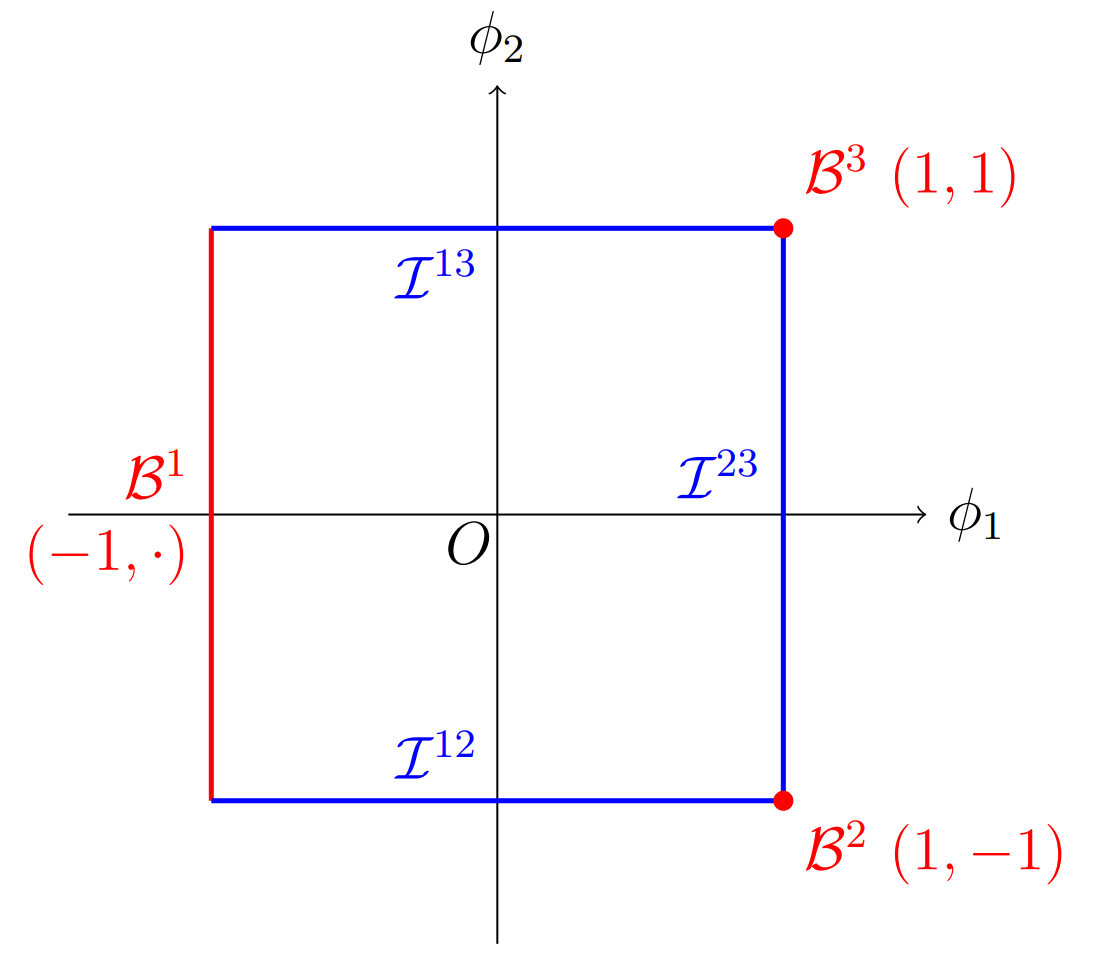}
   \subcaption{}\label{sketch_of_TTPM}
   \end{subfigure}
   \hfill
   \begin{tikzpicture}
   [baseline={(0,-2)}]
   \draw[->,thick] (-1,0) -- (2,0);
   \node[above] at (0.1,1.2) {$c_1=\frac{1-\phi_1}{2}$};
   \node[above] at (0.5,0.6) {$c_2=\frac{1+\phi_1}{2}\frac{1-\phi_2}{2}$};
   \node[above] at (0.5,0) {$c_3=\frac{1+\phi_1}{2}\frac{1+\phi_2}{2}$};  
   \end{tikzpicture}
   \hfill
   \begin{subfigure}{0.38\linewidth}
   \centering
   \includegraphics[trim=0cm 0.3cm 0cm 0cm,clip,width=0.9\linewidth]{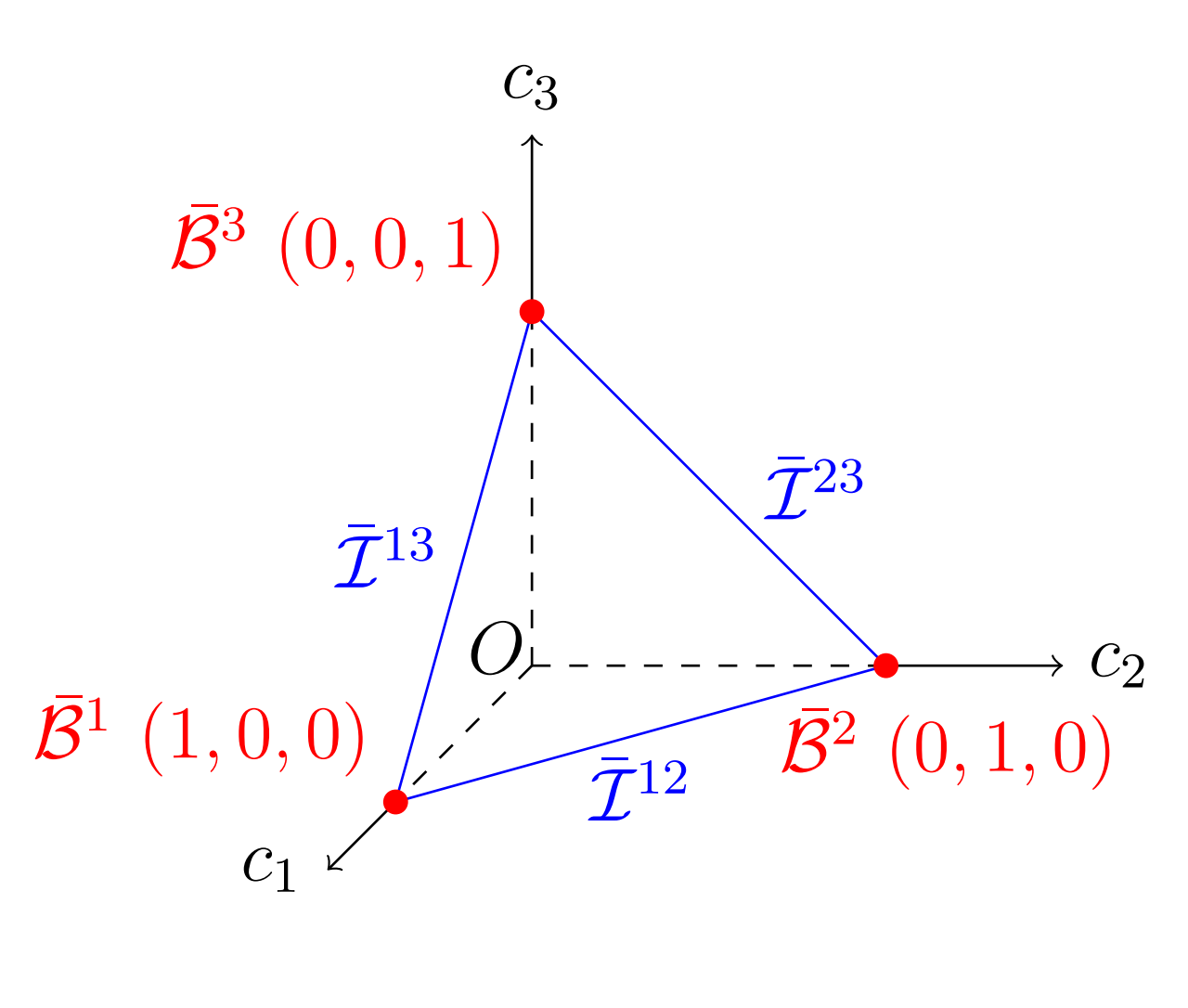}
   \subcaption{}\label{sketch_of_DTPM}
   \end{subfigure}
   \caption{Sketch of dichotomic representation (a) vs. characteristic representation (b) for ternary phases. ${\mathcal{B}}^i$ and $\bar{\mathcal{B}}^i$ $(i=1,2,3)$ represent the bulk phases (the red portion) while ${\mathcal{I}}^{ik}$ and $\bar{\mathcal{I}}^{ik}$ $(1\leqslant i<k\leqslant 3)$ represent the interfaces (the blue portion) in the respective representations. In this case, there is a nonlinear correspondence between the two sets of phase variables $(\phi_1,\phi_2)$and $(c_1,c_2,c_3)$.} \label{sketch_of_TTPM_and_DTPM}
   \end{figure}

\begin{table}[htb]
   \centering
   \caption{The phase-field variables $\phi_1, \phi_2, \cdots, \phi_{N-1}$, describing the N-phase problem.}
   \begin{tabular}{@{}>{\centering\arraybackslash}cccccccc}
   \toprule[1.5pt]
    & $\text{phase}\ 1$ & $\text{phase}\ 2$  & $\text{phase}\ 3$ & $\cdots$ & $\text{phase}\ N\!-\!2$ & $\text{phase}\ N\!-\!1$ & $\text{phase}\ N$\\
   \midrule[0.8pt]
   $\phi_1$ & -1 & 1 &  1 &  $\cdots$ & 1 & 1 & 1\\
   $\phi_2$ &  & -1 & 1 & $\cdots$ & 1 & 1 & 1\\
   $\phi_3$ &  &  & -1 & $\cdots$ & 1 & 1 & 1\\
   $\cdots$ &  &  &  & $\cdots$ & $\cdots$ & $\cdots$ & $\cdots$\\
   $\phi_{N-2}$ &  &  &  &  & -1 & 1 & 1\\
   $\phi_{N-1}$ &  &  &  &  &  & -1 & 1\\
   \bottomrule[1.5pt]
   \end{tabular}
   \label{N-phase}
   \end{table}
   
The two types of representations for ternary phase systems can be graphically
illustrated in Figure \ref{sketch_of_TTPM_and_DTPM}. 
The bulk phases are represented by the red nodes of a simplex ($\bar{\mathcal{B}}^i$, $i=1,2,3$) in CBPF (Figure \ref{sketch_of_TTPM_and_DTPM} (b)), while they are represented by the red nodes ($\mathcal{B}^i$, $i=2,3$) or the red edge ($\mathcal{B}^1$) of a 2-cube in DBPF (Figure \ref{sketch_of_TTPM_and_DTPM} (a)). The two representations are related by a nonlinear transform \cite{Analysisof}. Due to the degeneracy of the nonlinear transform (Figure \ref{sketch_of_TTPM_and_DTPM}), the edge $\mathcal{B}^1$ simply degenerates to the node $\bar{\mathcal{B}}^1$ when the 2-cube  degenerates to a simplex. The blue edges $\mathcal{I}^{ik}$ and $\bar{\mathcal{I}}^{ik} (1\leqslant i<k\leqslant 3)$ connecting the bulk phases in Figures \ref{sketch_of_TTPM_and_DTPM} (a) and (b) consist of all the interfaces. 

In analogy to \eqref{eq:binary-energy}, we could define the mixing free energy in an $N$-phase DBPF model:
   \begin{equation}\label{mixing energy of N-phase model}
   W^N({\mathbf{\Phi}}^{N})=
   \int_{\Omega}\sum_{i=1}^{N-1}\gamma_i^N
   \Big(\frac{\varepsilon_i}{2}|\nabla\phi_i|^{2}+\frac{1}{\varepsilon_i}F(\phi_i) \Big)\mathrm{d}\mathbf{x},
   \end{equation} 
where 
\begin{align}\label{gamma_i^N}
\gamma_i^N=\gamma_i^N({\mathbf{\Phi}}^{N})
:\mathbb{R}^{N-1}\rightarrow\mathbb{R},\ i=1,\ldots,N-1,
\end{align}
is the surface tension function on the corresponding interface, and $F(\phi_i)=\frac{1}{4}(\phi_i^2-1)^2$ is the Ginzburg-Landau potential. \eqref{mixing energy of N-phase model} can be regarded as an interpolation of $N\!-\!1$ binary-phase potentials with some interpolation functions that are well selected in \eqref{gamma_i^N}.

We shall apply the Onsager variational principle to derive the new $N$-phase dynamics induced by the free energy \eqref{mixing energy of N-phase model} and a proper dissipation functional (or equivalently the metric). In this work, we consider conservative dynamics, while nonconservative dynamics can be derived in a similar way. The energy dissipation functional, as half the rate of free energy dissipation, is given by
   \begin{align*}
   \Phi=\sum_{i=1}^{N-1}\int_{\Omega}\frac{|\mathbf{J}_{\phi_i}|^2}{2M_i}\mathrm{d}\mathbf{x},
   \end{align*}
where $M_i=M_i({\mathbf{\Phi}}^{N}) (i=1,2,\cdots,N\!-\!1)$ is the mobility parameter, and $\mathbf{J}_{\phi_i}$ is the flux of $\phi_i$ that satisfies the continuity equation in $\Omega$
   \begin{align}\label{continuity equations}
   \dot{\phi_i}+\nabla\cdot\mathbf{J}_{\phi_i}=0, \quad i=1,2,\ldots,N-1,
   \end{align}
supplemented with boundary conditions
   \begin{align}\label{BC of J}
   \mathbf{J}_{\phi_i}\cdot\mathbf{n}=0,\quad \mbox{on}\;\;\partial\Omega.
   \end{align}
In \eqref{continuity equations} and hereafter, we shall use the shorthand notation $\dot{(\bullet)}=\partial_t(\bullet)$ for the clarity and simplicity of our derivation. 
Now we are ready to derive the governing dynamics. The Onsager principle states that dissipative dynamics can be obtained by minimizing the Rayleighian 
   \begin{align*}
   R=\Phi+\dot{W}^N,
   \end{align*}
with respect to all rate variables
$\{\mathbf{J}_{\phi_1},\cdots,\mathbf{J}_{{\phi}_{N-1}};\dot{\phi}_1,\cdots,\dot{\phi}_{N-1}\}$ under the continuity constraints \eqref{continuity equations}, that is,
   \begin{align}\label{minimizing the Rayleighian}
   \begin{cases}
   \underset{\{\mathbf{J}_{\phi_1},\cdots,\mathbf{J}_{{\phi}_{N-1}};
   \dot{\phi}_1,\cdots,\dot{\phi}_{N-1}\}}{\min}\hspace{1mm}\Phi+\dot{W}^N,\\
   \rm{s.t.}
   \begin{cases}
   \dot{\phi}_i+\nabla\cdot\mathbf{J}_{\phi_i}=0\quad  &\text{in}~\Omega, \ i=1,2,\cdots N-1,\\
   \mathbf{J}_{\phi_i}\cdot\mathbf{n}=0, \quad &\text{on}~\partial\Omega, \ i=1,2,\cdots N-1.
   \end{cases}
   \end{cases}
   \end{align} 
Using integration by parts and the constraints \eqref{continuity equations}--\eqref{BC of J}, we have
   \begin{align*}
   &~\dot{W}^N(\mathbf{J}_{\phi_1},\cdots,\mathbf{J}_{\phi_{N-1}};\dot{\phi}_1,\cdots,\dot{\phi}_{N-1})\\
   =&\int_{\Omega}\sum_{i=1}^{N-1}\bigg(\gamma_i^N\Big({\varepsilon_i}\nabla\phi_i\cdot\nabla\dot{\phi}_i+\frac{1}{\varepsilon_i}f(\phi_i)\dot{\phi}_i
   \Big)+\sum_{j=1}^{N-1}\frac{\partial \gamma_i^N}{\partial \phi_j}\dot{\phi}_j\Big(\frac{\varepsilon_i}{2}|\nabla\phi_i|^2+\frac{1}{\varepsilon_i}F(\phi_i) \Big)\bigg)\mathrm{d}\mathbf{x}\\
   =&\int_{\Omega}\sum_{i=1}^{N-1}\bigg(-\varepsilon_i\nabla\cdot\big(\gamma_i^N\nabla\phi_i\big)+\frac{1}{\varepsilon_i}\gamma_i^N f(\phi_i)+\displaystyle\sum_{j=1}^{N-1}\frac{\partial \gamma_j^N}{\partial \phi_i}\Big(\frac{\varepsilon_j}{2}|\nabla\phi_j|^2+\frac{1}{\varepsilon_j}F(\phi_j) \Big)\bigg)\dot{\phi}_i\mathrm{d}\mathbf{x}\\
   &+\int_{\partial\Omega}\sum_{i=1}^{N-1}\gamma_i^N{\varepsilon_i}(\partial_n\phi_i)\dot{\phi}_i\mathrm{d}S\\
   =&\int_{\Omega}\sum_{i=1}^{N-1}\nabla\mu_{\phi_i}\cdot\mathbf{J}_{\phi_i}
   \mathrm{d}\mathbf{x}+\int_{\partial\Omega}\sum_{i=1}^{N-1}\gamma_i^N{\varepsilon_i}(\partial_n\phi_i)\dot{\phi}_i\mathrm{d}S,
   \end{align*}
where $f(\phi_i)=F'(\phi_i)$ and
   \begin{align*}
   \mu_{\phi_i}=\frac{\delta W^N}{\delta \phi_i}=-\varepsilon_i\nabla\cdot\big(\gamma_i^N\nabla\phi_i\big)+\frac{1}{\varepsilon_i}\gamma_i^Nf(\phi_i)+\displaystyle\sum_{j=1}^{N-1}\frac{\partial \gamma_j^N}{\partial \phi_i}\Big(\frac{\varepsilon_j}{2}|\nabla\phi_j|^2+\frac{1}{\varepsilon_j}F(\phi_j) \Big).
   \end{align*}
 Then the Euler-Lagrange equation of \eqref{minimizing the Rayleighian} with respect to $\mathbf{J}_{\phi_i}$ leads to the following constitutive relations
   \begin{align*}
   \mathbf{J}_{\phi_i}=-M_i\nabla\mu_{\phi_i}\quad i=1,2,\cdots,N-1.
   \end{align*}
Putting these constitutive relations into the continuity equations \eqref{continuity equations}, we arrive at the $N$-phase Cahn-Hilliard type dynamics:
   \begin{align}\label{N-phase Cahn-Hilliard system}
   \begin{cases}
   \dfrac{\partial \phi_i}{\partial t}=
   \nabla\cdot\big(M_i\nabla\mu_{\phi_i}\big),\\
   \mu_{\phi_i}=-\varepsilon_i\nabla\cdot\big(\gamma_i^N\nabla\phi_i\big)+\frac{1}{\varepsilon_i}\gamma_i^Nf(\phi_i)+\displaystyle\sum_{j=1}^{N-1}\frac{\partial \gamma_j^N}{\partial \phi_i}\Big(\frac{\varepsilon_j}{2}|\nabla\phi_j|^2+\frac{1}{\varepsilon_j}F(\phi_j) \Big).
   \end{cases}
   \end{align}
These equations are associated with the boundary conditions 
   \begin{align}\label{BC1}
   \partial_{n}\phi_i=0,\qquad \partial_{n}\mu_{\phi_i}=0 \qquad \text{on}\ \partial\Omega, 
   \end{align}
which can be obtained by considering the optimality condition of \eqref{minimizing the Rayleighian} with respect to $\dot{\phi}_i$ on $\partial\Omega$ and placing the constitutive relation of $\mathbf{J}_{\phi_i}$ into \eqref{BC of J}, respectively.
As a direct consequence of our derivation, the following energy law can be established for the system \eqref{N-phase Cahn-Hilliard system}:
   \begin{equation*}
   \frac{\mathrm{d}}{\mathrm{d} t}W^N({\mathbf{\Phi}}^{N})
   =-\int_{\Omega}\sum_{i=1}^{N-1}M_i|\nabla\mu_{\phi_i}|^2\mathrm{d}\mathbf{x}\leqslant0.
   \end{equation*}

\begin{remark}\label{choose mobility}
   In numerical practice, it is found that the choice of constant mobilities for all $M_i$ may lead to loss of volume conservation for some phase. This is attributed to the unexpected interface motion of some $\phi_i$ in the bulk phase with $\phi_j=-1 (j<i)$ and the deviation of $\phi_i$ from the $\tanh$ profile. This loss of volume conservation can be alleviated when the thickness parameter $\varepsilon$ decreases, as can be illustrated by the asymptotic analysis. Nevertheless, in this work we adopt a choice of degenerate mobilities to restrict such unexpected interface motions in numerical simulations:
   \begin{align}\label{mobility}
   M_i=
   \begin{cases}
    m_1, & i=1,\\
    m_i\displaystyle\prod_{j=1}^{i-1}
    \Big(\frac{1+\phi_j}{2}\Big)^{2a_j}, & i=2,3,\ldots,N-1, 
   \end{cases}
   \end{align}
where $m_i$, $i=1,2,\ldots,N\!-\!1$, is a positive constant and $a_j$, $j=1,2,\ldots,N\!-\!2$, is a positive integer. As will be reflected from our numerical results, these degenerate mobilities effectively preserve volume conservations.
\end{remark}


\subsection{Mechanic, energetic, algebraic and dynamic consistency}
For the $N$-phase model \eqref{N-phase Cahn-Hilliard system} to correctly capture the multiphase dynamics, it is crucial that its free energy functional should satisfy the following consistency properties: namely, the mechanic, the energetic, the algebraic and the dynamic consistencies \cite{boyer2006study, Analysisof}.  It should be noted that an incorrect choice of the surface tension coefficient $\gamma_i^N$ will lead to an unphysical dynamics of the system. For simplicity, we can assume that $\gamma_i^N$ is independent of $\phi_i$.

\vspace{2mm}
\begin{enumerate}
\item\label{Mechanic consistency}
  $\textbf{Mechanic consistency}$: The construction of $\gamma_i^N$ should recover the surface tension coefficient $\sigma_{ij}$ upon integration of the free energy density across the corresponding interface. Moreover, $\gamma_i^N$ should vanish in the bulk of each phase $i$. 
 In summary,
\begin{align}\label{condition i1}
   \gamma_i^N({\mathbf{\Phi}}^{N})
   =
   \begin{cases}
   \frac{3}{2\sqrt 2}\sigma_{ik} ,\quad &\text{if}\ {\mathbf{\Phi}}^{N}\in\mathcal{I}^{ik},\ 1\leqslant i<k\leqslant N,\\
   0, \quad
   &\text{if}\ {\mathbf{\Phi}}^{N}\in\displaystyle\bigcup_{n=1}^{i-1}\mathcal{B}^n,\    2\leqslant i\leqslant N-1,
   \end{cases}
\end{align}
where $\mathcal{I}^{ik}$ and $\mathcal{B}^n$  are defined in \eqref{bulk} and \eqref{interface}. 

\vspace{2mm}
\item\label{Energetic consistency}
\textbf{Energetic consistency}: The energy of the $N$-phase model should exactly coincide with the corresponding $(N\!-\!1)$-phase model when some phase is absent. Denote 
the $(N\!-\!1)$-phase vector in the absence of the $j$th-phase by $\mathbf{\Phi}^{N}_{(-j)}=(\phi_1,\ldots,\phi_{j-1},
\phi_{j+1},\ldots,\phi_{N-1})$, and the parameter sets of surface tension coefficients by
$\mathbf{\Sigma}^{N,i}=(\sigma_{i,i+1},\ldots\sigma_{i,N})$,
$\mathbf{\Sigma}_{(-j)}^{N,i}=(\sigma_{i,i+1},\ldots,\sigma_{i,j-1},
\sigma_{i,j+1},\ldots\sigma_{iN})$, $i<j$,  and $\mathbf{\Sigma}_{(-j)}^{N,i}=\mathbf{\Sigma}^{N,i}$, $i>j$. For convenience, we introduce $\phi_0, \phi_N, \sigma_{i0}=\emptyset$.  Then the energetic consistency can be formulated as
\begin{align}\label{condition iii 1}
   \begin{cases}
   W^N|_{{\mathbf{\Phi}}^{N}\in\Pi_{j,1}^{N-1}Q_{N-1}}
   =W^{N-1}(\mathbf{\Phi}_{(-j)}^{N};\bigcup_{i\neq j}\mathbf{\Sigma}_{(-j)}^{N,i}),\ 1\leqslant j\leqslant N-1,\\
   W^N|_{{\mathbf{\Phi}}^{N}\in\Pi_{N-1,-1}^{N-1}Q_{N-1}}=W^{N-1}
   (\mathbf{\Phi}^{N-1};\bigcup_{i<N-1}\mathbf{\Sigma}^{N-1,i}),
   \end{cases}
\end{align}
where ${\mathbf{\Phi}}^{N}\in\Pi_{j,1}^{N-1}Q_{N-1}$ and ${\mathbf{\Phi}}^{N}\in\Pi_{N-1,-1}^{N-1}Q_{N-1}$ indicate the absences of the $j$th-phase $(1\leqslant j\leqslant N\!-\!1)$ and the $N$th-phase respectively, and the parameter sets are explicitly specified to emphasize the dependence of the energy densities on the corresponding surface tension coefficients.
Hereafter, we introduce the projection on the specific coordinate $(j, b)\in I_a=\{(1,1),\ldots,(N\!-\!1,1)\}\cup\{(N\!-\!1,-1)\}$ to represent the absence of some phase.

\vspace{2mm}
\item\label{Algebraic consistency}
\textbf{Algebraic consistency}: If $\phi_j(x,y,0)\equiv1$, $j=1,\ldots,N\!-\!1$ or $\phi_{N-1}(x,y,0)\equiv-1$,  then we have $\phi_j(x,y,t)\equiv1$ or $\phi_{N-1}(x,y,t)\equiv-1$, $\forall t\geqslant0$, respectively. This can be achieved by setting $\mu_{\phi_j}|_{{\mathbf{\Phi}}^{N}\in\Pi_{j,b}^{N-1}Q_{N-1}}=0$, $(j,b)\in I_a$. By using the second equation of \eqref{N-phase Cahn-Hilliard system}, we have
\begin{align}\label{condition iii 2}
   \frac{\partial \gamma_i^N}{\partial \phi_j}\big|_{{\mathbf{\Phi}}^{N}\in\Pi_{j,b}^{N-1}Q_{N-1}}=0, \quad (j,b)\in I_a, \ 1\leqslant i\leqslant N-1, \ i\neq j,
\end{align}
This means that if we fix $\{\phi_1,\ldots,\phi_{N-1}\}\setminus \{\phi_j\}$, then $\phi_j=1$, $1\leqslant j\leqslant N-2$, and $\phi_j=\pm 1$, $j=N-1$ are the critical points of $\gamma_i^N$, $i=1,\ldots,N\!-\!1$.

\vspace{2mm}
\item\label{Dynamic consistency}
\textbf{Dynamic consistency}: The $N$-phase model is dynamically consistent in the sense that the solutions $ {\mathbf{\Phi}}^{N}\in\Pi_{j,1}^{N-1}Q_{N-1}$, $j=1,\ldots,N\!-\!1$ and $ {\mathbf{\Phi}}^{N}\in\Pi_{N-1,-1}^{N-1}Q_{N-1}$ of the $N$-phase model are stable under small perturbations of the initial data.  To ensure the dynamical consistency, the critical points in \eqref{condition iii 2} need to be minima, i.e.,
\begin{align}\label{condition iv}
   \frac{\partial^2 \gamma_i^N}{\partial \phi_j^2}\big|_{{\mathbf{\Phi}}^{N}\in\Pi_{j,b}^{N-1}Q_{N-1}}>0,\quad (j,b)\in I_a, \ 1\leqslant i\leqslant N-1, \ i\neq j.
\end{align}
\end{enumerate}

\vspace{2mm}
Now, we provide an inductive approach to construct $\gamma_i^N$ that satisfies the consistency conditions \eqref{Mechanic consistency}-\eqref{Dynamic consistency}. The energetic consistency \eqref{condition iii 1} gives a relation between $\{\gamma_i^N\}$ and $\{\gamma_i^{N-1}\}$, which serves as the basis for the inductive inference from $\{\gamma_i^{N-1}\}$ to $\{\gamma_i^N\}$. To facilitate this construction, we need the following lemma for the construction of a $(N\!-\!1)$-dimensional smooth function $f(\mathbf{Z})$ from its boundary projections, where $\mathbf{Z}=(z_1,\ldots,z_{N-1})\in Q_{N-1}$ is a vector-valued variable. We will still adopt the notation $\mathbf{Z}_{(-i)}$ for the set of $(N-2)$-dimensional variables without the $i$th variable $z_i$.
\begin{lemma}\label{construction of f}
Assume that $h_i(\mathbf{Z}_{(-i)})\;(i=1,\ldots,N\!-\!1)$ and $h_N(\mathbf{Z}_{(-(N-1))})$ are $(N-2)$-dimensional smooth functions satisfying the following conditions:
\begin{enumerate}
\item
Continuity condition:
\begin{align}\label{continuity condition}
h_i|_{z_j=1}=h_j|_{z_i=1},\quad h_i|_{z_{N-1}=-1}=h_N|_{z_i=1}
\end{align}
with $1\leqslant i\neq j\leqslant N\!-\!1$.  
 
\item First-order derivative condition:
\begin{align}\label{1st derivative condition of h_i}
\frac{\partial h_i}{\partial z_j}\Big|_{z_j=1}=\frac{\partial h_i}{\partial z_{N-1}}\Big|_{z_{N-1}=-1}=\frac{\partial h_N}{\partial z_i}\Big|_{z_i=1}=0,
\end{align}
with $1\leqslant i\neq j\leqslant N\!-\!1$. 
\end{enumerate}
Then there exists a smooth function $f$ satisfying the boundary constraints \begin{align}\label{boundary of f}
   \begin{cases}
   f(\Pi_{i,1}^{N-1}\mathbf{Z})=h_i,\ 1\leqslant i\leqslant N-1,
   \\
   f(\Pi_{N-1,-1}^{N-1}\mathbf{Z})=h_N,
   \end{cases}
\end{align}
and the first-order derivative conditions
\begin{align}\label{1st_derivative}
   \frac{\partial}{\partial z_j}f(\mathbf{Z})\Big|_{\mathbf{Z}\in\Pi_{j,b}^{N-1}Q_{N-1}}=0,\qquad (j,b)\in I_a.
\end{align}
 
\end{lemma}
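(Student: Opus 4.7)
The plan is a constructive proof via univariate cubic Hermite blending combined with a Boolean-sum (transfinite) interpolation. First, I introduce two cubic polynomials $\beta_+,\beta_-\in C^\infty([-1,1])$ uniquely determined by the Hermite data $\beta_\pm(\pm 1)=1$, $\beta_\pm(\mp 1)=0$, $\beta_\pm'(\pm 1)=\beta_\pm'(\mp 1)=0$ (explicitly, $\beta_+(z)=(2+3z-z^{3})/4$ and $\beta_-(z)=\beta_+(-z)$). These encode both an endpoint value and a zero-normal-derivative condition in one univariate blend.

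Next, I introduce a family of commuting idempotent projectors acting on smooth functions on $Q_{N-1}$. For $i=1,\ldots,N-2$ only the face $\{z_i=1\}$ carries prescribed data, so I take the trivial constant-in-$z_i$ extension $P_i[g]:=g|_{z_i=1}$; in direction $i=N-1$ both faces carry data, so I take the Hermite blend
\[
P_{N-1}[g]:=\beta_+(z_{N-1})\,g|_{z_{N-1}=1}+\beta_-(z_{N-1})\,g|_{z_{N-1}=-1}.
\]
Restrictions in different coordinates commute, so the $P_i$ commute and are clearly idempotent. I then define
\[
f\;:=\;\bigoplus_{i=1}^{N-1}P_i[\mathbf{h}]\;=\;\sum_{k=1}^{N-1}(-1)^{k+1}\!\!\!\sum_{1\le i_1<\cdots<i_k\le N-1}\!\!\!P_{i_1}P_{i_2}\cdots P_{i_k}[\mathbf{h}],
\]
where each composite $P_{i_1}\cdots P_{i_k}[\mathbf{h}]$ is evaluated by iterated restriction onto the corresponding codimension-$k$ face and substitution of the prescribed data $h_i$. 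The continuity condition \eqref{continuity condition} is precisely what makes these iterated restrictions order-independent (higher-order compatibilities follow from the pairwise ones by repeated restriction), so only the boundary data enters the definition of $f$.

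Finally, I verify the two conclusions. For the boundary conditions \eqref{boundary of f}, restricting $f$ to a face $\Pi_{j,b}^{N-1}Q_{N-1}$ renders $P_j$ the identity on that face (since $\beta_\pm(\pm 1)=1$ and trivial restriction is idempotent), whereupon a standard inclusion-exclusion telescoping collapses the Boolean sum to the single trace $h_j$ (or $h_N$ when $(j,b)=(N-1,-1)$). For the derivative condition \eqref{1st_derivative}, I differentiate each summand of the Boolean sum in $z_j$ at $z_j=b$: the derivative either hits a factor $\beta_\pm(z_j)$ and vanishes by $\beta_\pm'(\pm 1)=0$, or hits a restricted datum $h_i$ in the $z_j$-direction and vanishes by \eqref{1st derivative condition of h_i}. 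The main obstacle is the combinatorial bookkeeping in these two checks: each of the $2^{N-1}-1$ Boolean-sum terms must be accounted for at every boundary face and in every coordinate direction, and one must argue systematically that the pairwise continuity and derivative hypotheses propagate through all iterated restrictions. Once this bookkeeping is performed, both properties of $f$ follow in parallel via the same combinatorial structure.
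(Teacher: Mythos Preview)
Your construction is correct and is, at bottom, the same transfinite (Boolean--sum) interpolation that the paper carries out, but the two presentations differ in two useful ways. First, the paper never names the Boolean sum: it writes the interpolant explicitly as
\[
f=\sum_{j=1}^{N}a_jh_j+\sum_{j=1}^{N-2}(-1)^{j}e_j,\qquad a_j=\beta_+(z_j)\ (j\le N-1),\ a_N=\beta_-(z_{N-1}),
\]
and then verifies \eqref{boundary of f} and \eqref{1st_derivative} by a bare-hands inclusion--exclusion computation (introducing residuals $R_j$ and showing they telescope). Your projector formulation makes this telescoping automatic: once $P_j$ acts as the identity on the face $z_j=b$, the factor $(I-P_j)$ kills $\prod_i(I-P_i)$ there, and the boundary value drops out in one line. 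Second, in the directions $j\le N-2$ you take the trivial constant-in-$z_j$ projector $P_j[g]=g|_{z_j=1}$, whereas the paper weights the same face value by the Hermite cubic $a_j=\beta_+(z_j)$. Both are idempotent and both reproduce the data at $z_j=1$; for \eqref{1st_derivative} your summands with $j\in S$ are literally constant in $z_j$ (derivative zero trivially), while the paper's pick up a factor $a_j'(1)=0$. Either choice works. What your abstraction buys is a cleaner verification that scales to arbitrary $N$ without the index chase; what the paper's explicit formula buys is a closed form that plugs directly into the downstream construction of $\gamma_i^N$ in Appendix~\ref{Construction of gamma_i^N} without unpacking any operator algebra. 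One small imprecision in your outline: your stated dichotomy ``hit a $\beta_\pm(z_j)$ factor or hit a restricted datum'' does not literally cover the case $j\le N-2$, $j\in S$, where there is no $\beta$ factor and the summand is simply independent of $z_j$---but that case is of course even easier.
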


\begin{proof}
    The detailed proof is presented in \ref{proof of Lemma}.
\end{proof}

This lemma basically states that the successful reconstruction of a smooth function from its boundary projections relies on the continuities of these projections at their intersections, which may occur at points, lines, surfaces, etc. A direct consequence of this lemma is the construction of a parameter-dependent smooth function from its boundary projections with at most two distinct functions.

\begin{corollary}\label{corollary 1}
Let $\mathbf{X}=(z_1,\ldots,z_{N_x})$,  $\mathbf{Y}=(z_{N_x+1},\ldots,z_{N_x+N_y})$ and $\mathbf{Z}=(\mathbf{X},\mathbf{Y})\in Q_{N_x}\times
Q_{N_y}$ with $N_x, N_y\geqslant0$ and $N_x+N_y=N\!-\!1$. Assume that $u(\mathbf{X} _{(-i)},\mathbf{Y};\mathbf{\Theta}) \ (1\leqslant i\leqslant N_x)$,  $v(\mathbf{X},\mathbf{Y}_{(-j)};\mathbf{\Theta}_{(-j)})\ (1\leqslant j\leqslant N_y)$ and $v(\mathbf{X},\mathbf{Y}_{(-N_y)};\mathbf{\Theta}_{(-(N_y+1))})$
are $(N-2)$-dimensional smooth functions satisfying the similar continuity condition \eqref{continuity condition} and first-order derivative condition \eqref{1st derivative condition of h_i}, where $\mathbf{\Theta}\in\mathbf{R}^{N_y+1}$  is the set of parameters. Then there exists a smooth function $f$ satisfying the boundary constraints
\begin{align}\label{boundary of f uv}
   \begin{cases}
   f(\Pi_{i,1}^{N_x}\mathbf{X},\mathbf{Y};\mathbf{\Theta})=
   u(\mathbf{X}_{(-i)},\mathbf{Y};\mathbf{\Theta}),\ 1\leqslant i\leqslant N_x,
   \\
   f(\mathbf{X} ,\Pi_{j,1}^{N_y}\mathbf{Y};\mathbf{\Theta})=
   v(\mathbf{X},\mathbf{Y}_{(-j)};\mathbf{\Theta}_{(-j)}),\ 1\leqslant j\leqslant N_y,
   \\
   f(\mathbf{X} ,\Pi_{N_y,-1}^{N_y}\mathbf{Y};\mathbf{\Theta})=
   v(\mathbf{X},\mathbf{Y}_{(-N_y)};\mathbf{\Theta}_{(-(N_y+1))}),
   \end{cases}
\end{align}
and the first-order derivative conditions
\begin{align}\label{1st derivative of f}
   \frac{\partial}{\partial z_j}f(\mathbf{Z};\mathbf{\Theta})\Big|_{\mathbf{Z}\in\Pi_{j,b}^{N-1}Q_{N-1}}=0,
\end{align} 
with $(j,b)\in I_a$. 
\end{corollary}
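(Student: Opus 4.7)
The plan is to reduce Corollary~\ref{corollary 1} directly to Lemma~\ref{construction of f} by treating $\mathbf{\Theta}$ as an external parameter. For each fixed $\mathbf{\Theta}\in\mathbb{R}^{N_y+1}$, I package the boundary data supplied by $u$ and $v$ into the boundary functions $\{h_i\}_{i=1}^{N}$ required by Lemma~\ref{construction of f} through the assignments
\[
h_i=u(\mathbf{X}_{(-i)},\mathbf{Y};\mathbf{\Theta}),\quad 1\leqslant i\leqslant N_x,
\]
\[
h_{N_x+j}=v(\mathbf{X},\mathbf{Y}_{(-j)};\mathbf{\Theta}_{(-j)}),\quad 1\leqslant j\leqslant N_y,
\]
\[
h_N=v(\mathbf{X},\mathbf{Y}_{(-N_y)};\mathbf{\Theta}_{(-(N_y+1))}).
\]
Under this identification the boundary constraints \eqref{boundary of f uv} become exactly those in \eqref{boundary of f}, and the first-derivative conditions \eqref{1st derivative of f} become exactly those in \eqref{1st_derivative}.

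Second, I would verify that the chosen $\{h_i\}$ satisfies the continuity condition \eqref{continuity condition} and first-order derivative condition \eqref{1st derivative condition of h_i} of Lemma~\ref{construction of f}. Three cases arise. When both indices lie in the $\mathbf{X}$-block (both $\leqslant N_x$), equality follows from the internal consistency of $u$ with itself; when both lie in the $\mathbf{Y}$-block (both $>N_x$, possibly including the face $z_{N-1}=-1$), it follows from the internal consistency of $v$ with itself, with the parameter reduction $\mathbf{\Theta}_{(-j)}$ playing no role since it is unchanged across those faces. The non-trivial case is the cross-block one, which requires that a restriction of $u$ (with the full parameter $\mathbf{\Theta}$) equal a restriction of $v$ (with reduced parameter $\mathbf{\Theta}_{(-j)}$) on the shared corner face $\{z_i=1,\,z_{N_x+j}=1\}$; this is precisely the ``similar continuity condition'' assumed in the hypothesis, and it implicitly forces the $j$-th component of $\mathbf{\Theta}$ to drop out of $u$ on that face. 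The first-derivative condition is checked by the analogous case analysis.

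Once all hypotheses are verified pointwise in $\mathbf{\Theta}$, an application of Lemma~\ref{construction of f} produces a smooth function $f(\mathbf{Z};\mathbf{\Theta})$ fulfilling \eqref{boundary of f uv} and \eqref{1st derivative of f}. Smoothness in $\mathbf{\Theta}$ is inherited from the interpolation-type construction carried out in the proof of Lemma~\ref{construction of f}: since $u$ and $v$ are smooth in $\mathbf{\Theta}$, so are the $h_i$, and the construction commutes with the parameter dependence.

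The main obstacle will be notational bookkeeping rather than substantive analysis. The corollary's cross-compatibility between $u$ and $v$ is phrased only as a ``similar'' continuity condition, yet it couples an index relabeling ($N_x+j\mapsto j$) with a parameter reduction ($\mathbf{\Theta}\mapsto\mathbf{\Theta}_{(-j)}$ on $\{z_{N_x+j}=1\}$ and $\mathbf{\Theta}\mapsto\mathbf{\Theta}_{(-(N_y+1))}$ on $\{z_{N-1}=-1\}$). Writing out the analogues of \eqref{continuity condition} and \eqref{1st derivative condition of h_i} carefully in this parameterized setting is tedious, but introduces no new conceptual difficulties beyond those already handled in Lemma~\ref{construction of f}.
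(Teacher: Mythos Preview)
Your proposal is correct and matches the paper's approach: the paper does not give a separate proof of Corollary~\ref{corollary 1} but simply states it as ``a direct consequence of this lemma'' (Lemma~\ref{construction of f}), which is exactly the reduction you carry out by packaging $u$ and $v$ into the $h_i$'s and treating $\mathbf{\Theta}$ as an external parameter. Your additional remark that smoothness in $\mathbf{\Theta}$ is inherited from the explicit interpolation formula in the proof of Lemma~\ref{construction of f} is a welcome clarification the paper leaves implicit.
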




With the help of Corollary \ref{corollary 1}, we can inductively construct $\gamma_i^N$ such that the $N$-phase model \eqref{N-phase Cahn-Hilliard system} satisfies energetic consistency \eqref{Energetic consistency} and algebraic consistency \eqref{Algebraic consistency}. Specifically, the inductive relation between $\{\gamma_i^N\}$ and $\{\gamma_i^{N-1}\}$ when the $j$th phase is absent is summarized as follows: 
\begin{center}
\begin{tikzpicture}
    \node at (-2,1.25) {$N$-phase model:}; 
    \node (A) at (0,1.3) {$\gamma_1^N$};
    \node (dots1) at (1,1.3) {$\ldots$};
    \node (B) at (2,1.3) {$\gamma_{j-1}^N$};
    \node (C) at (3,1.3) {$\gamma_j^N$};
    \node (D) at (4,1.3) {$\gamma_{j+1}^N$};
    \node (dots2) at (5,1.3) {$\ldots$};
    \node (E) at (6,1.3) {$\gamma_{N-1}^N$};
    
    \node at (-2.4,0) {$(N\!-\!1)$-phase model:}; 
    \node (F) at (0,0) {$\gamma_1^{N-1}$};
    \node (dots3) at (1,0) {$\ldots$};
    \node (G) at (2,0) {$\gamma_{j-1}^{N-1}$};
    \node (H) at (3,0) {$\gamma_j^{N-1}$};
    \node (dots4) at (4,0) {$\ldots$};
    \node (I) at (5,0) {$\gamma_{N-2}^{N-1}$};
    
    \draw[->] (A) -- (F);
    \draw[->] (B) -- (G);
    \draw[->] (D) -- (H);
    \draw[->] (E) -- (I);
\end{tikzpicture}
\end{center}
Mathematically, this relation is expressed as
\begin{align*}
   \gamma_i^N(\mathbf{\Phi}^{N}\in\Pi_{j,1}^{N-1}Q_{N-1};
   \mathbf{\Sigma}^{N,i})
   =
   \begin{cases}
   \gamma_{i-1}^{N-1}(\mathbf{\Phi}_{(-j)}^{N};
   \mathbf{\Sigma}^{N,i}), &1\leqslant j<i\leqslant N\!-\!1,\\
   \gamma_{i}^{N-1}(\mathbf{\Phi}_{(-j)}^{N};
   \mathbf{\Sigma}_{(-j)}^{N,i}), &1\leqslant i<j\leqslant N\!-\!1, 
   \end{cases}
\end{align*}
and 
   \begin{align*}
   \gamma_i^N(\mathbf{\Phi}^{N}\in\Pi_{N-1,-1}^{N-1}Q_{N-1};
   \mathbf{\Sigma}^{N,i})
   =&\gamma_i^{N-1}(\mathbf{\Phi}^{N-1}
   ;\mathbf{\Sigma}^{N-1,i}), \quad 1\leqslant i\leqslant N-2. 
   \end{align*}
By Corollary \ref{corollary 1}, we can construct $\gamma_i^N$ from $\gamma_{i-1}^{N-1}$ and $\gamma_i^{N-1}$ (with the possible choice of $N_x=0$ or $N_y=0$). 
The details 
are provided in \ref{Construction of gamma_i^N}.

\begin{example}
For the ternary-phase model with $N=3$, we assume that $\gamma_i^3 (i=1,2)$ is independent of $\phi_i$.  Let $h_2^1=\frac{3}{2\sqrt2}\sigma_{13}$, $h_3^1=\frac{3}{2\sqrt2}\sigma_{12}$ and $h_1^2=\frac{3}{2\sqrt2}\sigma_{23}$. Then, from \eqref{formula of gamma_i^N} in \ref{Construction of gamma_i^N}, we have 
\begin{align*}
   \gamma_1^3(\phi_2)=
   &~a_2h_2^1+a_3h_3^1
   +c_1^3(\phi_2)
   \\
   =&\frac{3}{2\sqrt{2}}\bigg( 
   \sigma_{13}\Big(\frac{1+\phi_2}{2}\Big)^2(2-\phi_2)
   +\sigma_{12}\Big(\frac{1-\phi_2}{2}\Big)^2(2+\phi_2)\bigg)
   +\Lambda(1-\phi_2^2)^2.
   \end{align*}
To determine $\Lambda$, \eqref{condition of c} is used to give a constraint
\begin{align}\label{the condition of d_1^3}
   \frac{3}{2\sqrt2}\big(\pm\frac{3}{2}
   (\sigma_{12}-\sigma_{13})\big)+8\Lambda>0.
\end{align}
The choice of  
\begin{align}
   \Lambda=
   \frac{3}{2\sqrt2}\Big(\frac{\alpha}{16}
   |\sigma_{12}-\sigma_{13}|\Big), \quad \alpha=3+\varepsilon, 
\end{align}
leads to the ternary-phase model in \cite{Analysisof}. Specifically,
\begin{align}\label{formula of gamma_1^3}
   \gamma_1^3(\phi_2)
   =&\frac{3}{2\sqrt{2}}\bigg( \sigma_{12}\Big(\frac{1-\phi_2}{2}\Big)^2(2+\phi_2)+
   \sigma_{13}\Big(\frac{1+\phi_2}{2}\Big)^2(2-\phi_2)\bigg)
   \notag\\
   &+\frac{3}{2\sqrt{2}}\Big(\frac{\alpha}{16}|\sigma_{12}-\sigma_{13}|(1-\phi_2^2)^2\Big),
\end{align}
and 
\begin{align}\label{formula of gamma_2^3}
   \gamma_2^3(\phi_1)
   =&~\frac{3}{2\sqrt{2}}\sigma_{23} \Big(\frac{1+\phi_1}{2}\Big)^2(2-\phi_1)
   +\frac{3}{2\sqrt{2}}\Big(\frac{\alpha}{16}\sigma_{23}(1-\phi_1^2)^2\Big).
\end{align}
\end{example}

\section{Sharp interface limit}\label{Sec:Sharp_interface_limit}
The formation of junctions at the intersections of multiple phases, such as moving contact lines, makes the $N$-phase problems much more complicated than the two-phase problems, which involve only interfaces. Among those junctions, 
triple junctions are most commonly observed in $N$-phase patterns and are usually stable. 
In this section, we will analyze the sharp interface limit of the ternary DBPF model, especially the behavior near the triple junction. Using the matched asymptotic analysis, the Neumann
triangle condition at the triple junction.

\subsection{Ternary DBPF model}
For simplicity and clarity of presentation, we omit the superscript 3 in $W^3$, $\gamma_1^3$, and $\gamma_2^3$, and introduce $\psi$ and $\varphi$ instead of $\phi_1$ and $\phi_2$, respectively, for the phase variables, i.e. 
    \begin{equation*}
   \begin{cases}
   \psi=-1,
   &\text{in phase 1},\\
   \psi=1,\varphi=-1 &\text{in phase 2},\\  
   \psi=1,\varphi=1 &\text{in phase 3},
   \end{cases}
   \end{equation*}
Without loss of generality, we let $\varepsilon_1=\varepsilon_2=\varepsilon$. Then, the mixing free energy \eqref{mixing energy of N-phase model} reduces to
   \begin{equation}\label{eq:ternary_energy}
   W(\psi,\varphi)=
   \int_{\Omega}\gamma_1(\varphi)\Big(\frac{\varepsilon}{2}|\nabla\psi|^{2}+\frac{1}{\varepsilon}F(\psi) \Big)+\gamma_2(\psi)\Big(\frac{\varepsilon}{2}|\nabla\varphi|^{2}+\frac{1}{\varepsilon}F(\varphi) \Big)\mathrm{d}\mathbf{x}.
   \end{equation}
where $\gamma_1$ and $\gamma_2$ are defined in \eqref{formula of gamma_1^3} and \eqref{formula of gamma_2^3}, respectively, and $F(\bullet)=\frac{1}{4}(\bullet^2-1)^2$. Using \eqref{N-phase Cahn-Hilliard system} and \eqref{BC1}, the ternary DBPF model reads
\begin{align}
   \psi_{t}&=\nabla\cdot(M_1\nabla\mu_{\psi}),\label{CH11}\\
   \mu_{\psi}&=-\varepsilon\nabla\cdot\big(\gamma_1(\varphi)\nabla\psi\big)+\frac{1}{\varepsilon}\gamma_1(\varphi)f(\psi)+\gamma_2'(\psi)\Big(\frac{\varepsilon}{2}|\nabla\varphi|^2+\frac{1}{\varepsilon}F(\varphi) \Big),\label{CH22}\\
   \varphi_{t}&=\nabla\cdot\big(M_2(\psi)\nabla\mu_{\varphi}\big),\label{CH33}\\
   \mu_{\varphi}&=-\varepsilon\nabla\cdot\big(\gamma_2(\psi)\nabla\varphi\big)+\frac{1}{\varepsilon}\gamma_2(\psi)f(\varphi)+\gamma_1'(\varphi)\Big(\frac{\varepsilon}{2}|\nabla\psi|^2+\frac{1}{\varepsilon}F(\psi) \Big),\label{CH44}
   \end{align}
where $M_1$ and $M_2(\psi)$ are defined in \eqref{mobility}, and $f=F'$.
The boundary conditions are given by
   \begin{align}
   \label{ternary_BC}
   \partial_{n}\mu_{\psi}=0, \qquad 
   \partial_{n}\varphi=0,\qquad \partial_{n}\mu_{\varphi}=0,\qquad\partial_{n}\psi=0   \qquad \text{on}\ \partial\Omega.
   \end{align}
The system \eqref{CH11}--\eqref{ternary_BC}, yields the following energy dissipation law:
   \begin{equation*}
   \frac{\mathrm{d}}{\mathrm{d} t}W(\psi,\varphi)=-\int_{\Omega}M_1|\nabla\mu_\psi|^2\mathrm{d}\mathbf{x}-\int_{\Omega}M_2(\psi)|\nabla\mu_\varphi|^2\mathrm{d}\mathbf{x}\leqslant0.
   \end{equation*}
       
   
   
To perform asymptotic expansions, we separated the domain into the outer region (away from $\Gamma_i$, $i=1, 2, 3$, shown by the black line in Fig. \ref{three region}), the inner region (around the interface but away from the triple junction) and the triple junction region. Suppose that the three-phase interfaces are defined through the zero level sets of $\varphi$ and $\psi$: $\Gamma_1=\{\mathbf{x}\in\Omega:\varphi=0,~ \psi>0\}$, $\Gamma_2=\{\mathbf{x}\in\Omega:\varphi>0,~ \psi=0\}$, and $\Gamma_3=\{\mathbf{x}\in\Omega:\varphi<0,~ \psi=0\}$, where $\Gamma_1$, $\Gamma_2$ and $\Gamma_3$ are smooth closed surfaces and divide the domain $\Omega$ into three regions: $\Omega_1=\{\mathbf{x}\in\Omega: \psi<0\}$, $\Omega_2=\{\mathbf{x}\in\Omega: \varphi<0,\psi>0\}$ and $\Omega_3=\{\mathbf{x}\in\Omega: \varphi>0,\psi>0\}$. 
At this stage, we perform a two-step matching procedure: First, match the asymptotic solutions between the inner region and outer region; then, match the solutions between the inner region and triple junction region. 

   \begin{figure}[htb]
   \centering
   \includegraphics[trim=0cm 0.1cm 0cm 0.5cm,clip,width=8.6cm]{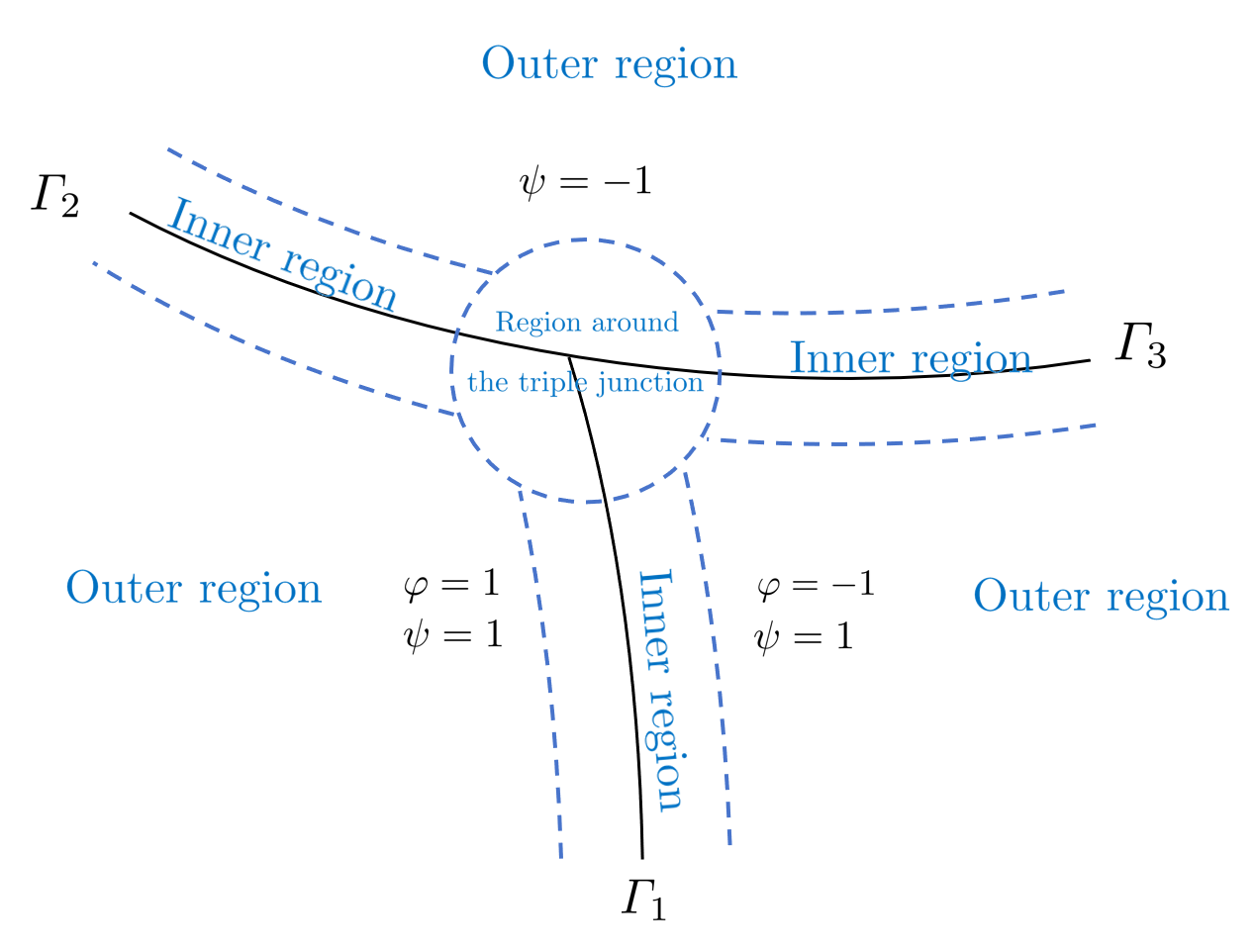}
   \caption{The sketch of the outer region, inner region and  the triple junction region.}\label{three region}
   \end{figure}

\subsection{Analysis in the outer and inner regions}
\vspace{4mm}
\noindent\textbf{Outer variables.}
Away from the interfaces, we consider the outer functions $\psi$, $\mu_{\psi}$, $\varphi$, and $\mu_{\varphi}$ associated with the outer variable $\mathbf{x}$.  The outer expansion for any given outer function $w(\mathbf{x},t)$ (e.g. $\psi$, $\mu_{\psi}$, $\varphi$, $\mu_{\varphi}$) is given as follows
   \begin{align*}
   w(\mathbf{x},t)&=w_{0}(\mathbf{x},t)+\varepsilon w_{1}(\mathbf{x},t)+{\varepsilon^2}w_{2}(\mathbf{x},t)+\cdots.
   \end{align*}
   
\vspace{4mm}
\noindent\textbf{Inner variables.}
For the region around $\Gamma_i$ ($i=1,2,3$) but away from the triple junction, we first introduce an inner variable $z$ as
   \begin{align*}
   z=\frac{d(\mathbf{x},t)}{\varepsilon},
   \end{align*}
where $d(\mathbf{x},t)$ is the signed distance function of the point $\mathbf{x}$ to $\Gamma_i$ 
and $|\nabla{d}|=1$. For any function $w(\mathbf{x},t)$ (e.g.  $\psi$, $\mu_{\psi}$, $\varphi$, $\mu_{\varphi}$), we can rewrite it in terms of inner variable:
   \begin{align*}
    w(\mathbf{x},t)={\tilde w}\Big(\frac{d(\mathbf{x},t)}{\varepsilon},\mathbf{x},t\Big),
   \end{align*}
Following \cite{Frontmigration}, it will be convenient to require that the quantity $\tilde{w}$ does not vary when $\mathbf{x}$ varies in the normal direction to $\Gamma_i$ but $z$ holds, that is, $\tilde{w}(z,\mathbf{x}+\alpha\nabla d,t)=\tilde{w}(z,\mathbf{x},t)$ for small $\alpha$ or $\nabla d(\mathbf{x},t)\cdot\nabla_{\mathbf{x}}{\tilde{w}}=0$. Define $\mathbf{m}=\nabla d(\mathbf{x},t)$ and $\kappa=\Delta d(\mathbf{x},t)=\nabla\cdot\mathbf{m}$. Then, we have 
   \begin{align}\label{inner_derivatives}
   \begin{cases}
   \nabla w=\nabla_{\mathbf{x}}\tilde{w}+\varepsilon^{-1}\mathbf{m}\partial_{z}\tilde{w},\\
   \Delta w=\Delta_\mathbf{x}{\tilde{w}}+\varepsilon^{-1}\kappa\partial_{z}\tilde{w}+\varepsilon^{-2}\partial_{zz}\tilde{w},\\
   \partial_{t}w=\partial_{t}\tilde{w}+\varepsilon^{-1}{d_t}\partial_{z}\tilde{w},
   \end{cases}
   \end{align}
where $\nabla_{\mathbf{x}}$ and $\Delta_{\mathbf{x}}$ represent the gradient and Laplace operator with respect to $\mathbf{x}$, respectively. 
Substituting \eqref{inner_derivatives} into \eqref{CH11}--\eqref{CH44}, we can obtain the inner system.  
For any inner function $\tilde{w}(z,{\mathbf{x}},t)$ (e.g. $\tilde{\psi}$, $\tilde{\mu}_{\psi}$, $\tilde{\varphi}$, $\tilde{\mu}_{\varphi}$), we seek the inner expansions as
   \begin{align*}
   \tilde{w}(z,{\mathbf{x}},t)&={\tilde{w}}_{0}(z,{\mathbf{x}},t)+\varepsilon\tilde{w}_{1}(z,{\mathbf{x}},t)+{\varepsilon^2}\tilde{w}_{2}(z,{\mathbf{x}},t)+\cdots.
   \end{align*}

\vspace{4mm}
\noindent\textbf{Matching conditions between outer and inner expansions.}
For the outer and inner functions associated with the phase-field variables, one requires that 
   \begin{align}\label{matching condition 1 between outer and inner expansions}
   \tilde{\varphi}_0(z,{\mathbf{x}},t)=
   \begin{cases}
   \varphi_0^{\pm}({\mathbf{x}},t), \quad &z\to\pm\infty,\\
   \big(\varphi_0^{+}({\mathbf{x}},t)+\varphi_0^{-}({\mathbf{x}},t)\big)/2,\quad &z\to 0,
   \end{cases} 
   \end{align}
For further use, we develop the matching conditions for the outer and inner expansions of the chemical potential \cite{Caginalp1988Dynamics}. Fixing ${\mathbf{x}}$ on $\Gamma_i$, we seek to match the expansions by requiring that
   \begin{align*}
    (\mu_{\varphi_0}+\varepsilon\mu_{\varphi_1}+\mathcal{O}(\varepsilon^2))|_{({\mathbf{x}}+\varepsilon z\mathbf{m},t)}=(\tilde{\mu}_{\varphi_0}+\varepsilon\tilde{\mu}_{\varphi_1}+\mathcal{O}(\varepsilon^2))|_{(z,{\mathbf{x}},t)},
   \end{align*}
where $\mathbf{m}$ is the unit normal vector to $\Gamma_i$ and $\varepsilon z$ is between $O(\varepsilon)$ and $o(1)$. Expanding the left-hand side in the powers of $\varepsilon$ as $\varepsilon z\to0^+$, we have
   \begin{align}\label{these expansions}
   \mu_{\varphi_0}^{+}+\varepsilon(\mu_{\varphi_1}^{+}+zD_{\mathbf{m}}\mu_{\varphi_0}^{+})+
   \mathcal{O}(\varepsilon^2),
   \end{align}
where $D_{{\mathbf{m}}}$ denotes the directional derivative along $\mathbf{m}$ and
   \begin{align*}
   \mu_{\varphi_i}^{+}({\mathbf{x}},t)=\lim_{z\to0^{+}}\mu_{\varphi_i}({\mathbf{x}}+z{\mathbf{m}},t).
   \end{align*}
Similar results are obtained as $\varepsilon z\to0^-$. To match these expansions in \eqref{these expansions} with the inner expansion, one requires
   \begin{align}
   \mu_{\varphi_0}^{\pm}({\mathbf{x}},t)=&~\tilde{\mu}_{\varphi_0}(z,{\mathbf{x}},t),\quad z\to\pm\infty,\label{matching condition of chemical potential  1}\\
   (\mu_{\varphi_1}^{\pm}+zD_{\mathbf{m}}\mu_{\varphi_0}^{\pm})({\mathbf{x}},t)=&~\tilde{\mu}_{\varphi_1}(z,{\mathbf{x}},t),\quad z\to\pm\infty.\label{matching condition of chemical potential 2}
   \end{align}
Similarly, we can obtain the matching conditions between the outer and inner expansions associated with $\psi$. 

Now we can perform a matched asymptotic analysis for the expansions between the outer region and the inner region. It is worth noting that in the inner region associated with one phase-field function but away from the triple junction, the other phase-field function can be regarded as constant in its outer region. 
As a result, the asymptotic analysis  the standard two-phase model can be employed. We will illustrate the matching process by considering the matching of the outer and inner regions near $\Gamma_1$. Other matching cases near the interfaces (but away from the triple junction) can be done in a similar manner.

\vspace{4mm}
\noindent\textbf{Leading order.} For the outer regions on both sides of $\Gamma_1$, ${\psi}({\mathbf{x}},t)=\tilde{\psi}(z,{\mathbf{x}},t)=1$, which implies $\psi_0=1$ in $\Omega_2\cup\Omega_3$. At order $\mathcal{O}(\varepsilon^{-1})$,
we can derive  $\varphi_0=-1$ in $\Omega_2$, $\varphi_0=1$ in $\Omega_3$.  At order $\mathcal{O}(1)$,  we have $\Delta{\mu_{\varphi}}_0=0$ in $\Omega_2\cup\Omega_3$. Collecting the expansion results for the outer regions on both sides of $\Gamma_2$ and $\Gamma_3$, we have 
\begin{align}
   \begin{cases}\label{outer_results}
   \psi_0=-1 \quad &\text{in}\ \Omega_1,\\
   \psi_0=1,~~ \varphi_0=-1 \quad &\text{in}\ \Omega_2,\\
   \psi_0=1,~~ \varphi_0=1~~  \quad &\text{in}\ \Omega_3,\\
   \end{cases}
   \end{align}
and
   \begin{align}\label{eq:mu_psi0_and_mu_phi0}
   \begin{cases}
   \Delta{{\mu_\psi}_0}=0 \quad &\text{in}\ \Omega_1\cup\Omega_2\cup\Omega_3,\\
   \Delta{{\mu_\varphi}_0}=0 \quad &\text{in}\ \Omega_2\cup\Omega_3,
   \end{cases}
   \end{align}

For the inner expansions around  $\Gamma_1$, at order $\mathcal{O}(\varepsilon^{-2})$,  we have
   \begin{align}\label{independnet of z}
   \tilde{\mu}_{\varphi_0}(z,{\mathbf{x}},t)\equiv\mu_{\varphi_0}({\mathbf{x}},t).
   \end{align}
This follows from the matching condition \eqref{matching condition of chemical potential  1}.  At order $\mathcal{O}(\varepsilon^{-1})$, we have
   \begin{align}\label{Integrating obtain}
   \big(\partial_z\tilde{\varphi}_0\big)^2=2F(\tilde{\varphi}_0).
   \end{align}
Furthermore, using the condition \eqref{matching condition 1 between outer and inner expansions} and \eqref{outer_results}, we obtain    $\lim_{z\to\pm\infty}\tilde{\varphi}_0(z)=\pm1$.
Hence,
   \begin{align}\label{tanh_function}
   \tilde{\varphi}_0(z)=\tanh\big(\frac{z}{\sqrt{2}} \big).
   \end{align}
Similarly, around $\Gamma_2$ and $\Gamma_3$, we have
    \begin{align}\label{tanh_function_psi}
   \tilde{\psi}_0(z)=\tanh\big(\frac{z}{\sqrt{2}} \big).
   \end{align}
\vspace{4mm} 
\noindent\textbf{First order.} 
For the inner system across $\Gamma_1$,  at order $\mathcal{O}(\varepsilon^{-1})$, we have 
\begin{align*}
   d_t=\frac{1}{2}[{\mathbf{m}}\cdot\nabla\mu_{\varphi_0}]_{-}^{+},
   \end{align*}
where $[\bullet]$ denotes the jump from the $-$ side to the $+$ side,
and \eqref{matching condition 1 between outer and inner expansions}, \eqref{matching condition of chemical potential 2} and \eqref{independnet of z} have been used.

At order $\mathcal{O}(1)$, multiplying both sides by $\partial_z\tilde{\varphi}_0$ and integrating  the result with respect to $z$, we obtain
   \begin{align*}
   \tilde{\mu}_{\varphi_0}\tilde{\varphi}_0|_{-\infty}^{+\infty}+\kappa(x,t)\int_{-\infty}^{+\infty}\gamma_2(1)(\partial_z\tilde{\varphi}_0)^2 \mathrm{d}z=0.
   \end{align*}
Since  \eqref{tanh_function} implies 
   \begin{align*}
   \int_{-\infty}^{+\infty}\gamma_2(1)(\partial_z\tilde{\varphi}_0)^2 \mathrm{d}z
   =\sigma_{23},
   \end{align*}
this together with \eqref{independnet of z} and the matching condition \eqref{matching condition 1 between outer and inner expansions} leads to
   \begin{align}\label{mu_phi_gamma1}
   {\mu}_{\varphi_0}=-\frac{1}{2}\sigma_{23}\kappa_{23}(x,t), \quad \text{on}\ \Gamma_1,
   \end{align}
where $\kappa_{23}=\kappa$ is the curvature. Thus

Similarly, we have
   \begin{align}
   {\mu}_{\psi_0}=-\frac{1}{2}\sigma_{13}\kappa_{13}(x,t), \quad \text{on}\ \Gamma_2,\label{mu_phi_gamma2}\\
   {\mu}_{\psi_0}=-\frac{1}{2}\sigma_{12}\kappa_{12}(x,t), \quad \text{on}\ \Gamma_3,\label{mu_phi_gamma3}
   \end{align}
with
   \begin{align}\label{sigma_{12} and sigma_{13}}
   \int_{-\infty}^{+\infty}\gamma_1(1)(\partial_z\tilde{\psi}_0)^2 \mathrm{d}z=\sigma_{13},\qquad
   \int_{-\infty}^{+\infty}\gamma_1(-1)(\partial_z\tilde{\psi}_0)^2 \mathrm{d}z=\sigma_{12}.
   \end{align}
Therefore, the chemical potentials ${\mu}_{\psi_0}$ and ${\mu}_{\varphi_0}$ are determined by the Laplace equations \eqref{eq:mu_psi0_and_mu_phi0}, associated with the Dirichlet boundary conditions \eqref{mu_phi_gamma1}--\eqref{mu_phi_gamma3} on $\Gamma_i$ and the homogeneous Neumann conditions $\partial_{\bm{n}}{\mu_{\psi_0}}=\partial_{\bm{n}}{\mu_{\varphi_0}}=0$ on $\partial\Omega$. In general, ${\mu}_{\psi_0}$ and ${\mu}_{\varphi_0}$ are not constant.

\subsection{Analysis in the triple junction region and inner region}
In this subsection, we study the asymptotic behavior of the phase-field functions around the triple junction. Due to the finite interface thickness, there is a ``mixture'' region around the triple junction where the two phase-field functions vary between $-1$ and $1$. For our convenience of sharp-interface analysis, we shall define a physical triple junction point (line) by the intersection of the three asymptotes of the interfaces $\Gamma_i~(i=1,2,3)$ in the outer region as $\varepsilon\to0$
(shown in Fig. \ref{Sketch of the interfaces}), as one looks closer into the triple junction region from outside. 
The apparent contact angles $\theta_{23}$, $\theta_{12}$, $\theta_{13}$ are defined by the angles between the asymptotes.
\begin{figure}[htb]
   \centering
   \begin{subfigure}{0.48\linewidth}
   \centering
   \includegraphics[trim=0cm 0.5cm 0cm 0.3cm,clip,width=1\linewidth]{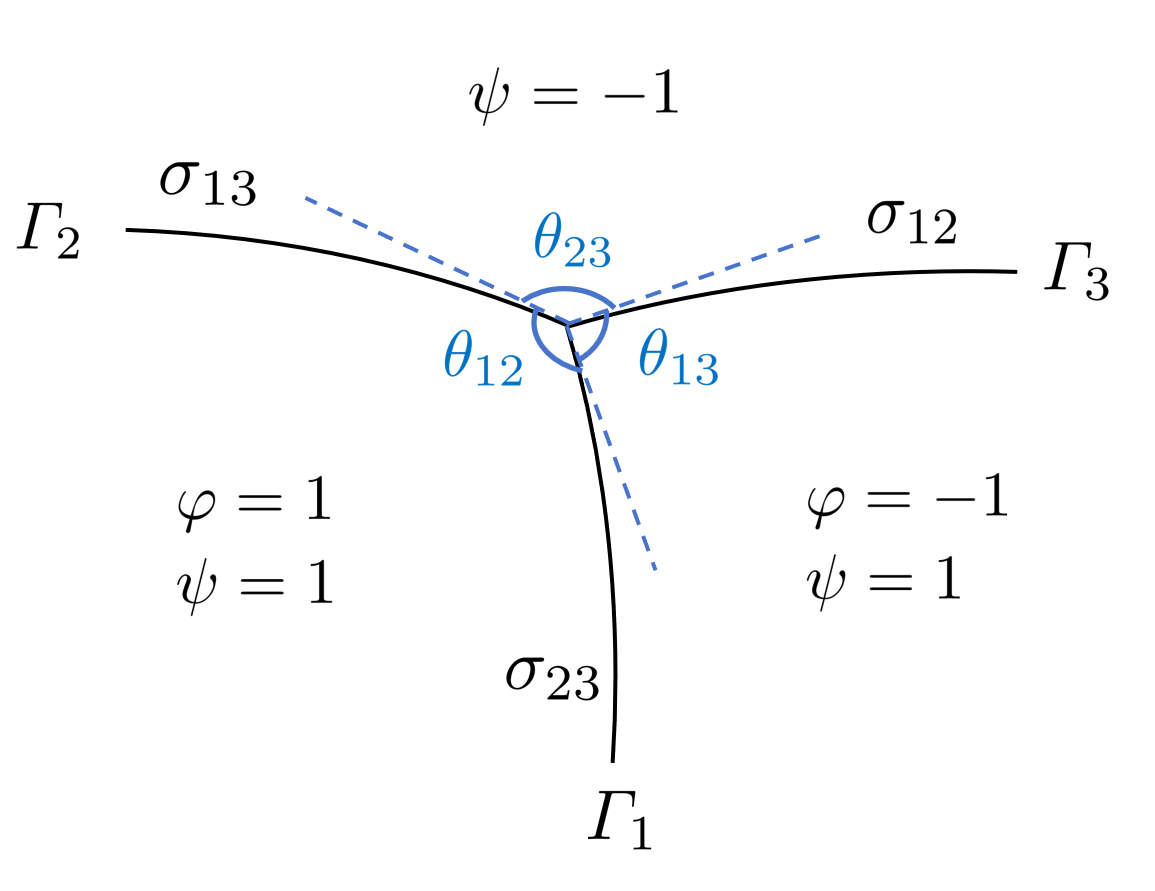}
   \end{subfigure}
   \begin{subfigure}{0.45\linewidth}
   \centering
   \includegraphics[trim=0cm 0.5cm 0cm 0.6cm,clip,width=1\linewidth]{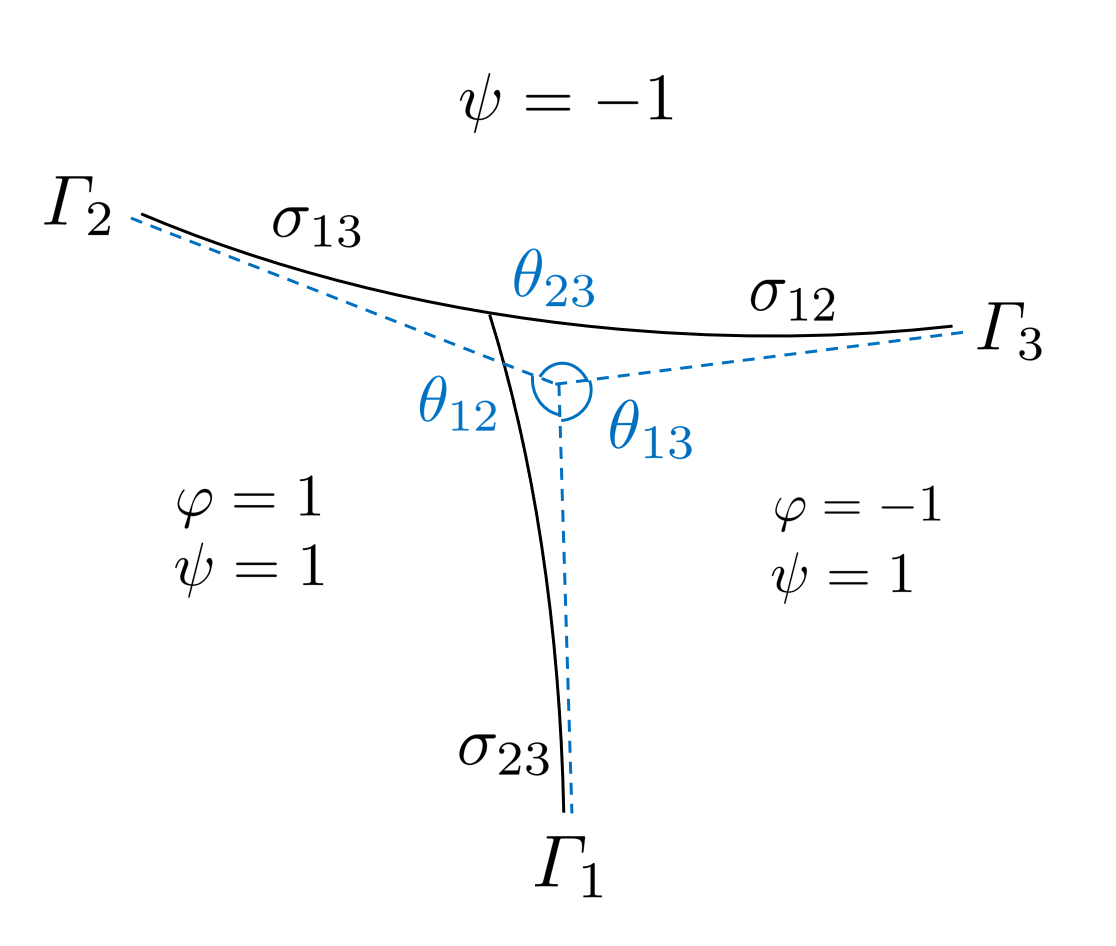}
   \end{subfigure}
   \caption{Sketch of the interfaces and apparent contact angles in the outer region (left panel: macroscopic view) and triple junction region (right panel: microscopic view). In both plots, $\theta_{23}$, $\theta_{12}$, and $\theta_{13}$ represent the apparent contact angles at the triple junction.}\label{Sketch of the interfaces}
   \end{figure}

   \vspace{4mm}
\noindent\textbf{Inner equations in the triple junction region.}
We focus on a two-dimensional plane normal to the triple junction line in the triple junction region. On such a plane, let $\mathbf{r}(t)$ be the coordinates of the triple junction point and introduce a stretched coordinate
   \begin{align*} 
   \bm{\eta}=\frac{\mathbf{x}-\mathbf{r}(t)}{\varepsilon}. 
   \end{align*}
For any function
$w$ (e.g. $\psi$, $\mu_{\psi}$, $\varphi$, $\mu_{\varphi}$) in the triple junction region, we can rewrite it as:
   \begin{align*}
   w(\mathbf{x},t)={\hat w}\Big(\frac{\mathbf{x}-\mathbf{r}(t)}{\varepsilon},t\Big).
   \end{align*} 
Then, we have
\begin{align}\label{triple_junction_derivatives}
    \begin{cases}
   \nabla w=\varepsilon^{-1}\nabla_{\bm\eta}\hat{w},\\
   \Delta w=\varepsilon^{-2}\Delta_{\bm\eta}\hat{w},\\
   \partial_{t}w=\partial_{t}\hat{w}-\varepsilon^{-1}r'(t)\nabla_{\bm\eta}\hat{w}.
   \end{cases}
   \end{align}
where $\nabla_{\bm{\eta}}$ and $\Delta_{\bm{\eta}}$ represent the gradient and Laplace operators with respect to $\bm{\eta}$. Substituting \eqref{triple_junction_derivatives} into \eqref{CH11}--\eqref{CH44} and using the asymptotic expansion for $\hat{w}(\bm\eta,t)$,
\begin{align*}
	\hat{w}(\bm\eta,t)&={\hat{w}}_{0}(\bm\eta,t)+\varepsilon\hat{w}_{1}(\bm\eta,t)+{\varepsilon^2}\hat{w}_{2}(\bm\eta,t)+\cdots,
\end{align*} 
we can obtain the systems at each order in the triple junction region.

\vspace{4mm}
\noindent\textbf{Matching conditions between the triple junction region and inner region.} 
It is more convenient to study the asymptotic behaviors of the underlying variables in the triple junction region by constructing an auxiliary triangle $T$, whose three sides $P_3P_2$, $P_1P_3$ and $P_2P_1$ are perpendicular to the three asymptotes of the interfaces $\Gamma_i~(i=1,2,3)$ in the outer region as $\varepsilon\to0$, as shown in Fig. \ref{Constructed triangle}. 
The circumcenter of $T$ exactly locates the triple junction point $\mathbf{r}(t)$. For our convenience of matching around $\Gamma_i$ ($i=1,2,3$), we employ three orthogonal coordinates $\bm\eta_i=\xi_i{\bm{\omega}}_i+\zeta_i{\bm{\tau}}_i$.
where ${\bm{\tau}}_i$ is in the tangent direction along the asymptote of the interface $\Gamma_i$ and ${\bm{\omega}}_i$ is perpendicular to ${\bm{\tau}}_i$. 
Then, we have the following matching condition:
        \begin{align}
        \lim_{\zeta_1\to+\infty}\hat{\varphi}_0(\xi_1,\zeta_1,t)&=\tilde{\varphi}_0(\xi_1,t),
        &&\lim_{\zeta_1\to+\infty}\hat{\psi}_0(\xi_1,\zeta_1,t)=1,\label{matching condition 1}\\
        \lim_{\zeta_2\to+\infty}\hat{\varphi}_0(\xi_2,\zeta_2,t)&=1,
        &&\lim_{\zeta_2\to+\infty}\hat{\psi}_0(\xi_2,\zeta_2,t)=\tilde{\psi}_0(\xi_2,t),\label{matching condition 2}
        \\
        \lim_{\zeta_3\to+\infty}\hat{\varphi}_0(\xi_3,\zeta_3,t)&=-1,
        &&\lim_{\zeta_3\to+\infty}\hat{\psi}_0(\xi_3,\zeta_3,t)=\tilde{\psi}_0(\xi_3,t).\label{matching condition 3}
    \end{align}
   
    \begin{figure}[htb]
    \centering
    \includegraphics[trim=0cm 0.3cm 0cm 0.3cm,clip,width=6cm]{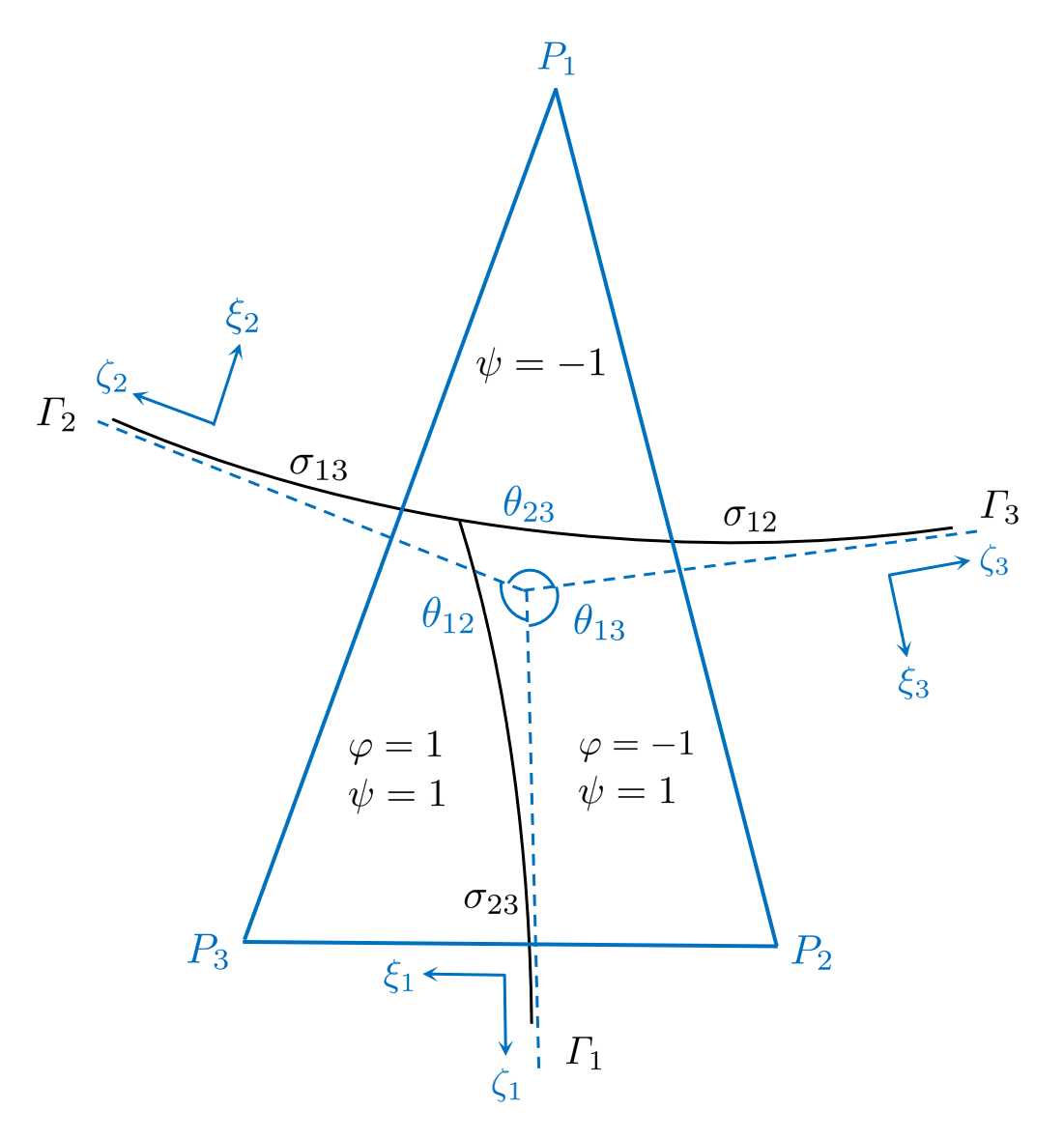}
    \caption{Auxiliary triangle for matching in the triple junction region.}\label{Constructed triangle}
    \end{figure}

\vspace{4mm}
\noindent\textbf{Leading order.} 
At the $\big(\mathcal{O}(\varepsilon^{-2}), \mathcal{O}(\varepsilon^{-1}),\mathcal{O}(\varepsilon^{-2}), \mathcal{O}(\varepsilon^{-1})\big)$ order, the systems in the triple junction region reads
   \begin{align}
   M_1\Delta_{\bm\eta}\hat{\mu}_{\psi_0}&=0,
   \label{first order 3 around triple junction}\\
   -\nabla_{\bm\eta}\cdot\big(\gamma_1(\hat{\varphi}_0)\nabla_{\bm\eta}\hat{\psi}_0)\big)+\gamma_1(\hat{\varphi}_0)f(\hat{\psi}_0)
   +\gamma_2'(\hat{\psi}_0)\Big(\frac{1}{2}(\nabla_{\bm\eta}\hat{\varphi}_0)^2+F(\hat{\varphi}_0) \Big)&=0,
   \label{first order 4 around triple junction}\\
   \nabla_{\bm\eta}\cdot\big(M_2(\hat{\psi}_0)\nabla_{\bm\eta}\hat{\mu}_{\varphi_0}\big)&=0,
   \label{first order 1 around triple junction}\\
    -\nabla_{\bm\eta}\cdot\big(\gamma_2(\hat{\psi}_0)\nabla_{\bm\eta}\hat{\varphi}_0)\big)+\gamma_2(\hat{\psi}_0)f(\hat{\varphi}_0)+\gamma_1'(\hat{\varphi}_0)\Big(\frac{1}{2}(\nabla_{\bm\eta}\hat{\psi}_0)^2+F(\hat{\psi}_0) \Big)&=0,\label{first order 2 around triple junction}
   \end{align}
We first multiply \eqref{first order 4 around triple junction} and \eqref{first order 2 around triple junction}  by $\nabla_{\bm\eta}\hat{\psi}_0$ and $\nabla_{\bm\eta}\hat{\varphi}_0$ respectively, then sum up the results and take the integration over $T$. This leads to a force balance equation for the total force over $T$:
    \begin{align}\label{inter2}
    \iint_T \nabla_{\bm\eta} \cdot \bm{F} \mathrm{d}A = \oint_{\partial T} \bm{F} \cdot \bm{\nu} \mathrm{d}s = \bm 0,
    \end{align}
where the divergence theorem is applied with $\bm\nu$ being the outer normal vector to $\partial T$, and
    \begin{align*}
        \bm{F}=&
        \gamma_1(\hat{\varphi}_0)\bigg(-(\nabla_{\bm\eta}\hat{\psi}_0\otimes\nabla_{\bm\eta}\hat{\psi}_0)+\Big(\frac{1}{2}(\nabla_{\bm\eta}\hat{\psi}_0)^2+F(\hat{\psi}_0)\Big)\mathbf{I}\bigg)\\
        &+\gamma_2(\hat{\psi}_0)\bigg(-(\nabla_{\bm\eta}\hat{\varphi}_0\otimes\nabla_{\bm\eta}\hat{\varphi}_0)+\Big(\frac{1}{2}(\nabla_{\bm\eta}\hat{\varphi}_0)^2+F(\hat{\varphi}_0)\Big)\mathbf{I}\bigg)  
    \end{align*}
is the stress tensor for the phase-field approximation of surface tension. Here $\mathbf{I}$ denotes the second-order isotropic tensor.

Denote by 
$R$ the circumradius of the triangle. For convenience, the line integrals along the three edges $P_3P_2$, $P_1P_3$ and $P_2P_1$ in \eqref{inter2} can be calculated one by one using the corresponding coordinates $\bm\eta_i~(i=1,2,3)$ when $R\to+\infty$. 
Let $\bm\eta_1=\xi_1\bm{\omega}_1+\zeta_1\bm{\tau}_1$ be the reference frame, then the outward unit normal vector can be written as
$\bm\nu=\cos(\bm\nu,\bm{\omega}_1)\bm{\omega}_1+\cos(\bm\nu,\bm{\tau}_1)\bm{\tau}_1$,
where $(\bm\nu,\bm{\omega}_1)$ and $(\bm\nu,\bm{\tau}_1)$ denote the counterclockwise rotational angles between $\bm\nu$ and the basis vectors $\bm{\omega}_1$ and $\bm{\tau}_1$, respectively.  

Taking the line integral along $P_2P_1$ as an example, it is convenient to expand the line integral in terms of the coordinate $\bm\eta_3=\xi_3\bm{\omega}_3+\zeta_3\bm{\tau}_3$, with $\bm{\tau}_3=\bm{\nu}=(-\sin\theta_{13})\bm{\omega}_1+(\cos\theta_{13})\bm{\tau}_1$ being the outward unit normal vector of $P_2P_1$. 
Using \eqref{sigma_{12} and sigma_{13}} as well as the matching conditions \eqref{matching condition 3}, $\lim_{\zeta_3\to+\infty}\partial_{\zeta_3}\hat{\varphi}_0=0$, and $\lim_{\zeta_3\to+\infty}\partial_{\zeta_3}\hat{\psi}_0=0$, we can calculate the $\bm{\omega}_1$ component of \eqref{inter2} as $R\to+\infty$ through the inner solution $(\tilde{\psi}_0,\tilde{\varphi}_0)$:
   \begin{align*}
   &\lim_{R\to+\infty}\ \int\limits_{P_2}^{P_1}\bm{\omega}_1\cdot(\bm{F}\cdot\bm{\nu})\mathrm{d}s\notag\\
   =&-\int_{-\infty}^{+\infty}\Big(\gamma_1(-1)\cos\theta_{13}\sin\theta_{13}(\partial_{\xi_3}\tilde{\psi}_0)^2 \Big)\cos\theta_{13}\mathrm{d}\xi_3
   \notag\\
   &-\int_{-\infty}^{+\infty}\Big(\gamma_2(\tilde{\psi}_0)F(-1)+\gamma_1(-1)F(\tilde{\psi}_0)-\frac{1}{2}\gamma_1(-1)\cos(2\theta_{13})(\partial_{\xi_3}\tilde{\psi}_0)^2 \Big)\sin\theta_{13}\mathrm{d}\xi_3
   \notag\\
   =&-\sigma_{12}\cos^2\theta_{13}\sin\theta_{13}-\frac{1}{2}\sigma_{12}\sin\theta_{13}+
   \frac{1}{2}\sigma_{12}\cos(2\theta_{13})\sin\theta_{13}.
   \end{align*}   
The other two line integrals in \eqref{inter2}
can be treated in a similar manner. Combining the three line integrals together, we have
    \begin{align*}
        &\lim_{R\to+\infty}\ \Big(\int_{P_3}^{P_2}+
        \int_{P_2}^{P_1}+
        \int_{P_1}^{P_3}\Big)
        \bm{\omega}_1\cdot(\bm{F}\cdot\bm{\nu}) \mathrm{d}s
        \notag\\
        &~~~~~~~~~~=-\sigma_{12}\cos^2\theta_{13}\sin\theta_{13}-\frac{1}{2}\sigma_{12}\sin\theta_{13}+
        \frac{1}{2}\sigma_{12}\cos(2\theta_{13})\sin\theta_{13}
        \notag\\
        &~~~~~~~~~~~~~+\sigma_{13}\cos^2\theta_{12}\sin\theta_{12}+\frac{1}{2}\sigma_{13}\sin\theta_{12}-
        \frac{1}{2}\sigma_{13}\cos(2\theta_{12})\sin\theta_{12}
        \notag\\
        &~~~~~~~~~~=0.        
    \end{align*}
It follows that
   \begin{align*}
   \sin\theta_{13}\sigma_{12}=\sin\theta_{12}\sigma_{13}.
   \end{align*}
If we calculate the $\bm{\tau}_1$ component of \eqref{inter2} as $R\to+\infty$, we can derive another force balance equation at the triple junction in the same way:
   \begin{align*}
   \sin\theta_{12}\sigma_{23}=\sin\theta_{23}\sigma_{12}.
   \end{align*}
Combining these two equations, we obtain the Neumann triangle condition at the triple junction:
   \begin{align}\label{Neumann_triangle_condition}
   \frac{\sin\theta_{23}}{\sigma_{23}}=\frac{\sin\theta_{12}}{\sigma_{12}}=\frac{\sin\theta_{13}}{\sigma_{13}}.
   \end{align}
\par We now summarize the leading order behavior:
   \begin{align*}
   \begin{cases}
   \psi_0=-1 \quad &\text{in}\ \Omega_1,\\
   \psi_0=1,~~ \varphi_0=-1 \quad &\text{in}\ \Omega_2,\\
   \psi_0=1,~~ \varphi_0=1~~  \quad &\text{in}\ \Omega_3,\\
   \end{cases}
   \end{align*}
and
   \begin{align*}
   \begin{cases}
   \Delta{{\mu_\psi}_0}=0 \quad &\text{in}\ \Omega_1\cup\Omega_2\cup\Omega_3,\\
   \Delta{{\mu_\varphi}_0}=0 \quad &\text{in}\ \Omega_2\cup\Omega_3,
   \end{cases}
   \end{align*}
with  
   \begin{align*}
   \begin{cases}
   \partial_{\bm{n}}{\mu_{\psi_0}}=0, \
   \partial_{\bm{n}}{\mu_{\varphi_0}}=0 \quad &\text{on}\ \partial\Omega,
   \\
   \mu_{\varphi_0}=-\frac{1}{2}\sigma_{23}\kappa_{23}(x,t),\
   d_t=\frac{1}{2}[{\mathbf{m}}\cdot\nabla\mu_{\varphi_0}]_{-}^{+}\quad &\text{on}\ \Gamma_1,
   \\
   \mu_{\psi_0}=-\frac{1}{2}\sigma_{13}\kappa_{13}(x,t), \ 
   d_t=\frac{1}{2}[{\mathbf{m}}\cdot\nabla\mu_{\psi_0}]_{-}^{+} 
   \quad &\text{on}\ \Gamma_2,
   \\
   \mu_{\psi_0}=-\frac{1}{2}\sigma_{12}\kappa_{12}(x,t), \ 
   d_t=\frac{1}{2}[{\mathbf{m}}\cdot\nabla\mu_{\psi_0}]_{-}^{+} \quad &\text{on}\ \Gamma_3,
   \\
   \frac{\sin\theta_{23}}{\sigma_{23}}=\frac{\sin\theta_{12}}{\sigma_{12}}=\frac{\sin\theta_{13}}{\sigma_{13}}\quad &\text{at}\  \Gamma_1\cap\Gamma_2\cap\Gamma_3.
   \end{cases}
   \end{align*}
       
    \begin{figure}[h]
       \centering
        \includegraphics[trim=0cm 0.5cm 0cm 0.3cm,clip,width=8.6cm]{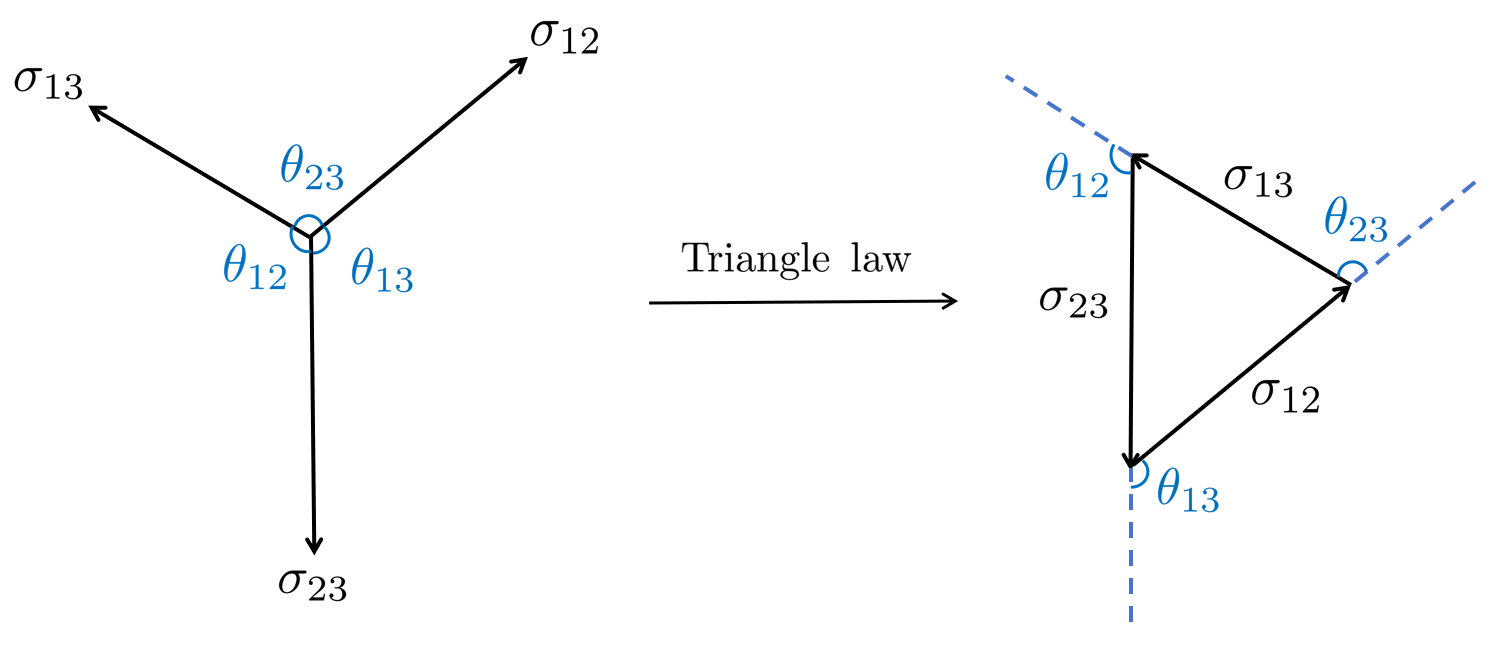}
        \caption{Sketch of the force balance at the triple junction and the closed force triangle.}\label{triangle_law}
    \end{figure} 
    \begin{remark}\label{remark_triangle_law}
   When the three surface tensions are balanced at the sharp-interface triple junction, the total force is zero. As shown in Fig. \ref{triangle_law}, this is equivalent to the triangle law of vector addition, i.e., the three vectors form a closed force triangle. It is obvious that the Neumann triangle condition \eqref{Neumann_triangle_condition} is essentially the sine rule derived from this closed force triangle.
   \end{remark}

\section{Numerical schemes}\label{Sec:Numerical_schemes}
In this section, we extend the MOS technique \cite{Lu2025Decoupled} to general dissipative systems including the proposed $N$-phase model, thus establishing a unified framework to construct a decoupled and energy stable numerical scheme with second-order temporal accuracy. 
As a specific implementation, we develop an efficient scheme for the ternary-phase case. This scheme requires only solving a sequence of linear equations at each time step.
We show that the proposed scheme is energy stable with respect to the original free energy.
   
\subsection{MOS technique for general dissipative systems}\label{Sec4.2}
We consider the general dissipative system:
\begin{equation}\label{PDE}
\left\{\begin{array}{l}
	\Phi_t = \mathcal M(\Phi)\frac{\delta W(\Phi)}{\delta \Phi},\\
    \Phi|_{t=0}=\Phi_0.
\end{array}\right.
\end{equation}
where $\Phi = (\varphi_1,\varphi_2,\cdots,\varphi_m)^{\top}$, $\mathcal M(\Phi)$ is a negative semi-definite mobility operator, and $W(\Phi)$ denotes the total free energy. Due to the coupling among the variables, it is challenging to numerically solving this system with high-order accuracy in a decoupling way, while preserving the original energy dissipation structure. The method developed in this section aims to accomplish this complex objective. 

Suppose we can decompose $\mathcal{M}$ into $n$ parts as follows:
\begin{equation*}
	\mathcal{M}=\mathcal{M}_1+\cdots+\mathcal{M}_n,
\end{equation*}
where $\mathcal{M}_i$ $(1\leqslant i\leqslant n)$ is negative semi-definite. Consequently, the original system is split into the following $n$ dissipative subsystems:
\begin{equation}\label{splited system}
	\Phi_t = \mathcal M_i\frac{\delta W}{\delta \Phi},\qquad i=1,2,\cdots, n,
\end{equation}

Different from other operator splitting schemes \cite{li2022stability, li2022stabilityand}, the MOS technique offers flexibility in constructing high-order methods by a proper decomposition of the mobility operator. Let $\mathcal{S}^i_\tau$ (for $1 \leqslant i \leqslant n$) denote the solution map with time step $\tau$ for the $i$-th subsystem in \eqref{splited system} and let $\mathcal{S}^{\mathrm{e}}_\tau$ denote the exact solution map of the entire system \eqref{PDE}.
Then the Strang-type composition leads to the following second-order approximation:
\begin{align*}
\Phi(\tau,\cdot):=\mathcal{S}^{n}_{\tau/2}
\circ\cdots\circ\mathcal{S}^{2}_{\tau/2}
\circ\mathcal{S}^{1}_{\tau}\circ
\mathcal{S}^{2}_{\tau/2}\circ\cdots\circ\mathcal{S}^{n}_{\tau/2}\Phi_0=\mathcal{S}_\tau^{\mathrm{e}}\Phi_0
+O(\tau^3).
\end{align*}
By a simple argument using the transition property of inequalities, we can easily establish the energy stability of this approximation.
\begin{theorem}\label{theorem energy stable}
Assume that $\mathcal{S}^1_{\tau}$, $\mathcal{S}^2_{\tau}$, $\cdots$, $\mathcal{S}^{n}_{\tau}$ are energy stable integrators for the dissipative systems in (\ref{splited system}) respectively, i.e., $W(\mathcal{S}^i_{\tau}\Phi)\leqslant W(\Phi)$ for $i=1,2,\cdots,n$.
Then the Strang-type composition method
$\mathcal{S}^{n}_{\tau/2}
\circ\cdots\circ\mathcal{S}^{2}_{\tau/2}
\circ\mathcal{S}^{1}_{\tau}\circ
\mathcal{S}^{2}_{\tau/2}\circ\cdots\circ\mathcal{S}^{n}_{\tau/2}$ is also energy stable for the original system (\ref{PDE}): $W(\Phi(\tau,\cdot))\leqslant W(\Phi_0)$.
\end{theorem}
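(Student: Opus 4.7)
The plan is to exploit the fact that energy stability is preserved under arbitrary composition of energy-stable maps, by a straightforward telescoping (transitivity) argument. The composition is applied sequentially, so the output of each sub-integrator serves as the input of the next, and one only needs that each individual sub-integrator is non-increasing in $W$.

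First I would observe that the hypothesis $W(\mathcal{S}^i_\tau \Phi) \leqslant W(\Phi)$ holds for any admissible input $\Phi$ and any step size; in particular it also applies with step $\tau/2$, giving $W(\mathcal{S}^i_{\tau/2}\Phi) \leqslant W(\Phi)$ for $i=1,\ldots,n$. This follows from the assumption that each $\mathcal{S}^i$ integrates a dissipative subsystem in \eqref{splited system} with negative semi-definite mobility $\mathcal{M}_i$, so the dissipation property is independent of the size of the time increment used.

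Next I would define the intermediate states produced by the Strang-type composition,
\begin{equation*}
\Phi^{(0)} = \Phi_0,\qquad \Phi^{(k)} = \mathcal{S}^{i_k}_{\tau_k}\Phi^{(k-1)}, \quad k=1,\ldots,2n-1,
\end{equation*}
where the sequence $(i_k,\tau_k)$ records the order $(n,\tau/2),\ldots,(2,\tau/2),(1,\tau),(2,\tau/2),\ldots,(n,\tau/2)$ of sub-integrators appearing in the composition, so that $\Phi(\tau,\cdot) = \Phi^{(2n-1)}$. Applying the per-step estimate at each $k$ yields the chain
\begin{equation*}
W(\Phi(\tau,\cdot)) = W(\Phi^{(2n-1)}) \leqslant W(\Phi^{(2n-2)}) \leqslant \cdots \leqslant W(\Phi^{(1)}) \leqslant W(\Phi^{(0)}) = W(\Phi_0),
\end{equation*}
and transitivity of $\leqslant$ then gives the desired inequality $W(\Phi(\tau,\cdot)) \leqslant W(\Phi_0)$.

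There is essentially no hard step here: the proof is a one-line telescoping argument once the assumption on each $\mathcal{S}^i$ is expanded. The only mild subtlety worth mentioning explicitly is that the energy stability of each $\mathcal{S}^i_{\tau/2}$ (with the halved step) is needed, which I would justify in a single sentence by noting that the assumption in the statement is uniform in the time step. No cancellation between sub-integrators, no commutator estimates, and no properties specific to the $N$-phase model are required, so the theorem and its proof apply verbatim to any decomposition of $\mathcal{M}$ into negative semi-definite parts.
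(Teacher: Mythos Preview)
Your proposal is correct and matches the paper's approach: the paper explicitly says the result follows ``by a simple argument using the transition property of inequalities,'' which is exactly your telescoping chain through the intermediate states. Your remark about needing energy stability at step $\tau/2$ is a fair point to make explicit, but since the hypothesis is stated for generic $\tau$ it is immediate, as you note.
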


The MOS method allows the use of any numerical integrators for each subsystem and composite such integrators in different manners. One only needs to ensure the desired numerical accuracy and the energy stability issue when selecting numerical integrators. For example, one can apply energy stable numerical methods such as convex splitting \cite{ Unconditionallygradient}, the stabilization method \cite{shen2010numerical}, IEQ \cite{yang2016linear}, SAV \cite{shen2018scalar}, etc., to the subsystems \eqref{splited system}.
Let $\widehat{\mathcal{S}}^{i}_{\tau}$ denote a second-order accurate numerical solver for the $i$-th subsystem (that is, $\widehat{\mathcal{S}}^{i}_{\tau}=\mathcal{S}^{i}_{\tau}+O(\tau^3))$. Using the Strang-type composition, we can construct a second-order energy stable scheme for \eqref{PDE} as follows
\begin{equation}\label{eq second order ss}
        \widehat{\Phi}(\tau,\cdot)=\widehat{\mathcal{S}}^{n}_
        {\tau/2}
        \circ\cdots\circ\widehat{\mathcal{S}}^{2}
        _{\tau/2}
        \circ\widehat{\mathcal{S}}^{1}_{\tau}
        \circ
       \widehat{\mathcal{S}}^{2}_{\tau/2}
       \circ\cdots\circ\widehat{\mathcal{S}}^{n}
       _{\tau/2}\Phi_0.
\end{equation}
It is straightforward to see that this composition has second-order accuracy,
\begin{align*} 
\mathcal{S}_{\tau}^{\mathrm{e}}\Phi_0=&~\mathcal{S}^{n}_{\tau/2}
\circ\cdots\circ\mathcal{S}^{2}_{\tau/2}
\circ\mathcal{S}^{1}_{\tau}\circ
\mathcal{S}^{2}_{\tau/2}\circ\cdots\circ\mathcal{S}^{n}_{\tau/2}\Phi_0+O(\tau^3)\\
=&~\big(\widehat{\mathcal{S}}^{n}_{\tau/2}+O(\tau^3)\big)
\circ\cdots\circ
\big(\widehat{\mathcal{S}}^{2}_{\tau/2}+O(\tau^3)\big)
\circ\big(\widehat{\mathcal{S}}^{1}_\tau+O(\tau^3)\big)
\notag\\
&~\circ\big(\widehat{\mathcal{S}}^{2}_{\tau/2}+O(\tau^3)\big)
\circ\cdots\circ
\big(\widehat{\mathcal{S}}^{n}_{\tau/2}+O(\tau^3)\big)\Phi_0+O(\tau^3)\\
=&~\widehat{\mathcal{S}}^{n}_{\tau/2}
\circ\cdots\circ\widehat{\mathcal{S}}^{2}_{\tau/2}
\circ\widehat{\mathcal{S}}^{1}_{\tau}\circ
\widehat{\mathcal{S}}^{2}_{\tau/2}\circ\cdots\circ\widehat{\mathcal{S}}^{n}_{\tau/2}\Phi_0
+O(\tau^3).
\end{align*}
Analogous to the Theorem \ref{theorem energy stable}, we have the following corollary.
\begin{corollary}\label{theorem energy stable1}
Assume that the discrete operators $\widehat{\mathcal{S}}^{1}_{\tau}$, $\widehat{\mathcal{S}}^{2}_{\tau}$, $\cdots$, $\widehat{\mathcal{S}}^{n}_{\tau}$ are energy stable integrators for the $n$ dissipative systems in \eqref{splited system}, i.e., $W(\widehat{\mathcal{S}}^i_{\tau}\Phi)\leqslant W(\Phi)$ for $i=1,2,\cdots,n$. Then the second-order Strang-type composition 
$\widehat{\mathcal{S}}^{n}_{\tau/2}
\circ\cdots\circ\widehat{\mathcal{S}}^{2}_{\tau/2}
\circ\widehat{\mathcal{S}}^{1}_{\tau}\circ
\widehat{\mathcal{S}}^{2}_{\tau/2}\circ\cdots\circ\widehat{\mathcal{S}}^{n}_{\tau/2}$ has the energy dissipation property: $W(\widehat\Phi(\tau,\cdot))\leqslant W(\Phi_0)$.
\end{corollary}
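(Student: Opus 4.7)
The plan is to mirror exactly the argument that established Theorem \ref{theorem energy stable}, replacing the exact sub-system flows $\mathcal{S}^i_\tau$ with their numerical counterparts $\widehat{\mathcal{S}}^i_\tau$. Nothing about the composition argument used the exactness of the sub-solvers; it used only the scalar inequality $W(\mathcal{S}^i_\tau \Phi)\leqslant W(\Phi)$. Since the hypothesis of the corollary supplies precisely the analogous discrete inequality $W(\widehat{\mathcal{S}}^i_\tau \Phi)\leqslant W(\Phi)$ for $i=1,\ldots,n$, the same chain of inequalities goes through.

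Concretely, I would introduce the sequence of intermediate states produced by the composition applied to $\Phi_0$. Define
\begin{align*}
\Phi^{(0)} &= \Phi_0,\\
\Phi^{(k)} &= \widehat{\mathcal{S}}^{n+1-k}_{\tau/2}\Phi^{(k-1)},\qquad k=1,\ldots,n-1,\\
\Phi^{(n)} &= \widehat{\mathcal{S}}^{1}_{\tau}\Phi^{(n-1)},\\
\Phi^{(n+k)} &= \widehat{\mathcal{S}}^{k+1}_{\tau/2}\Phi^{(n+k-1)},\qquad k=1,\ldots,n-1,
\end{align*}
so that $\widehat{\Phi}(\tau,\cdot)=\Phi^{(2n-1)}$. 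Then each transition $\Phi^{(k-1)}\mapsto\Phi^{(k)}$ is a single application of some $\widehat{\mathcal{S}}^i_{\cdot}$, and by hypothesis
\[
W(\Phi^{(k)})\leqslant W(\Phi^{(k-1)}),\qquad k=1,\ldots,2n-1.
\]

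Chaining these $2n-1$ inequalities by transitivity yields $W(\widehat{\Phi}(\tau,\cdot))=W(\Phi^{(2n-1)})\leqslant W(\Phi^{(0)})=W(\Phi_0)$, which is the claimed energy dissipation property. I would state this concisely and remark that the argument is structurally identical to the one for Theorem \ref{theorem energy stable}; in particular, no use is made of the second-order accuracy or of any consistency relation between the $\widehat{\mathcal{S}}^i_\tau$ and the continuous flows. The only subtle point worth flagging is that the hypothesis must hold for both step sizes appearing in the Strang composition, namely $\tau$ and $\tau/2$; this is automatic in the corollary's formulation since the assumed inequality is required to hold for the generic time step $\tau$. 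Hence there is no real obstacle, and the proof amounts to a one-line telescoping of the $n$ sub-system energy estimates.
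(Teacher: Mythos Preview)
Your proposal is correct and matches the paper's own approach: the paper states the corollary as ``analogous to Theorem~\ref{theorem energy stable},'' which in turn was justified by ``a simple argument using the transition property of inequalities,'' i.e., exactly the telescoping chain of sub-step energy inequalities you wrote out. Your remark that the hypothesis must cover both step sizes $\tau$ and $\tau/2$ is a valid clarification but is implicit in the paper's formulation as well.
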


\subsection{Energy stable scheme for ternary-phase model}
By taking $n=m=N-1$, the framework for constructing decoupled and energy stable numerical schemes for general dissipative systems established in the previous subsection can be made concrete in its application to the proposed $N$-phase model. In this subsection, we apply the MOS technique to the ternary-phase model by providing a specific numerical scheme. We first introduce Lipschitz continuity assumptions on the derivatives of $F(z)$, $\gamma_1(\varphi)$ and $\gamma_2(\psi)$:
there exist constants $L_F$, $L_{\gamma_1}$ and $L_{\gamma_2}$ such that
   \begin{align*}
   \max_{z\in \mathbb{R}}|F''(z)|\leqslant L_F,\qquad
   \max_{\varphi\in \mathbb{R}}|\gamma_1''(\varphi)|\leqslant L_{\gamma_1},\qquad
   \max_{\psi\in \mathbb{R}}|\gamma_2''(\psi)|\leqslant L_{\gamma_2}.
   \end{align*}
Note that for commonly used potentials such as the double-well potential $F(z)=\frac{1}{4}(z^2-1)^2$, a regularized version can be constructed by modifying its growth rate outside the interval $[-1,1]$ to ensure that the Lipschitz continuity condition is satisfied. Similar regularization strategies can be applied to $\gamma_1(\varphi)$ and $\gamma_2(\psi)$, since the physically relevant phase-field behavior is confined to the bounded domain where $(\psi,\varphi)$ lies within a 2-cube.

Based on the composition \eqref{eq second order ss}, that is, $\widehat{\mathcal{S}}_{\tau/2}^{2}\circ\widehat{\mathcal{S}}_{\tau}^{1}\circ
\widehat{\mathcal{S}}_{\tau/2}^{2}(\psi^n,\varphi^n)$,
the second-order modified Crank-Nicolson scheme for the ternary-phase model \eqref{CH11}--\eqref{ternary_BC} is given by:

   \noindent\textbf{Step 1 ($\widehat{\mathcal{S}}_{\tau/2}^{2}$)}:
   \begin{align}  
   &\frac{\varphi^{n+\frac{1}{2}}-\varphi^n}{\tau/2}=\nabla\cdot\big(M_2(\psi^n)\nabla\mu_{\varphi}^{n+\frac{1}{4}}\big),\label{second order scheme3 1}\\
   &\mu_{\varphi}^{n+\frac{1}{4}}=-\varepsilon\nabla\cdot\Big(
   \gamma_2(\psi^n)\nabla\frac{\varphi^{n+\frac{1}{2}}+\varphi^n}{2} \Big)+\frac{1}{\varepsilon}\gamma_2(\psi^n)\mathcal{T}[f](\varphi^n,\varphi^{n+\frac{1}{2}};A_1,\tau)
   \notag\\
   &~\quad\qquad+g(\psi^n)\mathcal{T}[\gamma_1'](\varphi^n,\varphi^{n+\frac{1}{2}};B_1,\tau),\label{second order scheme3 2}
   \end{align} 
   \noindent\textbf{Step 2 ($\widehat{\mathcal{S}}_{\tau}^{1})$}:
   \begin{align}     
   &\frac{\psi^{n+1}-\psi^n}{\tau}=\nabla\cdot\big(M_1\nabla\mu_{\psi}^{n+\frac{1}{2}}\big),\label{second order scheme3 3}
   \\
   &\mu_{\psi}^{n+\frac{1}{2}}=-\varepsilon\nabla\cdot\Big(
   \gamma_1(\varphi^{n+\frac{1}{2}})\nabla\frac{\psi^{n+1}+\psi^n}{2} \Big)
   +\frac{1}{\varepsilon}\gamma_1(\varphi^{n+\frac{1}{2}})\mathcal{T}[f](\psi^n,\psi^{n+1};A_2,\tau)
   \notag\\
   &~\quad\qquad+g(\varphi^{n+\frac{1}{2}})\mathcal{T}[\gamma_2'](\psi^n,\psi^{n+1};B_2,\tau),
   \label{second order scheme3 4}
   \end{align}
   \noindent\textbf{Step 3 ($\widehat{\mathcal{S}}_{\tau/2}^{2})$}:
   \begin{align}   
   &\frac{\varphi^{n+1}-\varphi^{n+\frac{1}{2}}}{\tau/2}=\nabla\cdot\big(M_2(\psi^{n+1})\nabla\mu_{\varphi}^{n+\frac{3}{4}}\big),\label{second order scheme3 5}
   \\
   &\mu_{\varphi}^{n+\frac{3}{4}}=-\varepsilon\nabla\cdot\Big(
   \gamma_2(\psi^{n+1})\nabla\frac{\varphi^{n+1}+\varphi^{n+\frac{1}{2}}}{2} \Big)
   +\frac{1}{\varepsilon}\gamma_2(\psi^{n+1})\mathcal{T}[f](\varphi^{n+\frac{1}{2}},\varphi^{n+1};A_1,\tau)
   \notag\\
   &~\quad\qquad+g(\psi^{n+1})\mathcal{T}[\gamma_1'](\varphi^{n+\frac{1}{2}},\varphi^{n+1};B_1,\tau),\label{second order scheme3 6}
   \end{align}
with boundary conditions
\begin{equation}\label{second order scheme3 BC3}
   \begin{aligned}
   \partial_{n}\varphi^{n+\frac{1}{2}}=0,\qquad\partial_{n}\psi^{n+1}=0,\qquad \partial_{n}\varphi^{n+1}=0,\\
    \partial_{n}\mu_{\varphi}^{n+\frac{1}{4}}=0,\qquad \partial_{n}\mu_{\psi}^{n+\frac{1}{2}}=0,   \qquad \partial_{n}\mu_{\varphi}^{n+\frac{3}{4}}=0
   \end{aligned}\qquad \text{on}\ \partial\Omega,
   \end{equation}
where 
\begin{equation*}
\mathcal{T}[f](\varphi,\psi;A,\tau)=f(\varphi)+\frac{1}{2}f'(\varphi)(\psi-\varphi)
   +A\tau(\psi-\varphi),
\end{equation*}
$g(\bullet)=\frac{\varepsilon}{2}|\nabla \bullet|^2+\frac{1}{\varepsilon}F(\bullet)$, and $A_1$, $A_2$, $B_1$, $B_2$ are four nonnegative constants.

   
\begin{theorem}\label{Thm_energy_stable}
Under the condition
   \begin{align*}
   \tau\geqslant \max\biggl\{\frac{L_{F}}{A_1},\frac{L_{\gamma_1}}{B_1},\frac{L_{F}}{A_2},\frac{L_{\gamma_2}}{B_2}\biggr\},
   \end{align*}
the scheme \eqref{second order scheme3 1}--\eqref{second order scheme3 BC3} is energy stable and the following discrete energy law holds:
   \begin{align*}
   {W}(\psi^{n+1},\varphi^{n+1})-{W}(\psi^{n},\varphi^{n})\leqslant&
   -\frac{\tau}{2}\int_{\Omega}M_2(\psi^n)|\nabla\mu_\varphi^{n+\frac{1}{4}}|^2\mathrm{d}\mathbf{x}
   -\tau\int_{\Omega}M_1|\nabla\mu_\psi^{n+\frac{1}{2}}|^2\mathrm{d}\mathbf{x}
   \notag\\
   &-\frac{\tau}{2}\int_{\Omega}M_2(\psi^{n+1})|\nabla\mu_\varphi^{n+\frac{3}{4}}|^2\mathrm{d}\mathbf{x}.
   \end{align*}
\end{theorem}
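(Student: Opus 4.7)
The scheme \eqref{second order scheme3 1}--\eqref{second order scheme3 BC3} is the Strang-type composition $\widehat{\mathcal{S}}^{2}_{\tau/2}\circ\widehat{\mathcal{S}}^{1}_{\tau}\circ\widehat{\mathcal{S}}^{2}_{\tau/2}$, so by the logic behind Corollary \ref{theorem energy stable1} it would suffice to prove that each of the three substeps separately decreases $W$. I will nevertheless proceed directly through the substeps in order, carrying each estimate with its explicit dissipation rate so that summation reproduces the right-hand side stated in the theorem. For \textbf{Step 1} the plan is to take the $L^{2}$ inner product of \eqref{second order scheme3 1} with $\mu_{\varphi}^{n+\frac{1}{4}}$ and of \eqref{second order scheme3 2} with $\varphi^{n+\frac{1}{2}}-\varphi^{n}$, equate the left-hand sides, and integrate by parts using \eqref{second order scheme3 BC3} to convert the capillary term into the telescoping quadratic form $\tfrac{\varepsilon}{2}\int_{\Omega}\gamma_{2}(\psi^{n})\bigl(|\nabla\varphi^{n+\frac{1}{2}}|^{2}-|\nabla\varphi^{n}|^{2}\bigr)\,\mathrm{d}\mathbf{x}$. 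Steps 2 and 3 will be treated identically with $\psi$ and $\varphi$ playing interchanged roles.

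\textbf{Key scalar inequality.} The analytical core is the pointwise estimate: if $A\tau\geq L_{F}$, then
\begin{align*}
\mathcal{T}[f](a,b;A,\tau)\,(b-a)\ \geq\ F(b)-F(a),
\end{align*}
with an analogous bound for $\mathcal{T}[\gamma_{i}']$ whenever $B\tau\geq L_{\gamma_{i}}$. Setting $\delta=b-a$ and using Taylor's theorem with integral remainder,
\begin{align*}
F(a+\delta)-F(a)=f(a)\delta+\tfrac{1}{2}f'(a)\delta^{2}+\int_{0}^{\delta}\bigl(f'(a+s)-f'(a)\bigr)(\delta-s)\,\mathrm{d}s,
\end{align*}
the remainder is controlled in absolute value by $L_{F}\delta^{2}$ via the Lipschitz bound $|f'|\leq L_{F}$. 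Subtracting from $\mathcal{T}[f](a,b;A,\tau)\delta=f(a)\delta+\tfrac{1}{2}f'(a)\delta^{2}+A\tau\delta^{2}$ leaves $A\tau\delta^{2}$ minus a quantity of modulus at most $L_{F}\delta^{2}$, which is nonnegative under the hypothesis. Multiplying by the pointwise nonnegative weights $\tfrac{1}{\varepsilon}\gamma_{2}(\psi^{n})$ and $g(\psi^{n})$ preserves the estimate after spatial integration.

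\textbf{Assembly.} With the scalar inequality in hand, combining the gradient, Onsager, and nonlinear pieces of Step 1 will give
\begin{align*}
W(\psi^{n},\varphi^{n+\frac{1}{2}})-W(\psi^{n},\varphi^{n})\ \leq\ -\tfrac{\tau}{2}\int_{\Omega}M_{2}(\psi^{n})\,|\nabla\mu_{\varphi}^{n+\frac{1}{4}}|^{2}\,\mathrm{d}\mathbf{x},
\end{align*}
since the capillary telescoping term together with the lower bounds on the $\mathcal{T}[f]$ and $\mathcal{T}[\gamma_{1}']$ integrals reconstitute the increment of the full energy \eqref{eq:ternary_energy} with $\psi^{n}$ held fixed. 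The parallel Step-2 and Step-3 inequalities yield the contributions $\tau\int M_{1}|\nabla\mu_{\psi}^{n+\frac{1}{2}}|^{2}$ and $\tfrac{\tau}{2}\int M_{2}(\psi^{n+1})|\nabla\mu_{\varphi}^{n+\frac{3}{4}}|^{2}$, and telescoping the three lines through the intermediate states $(\psi^{n},\varphi^{n+\frac{1}{2}})$ and $(\psi^{n+1},\varphi^{n+\frac{1}{2}})$ collapses to the claimed discrete energy law. The hard part will be the scalar inequality: because the Lipschitz assumption controls only $|F''|$ and not $|F'''|$, the Taylor remainder cannot be made $o(\delta^{2})$, and this unavoidable $O(\delta^{2})$ loss is exactly what forces the threshold $\tau\geq L_{F}/A_{i}$ (and similarly $L_{\gamma_{i}}/B_{i}$) on the stabilization parameters.
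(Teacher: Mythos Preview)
Your proposal is correct and follows essentially the same route as the paper: test \eqref{second order scheme3 1}--\eqref{second order scheme3 2} against $\mu_{\varphi}^{n+\frac14}$ and $\varphi^{n+\frac12}-\varphi^n$, use integration by parts for the gradient telescoping, control the nonlinear terms by Taylor expansion plus the Lipschitz bound $|F''|\le L_F$, and telescope the three substeps. The only difference is presentational: you package the nonlinear control into the single pointwise inequality $\mathcal{T}[f](a,b;A,\tau)(b-a)\ge F(b)-F(a)$, whereas the paper bounds the contributions $f(\varphi^n)(\varphi^{n+\frac12}-\varphi^n)$ and $\tfrac12 f'(\varphi^n)(\varphi^{n+\frac12}-\varphi^n)^2$ separately (each losing $\tfrac{L_F}{2}\delta^2$) before combining them with the stabilization term; the arithmetic and the resulting threshold $A\tau\ge L_F$ are identical.
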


\begin{proof}
    The detailed proof is presented in \ref{proof_of_Thm}.
\end{proof}

\section{Numerical simulations}\label{Sec:Numerical_simulations}
In this section, we numerically validate the theoretical results in the previous sections and investigate the applications of the proposed model. In Section \ref{Accuracy test}, we validate the temporal and spatial orders of accuracy for the proposed numerical scheme. Then the energy stability, the algebraic consistency, the volume conservation, and the asymptotic properties such as the Neumann angle condition of the DBPF model are validated in Section \ref{stability_algebraic consistency_volume-conservation}. Lastly in Sections \ref{Liquid lens between two stratified fluids} and \ref{Evolution of two close-by droplets}, the proposed DBPF model is successfully employed to numerically simulate several benchmark problems, including the formation of liquid lenses between two stratified fluids, and the compound droplet emulsions.
  
In all numerical experiments, we numerically solve the DBPF model using the MOS-based numerical scheme on a uniform mesh in the computational domain $\Omega=[0,1]\times[0,1]$. Unless otherwise specified, we always take $M_1=m_1$ and $M_2(\psi)=m_2(\frac{1+\psi}{2})^8$.

\subsection{Accuracy test of numerical scheme}\label{Accuracy test}
In this subsection, we validate the orders of accuracy for the proposed second-order scheme \eqref{second order scheme3 1}--\eqref{second order scheme3 BC3}. As exact solutions are unknown, 
the convergence rates are estimated by examining the differences between solutions on adjacent mesh levels \cite{LeVeque2007Finite}:
\begin{equation*}
p\approx\log_2\big(\|U^h-U^{h/2}\|/\|U^{h/2}-U^{h/4}\|\big),
\end{equation*}
where $U^h$ is the numerical solution in the mesh with size $h$. 
Then the numerical error can be approximated as 
    \begin{align*}
        &e(h):=Ch^p+O(h^{p+1})
         \approx\frac{1}{1-1/2^p}\|U^h-U^{h/2}\|.
    \end{align*}
We will compute the $l_\infty$- and $l_2$-convergence rates in space and time by employing this approach. We set the interfacial width as $\varepsilon=0.03$, the mobility coefficient $m_1=m_2=1e-4$, the stabilization parameters $A_1=B_1=A_2=B_2=100$. The initial conditions are given by
   \begin{align}\label{initial_conditions_square}
   \psi(x,y)=\tanh\Big(\frac{y-0.5}{\varepsilon}\Big),\qquad
   \varphi(x,y)=\tanh\Big(\frac{x-0.5}{\varepsilon}\Big).
   \end{align}

In the accuracy test for spatial discretization, the time step is fixed at $\tau=10^{-3}$.  The numerical solutions are computed using the second-order scheme with different mesh sizes $h=1/64$, $1/128$, $1/256$, $1/512$, $1/1024$ at $t=0.5$ and $t=1$. As shown in Table \ref{l_2 spatial errors}, we observe that the scheme \eqref{second order scheme3 1}--\eqref{second order scheme3 BC3} achieves second-order accuracy in space in the sense of the norms $l_\infty$ and $l_2$. 
\begin{table}[ht]
\centering
\caption{The $l_\infty$-errors, $l_2$-errors and order of convergence for $\varphi$ and $\psi$ in space at $t=0.5$ and $t=1$ with fixed $\tau=10^{-3}$ and different mesh size $h$.}
\begin{tabular}{@{}>{\centering\arraybackslash}p{1.4cm}cccccccc@{}}
\toprule[1.5pt]
\cmidrule(lr){2-5}\cmidrule(lr){6-9}
 & \multicolumn{2}{c}{$t=0.5$} & \multicolumn{2}{c}{$t=1$} & \multicolumn{2}{c}{$t=0.5$} & \multicolumn{2}{c}{$t=1$}\\
\cmidrule(lr){2-3}\cmidrule(lr){4-5}\cmidrule(lr){6-7}\cmidrule(lr){8-9}
$h$ & $\|e_{\varphi}\|_{\infty}$ & $\text{Order}$  & $\|e_{\varphi}\|_{\infty}$ & $\text{Order}$ & $\|e_{\varphi}\|_{2}$ & $\text{Order}$ & $\|e_{\varphi}\|_{2}$ & $\text{Order}$\\
\midrule[0.8pt]
$1/64$ & 7.58e-3 & - & 7.46e-3  & - & 1.35e-3 & - & 1.35e-3  & -\\
$1/128$ & 2.00e-3 & 1.92 & 1.97e-3 & 1.92 & 3.27e-4 & 2.05 & 3.28e-4 & 2.04\\
$1/256$ & 4.95e-4 & 2.02 & 4.87e-4 & 2.02 & 8.07e-5 & 2.02 & 8.10e-5 &2.02 \\
$1/512$ & 1.24e-4 & 2.01 & 1.22e-4 & 2.00 & 2.01e-5 & 2.00 & 2.02e-5  &2.00\\
\bottomrule[1.2pt]
$h$ & $\|e_{\psi}\|_{\infty}$ & $\text{Order}$ & $\|e_{\psi}\|_{\infty}$ & $\text{Order}$ & $\|e_{\psi}\|_{2}$ & $\text{Order}$ & $\|e_{\psi}\|_{2}$ & $\text{Order}$\\
\midrule[0.8pt]
$1/64$ & 1.69e-2 & - & 2.04e-2  &-   & 2.11e-3 & - & 2.20e-3 &-\\
$1/128$ & 4.75e-3 & 1.83 & 5.56e-3 & 1.88 & 5.14e-4 & 2.04 & 5.43e-4 & 2.02 \\
$1/256$ & 1.20e-3 & 1.99 & 1.40e-3 & 1.98 & 1.28e-4 & 2.01 & 1.35e-4 & 2.00\\
$1/512$ & 3.00e-4 & 2.00 & 3.54e-4 & 1.99 & 3.19e-5 & 2.00 & 3.38e-5 & 2.00\\
\bottomrule[1.5pt]
\end{tabular}
\label{l_2 spatial errors}
\end{table}

\begin{table}[ht]
\centering
\caption{The $l_\infty$-errors, $l_2$-errors and order of convergence for $\varphi$ and $\psi$ in time at $t=0.5$ and $t=1$ with fixed $h=1/256$ and different time step $\tau$.}
\begin{tabular}{@{}>{\centering\arraybackslash}p{1.4cm}cccccccc@{}}
\toprule[1.5pt]
\cmidrule(lr){2-5}\cmidrule(lr){6-9}
 & \multicolumn{2}{c}{$t=0.5$} & \multicolumn{2}{c}{$t=1$} & \multicolumn{2}{c}{$t=0.5$} & \multicolumn{2}{c}{$t=1$}\\
\cmidrule(lr){2-3}\cmidrule(lr){4-5}\cmidrule(lr){6-7}\cmidrule(lr){8-9}
$\tau$ & $\|e_{\varphi}\|_{\infty}$ & $\text{Order}$  & $\|e_{\varphi}\|_{\infty}$ & $\text{Order}$ & $\|e_{\varphi}\|_{2}$ & $\text{Order}$ & $\|e_{\varphi}\|_{2}$ & $\text{Order}$\\
\midrule[0.8pt]
$1/64$ & 1.30e-3 & - & 1.47e-3  &  -& 1.35e-4 & - & 7.50e-5 & -\\
$1/128$ & 3.44e-4 & 1.91 & 3.70e-4 & 1.93 & 3.38e-5 & 2.00 & 1.91e-5 & 1.98\\
$1/256$ & 8.74e-5 & 1.98 & 9.37e-5 & 1.98 & 8.44e-6 & 2.00 & 4.79e-6 & 1.99\\
$1/512$ & 2.19e-5 & 1.99 & 2.35e-5 & 2.00  & 2.11e-6 & 2.00 & 1.20e-6  & 2.00\\
\bottomrule[1.2pt]
$\tau$ & $\|e_{\psi}\|_{\infty}$ & $\text{Order}$ & $\|e_{\psi}\|_{\infty}$ & $\text{Order}$ & $\|e_{\psi}\|_{2}$ & $\text{Order}$ & $\|e_{\psi}\|_{2}$ & $\text{Order}$\\
\midrule[0.8pt]
$1/64$ & 1.17e-2 & - & 1.05e-2  &-   & 7.90e-4 & - & 7.12e-4 &-\\
$1/128$ & 3.07e-3 & 1.97 & 2.67e-3 & 1.98 & 2.02e-4 & 1.97 & 1.82e-4 & 1.97\\
$1/256$ & 7.57e-4 & 1.99 & 6.75e-4 & 1.99 & 5.08e-5 & 1.99 & 4.57e-5 & 1.99\\
$1/512$ & 1.89e-4 & 2.00 & 1.60e-4 & 2.00 & 1.27e-5 & 2.00 & 1.14e-5 & 2.00\\
\bottomrule[1.5pt]
\end{tabular}
\label{l_2 temporal errors}
\end{table}

In the accuracy test for temporal discretization, the mesh sizes are fixed as $h=1/256$.  The numerical solutions are computed using the second-order scheme with different time steps $\tau=1/64$, $1/128$, $1/256$, $1/512$, $1/1024$ at $t=0.5$ and $t=1$. As shown in Table \ref{l_2 temporal errors}, we can see that the scheme  \eqref{second order scheme3 1}--\eqref{second order scheme3 BC3} achieve  second-order accuracy in time in the sense of the norms $l_\infty$ and $l_2$.

\subsection{Numerical validation of stability, algebraic consistency, volume conservation, and Neumann triangle condition}\label{stability_algebraic consistency_volume-conservation}
In this subsection, we numerically validate the energy stability, algebraic consistency, volume conservation and asymptotic properties such as the Neumann angle condition of the ternary DBPF model. Unless otherwise specified, the parameters for the second-order scheme are set as $\varepsilon=0.01$, $h=1/400$, $\tau=0.01$, $m_1=m_2=1e-4$, $A_1=B_1=A_2=B_2=100$.

\subsubsection{Stability}
Using the initial conditions \eqref{initial_conditions_square},  Fig. \ref{energy evolution} shows that the energies of the proposed second-order scheme \eqref{second order scheme3 1}--\ref{second order scheme3 BC3} always decay in time with different surface tension parameters $(\sigma_{23},\sigma_{12},\sigma_{13})=(1,1,1),(1,2,2),(0.6,1,0.6),(1,0.8,1.4)$. It is also remarkable that the decay rates of the total free energies show clear differences, indicating the influence in dissipation by the surface tension parameters.  
\begin{figure}[htb]
   \centering
   \includegraphics[width=0.5\linewidth]{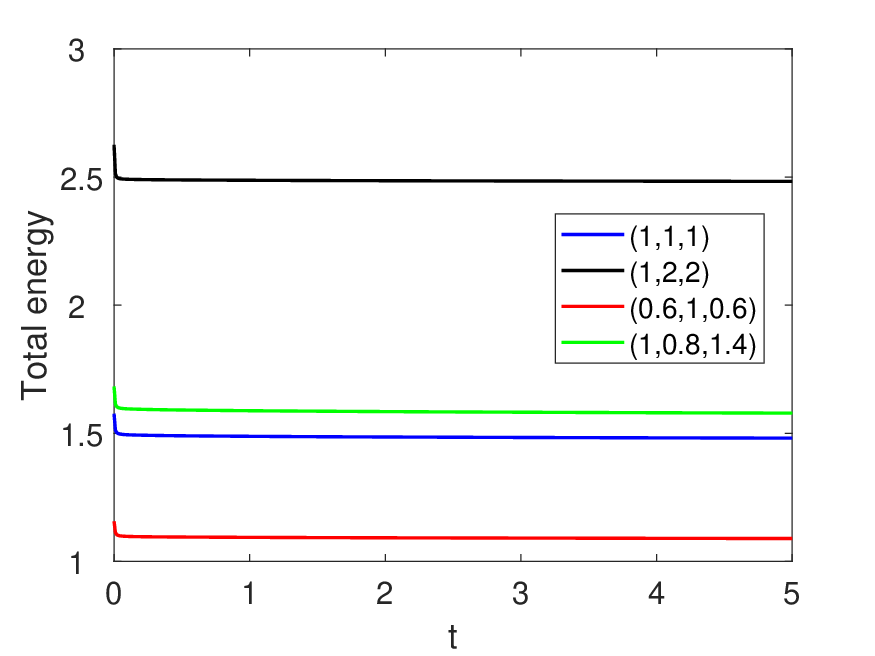}
   \caption{
   The evolution of the total energies obtained from second-order scheme with  $(\sigma_{23},\sigma_{12},\sigma_{13})=(1,1,1),(1,2,2),(0.6,1,0.6),(1,0.8,1.4)$.}\label{energy evolution}
   \end{figure}

\subsubsection{Algebraic consistency}
To validate the algebraic consistency of the ternary DBPF model, we take the initial conditions with phase 1 absent. That is,
   \begin{align*}
   \psi(x,y)=1,\qquad
   \varphi(x,y)=\tanh\bigg(\frac{\sqrt{(x-0.5)^2/1.7+(y-0.5)^2}-0.2 }{\varepsilon} \bigg).
   \end{align*}
As shown in Figs. \ref{algebraic consistency 1} (a) and (b), $\psi$ remains 1 everywhere during relaxation of $\varphi$, indicating the non-nucleation of phase 1 at the interface between phase 2 and phase 3. Finally, the interface between phase 2 and phase 3 becomes a circle. Therefore, the results obtained from the newly proposed model are physically meaningful. 
  \begin{figure}[!]
   \centering
   \begin{subfigure}{0.45\linewidth}
   \centering
   \includegraphics[trim=0cm 0cm 0cm 0cm,clip,width=\linewidth]{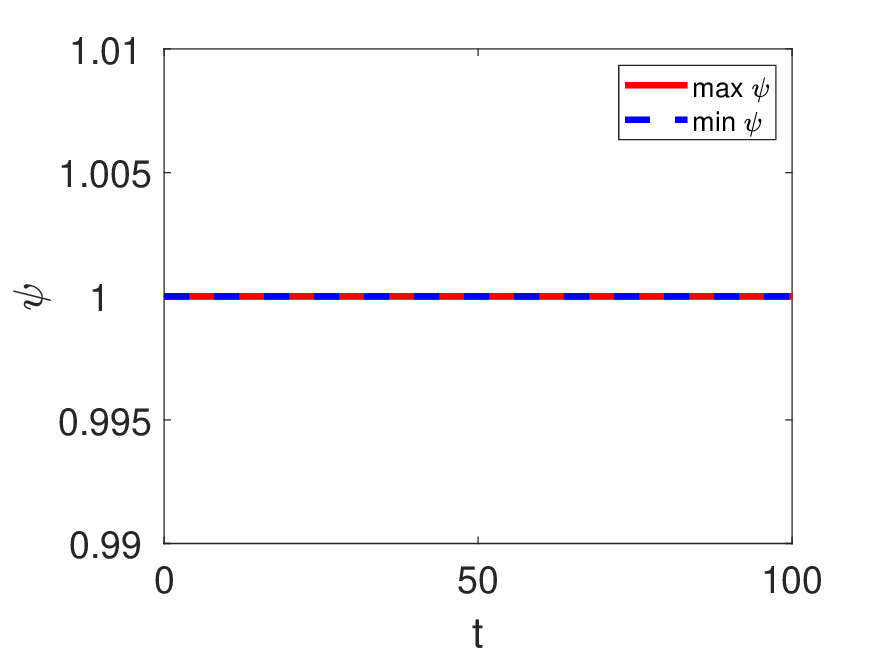}
   \subcaption{}
   \end{subfigure}
   \begin{subfigure}{0.4\linewidth}
   \centering
   \includegraphics[trim=1cm 0cm 1cm 0cm,clip,width=\linewidth]{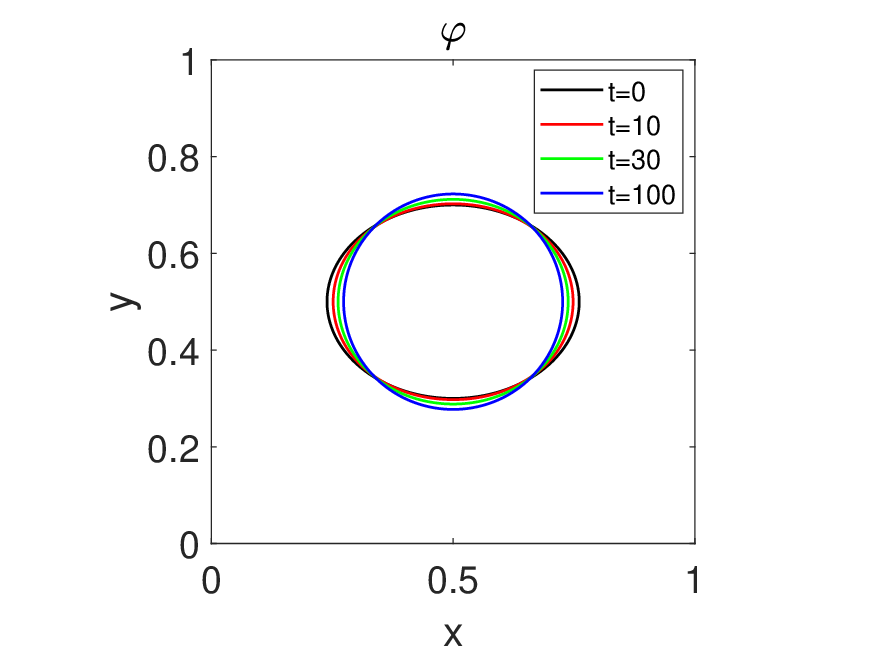}
   \subcaption{}
   \end{subfigure}
   \caption{(a) The maximum and minimum values of $\psi$ during the evolution process. (b) The interface plots of $\varphi$ at $t=$ 0, 10, 30, 100.}\label{algebraic consistency 1}
   \end{figure}
   
\subsubsection{Volume conservation}
For the DBPF model with degenerate mobility $M_2=m_2(\frac{1+\psi}{2})^8$, we numerically validate the volume conservation of each phase. The volume $V_i$ of each phase ($i=1,2,3$) is computed using the following approximation formulae:
\begin{align*}
    V_1=\int_{\Omega}\frac{1-\psi}{2}
    \mathrm{d}\mathbf{x},\qquad
    V_2=\int_{\Omega}\frac{1+\psi}{2}
    \frac{1-\varphi}{2}\mathrm{d}\mathbf{x},\qquad
    V_3=\int_{\Omega}\frac{1+\psi}{2}
    \frac{1+\varphi}{2}\mathrm{d}\mathbf{x}.
\end{align*}
The initial conditions are set as in \eqref{initial_conditions_square}. The initial volumes of each phase are 0.5, 0.25 and 0.25 respectively. As depicted in Fig. \ref{volume conservation}, the proposed second-order scheme \eqref{second order scheme3 1}--\ref{second order scheme3 BC3} of the DBPF model are able to conserve the volume of each phase for different surface tension parameters $(\sigma_{23},\sigma_{12},\sigma_{13})$.
   \begin{figure}[htb]
   \centering
   \begin{subfigure}{0.323\linewidth}
   \centering
   \includegraphics[trim=0.2cm 0.1cm 1.1cm 0cm,clip,width=\linewidth]{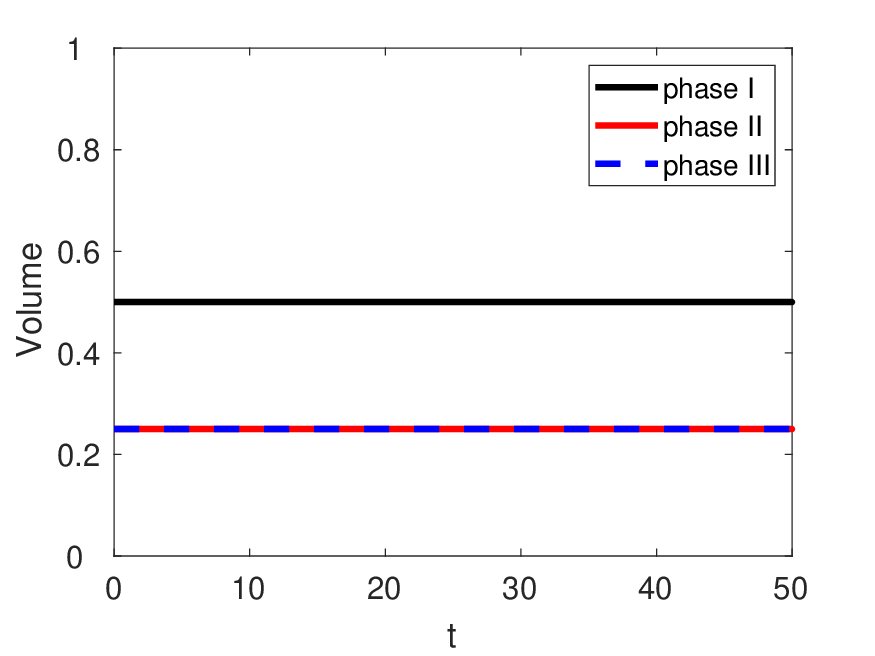}
   \subcaption{$(1,1,1)$}
   \end{subfigure}
   \begin{subfigure}{0.323\linewidth}
   \centering
   \includegraphics[trim=0.2cm 0.1cm 1.1cm 0cm,clip,width=\linewidth]{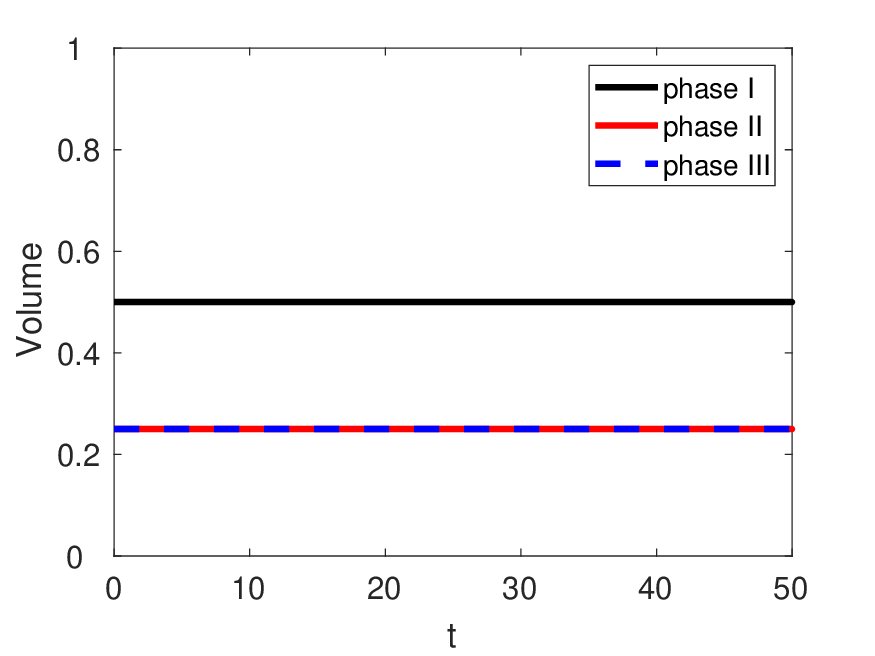}
   \subcaption{$(1,2,2)$}
   \end{subfigure}
   \begin{subfigure}{0.323\linewidth}
   \centering
   \includegraphics[trim=0.2cm 0.1cm 1.1cm 0cm,clip,width=\linewidth]{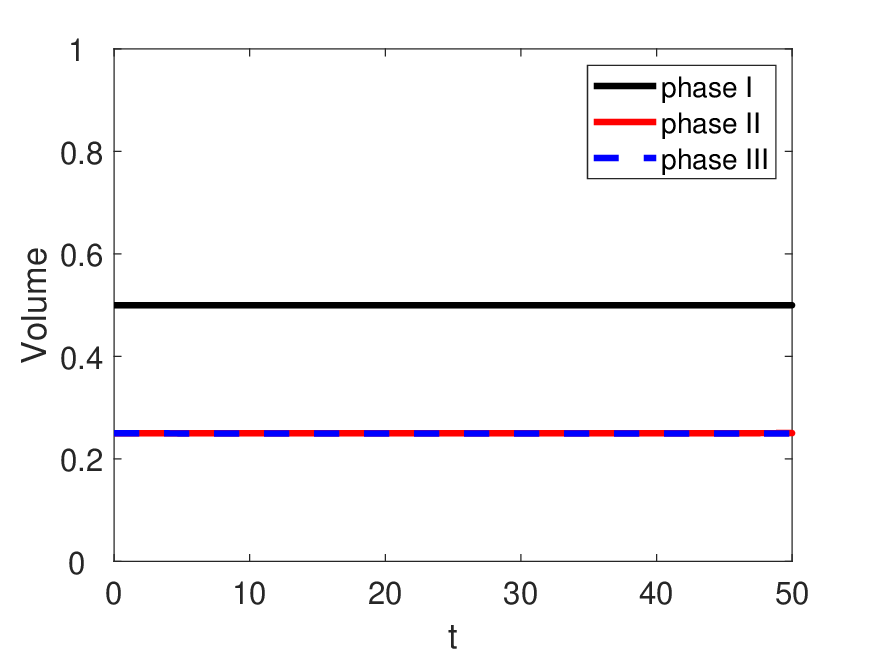}
   \subcaption{$(1,0.9,1.1)$}
   \end{subfigure}
   \caption{The volume conservation of the  second-order scheme with different surface tension parameters $(\sigma_{23},\sigma_{12},\sigma_{13})$ at $t=50$.}\label{volume conservation}
   \end{figure}
   
\subsubsection{Neumann triangle condition}\label{section: The Neumann angle condition}  
In section \ref{Sec:Sharp_interface_limit}, using the matched asymptotic analysis, we derive the Neumann triangle condition at the triple junction in equilibrium state, namely
    \begin{align*}
    \frac{\sin(\theta_{23})}{\sigma_{23}}=\frac{\sin(\theta_{12})}{\sigma_{12}}=\frac{\sin(\theta_{13})}{\sigma_{13}}\qquad \text{at}\  \Gamma_1\cap\Gamma_2\cap\Gamma_3.
    \end{align*}
To numerically validate the Neumann triangle condition at the triple junction, we apply the proposed second-order scheme \eqref{second order scheme3 1}--\eqref{second order scheme3 BC3} with different surface tension parameters $(\sigma_{23},\sigma_{12},\sigma_{13})=(0.6,1,1),(1,1,1),(1,2,2)$ to perform the numerical experiment until $t=50$ to ensure that the system reaches steady state.
\begin{figure}[!]
   \centering
   \begin{subfigure}{0.283\linewidth}
   \centering
   \includegraphics[trim=2.3cm 1cm 2.3cm 0cm,clip,width=\linewidth]{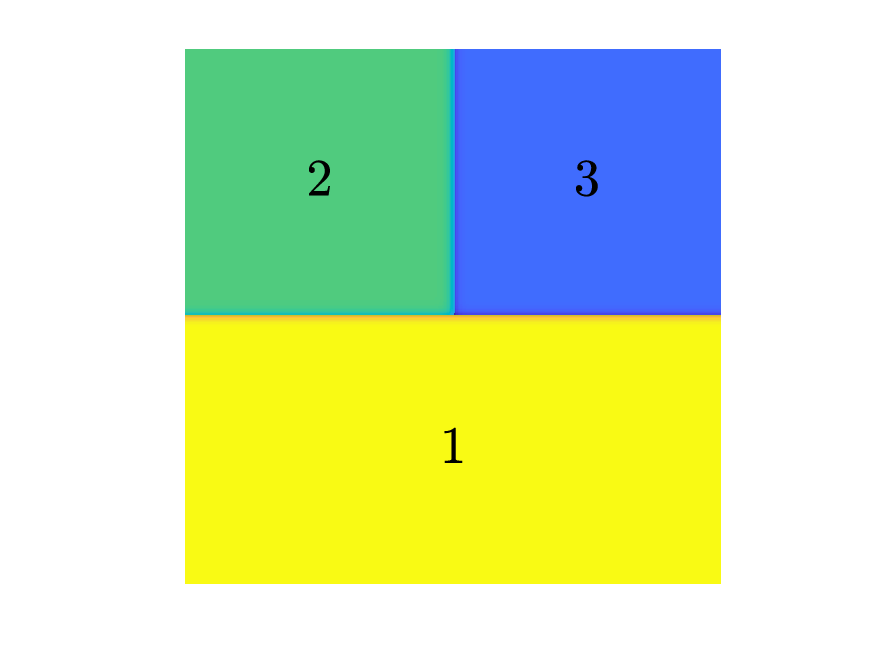}
   \subcaption{}
   \end{subfigure}
   \begin{subfigure}{0.265\linewidth}
   \centering
   \includegraphics[trim=0cm 0cm 0cm 0cm,clip,width=\linewidth]{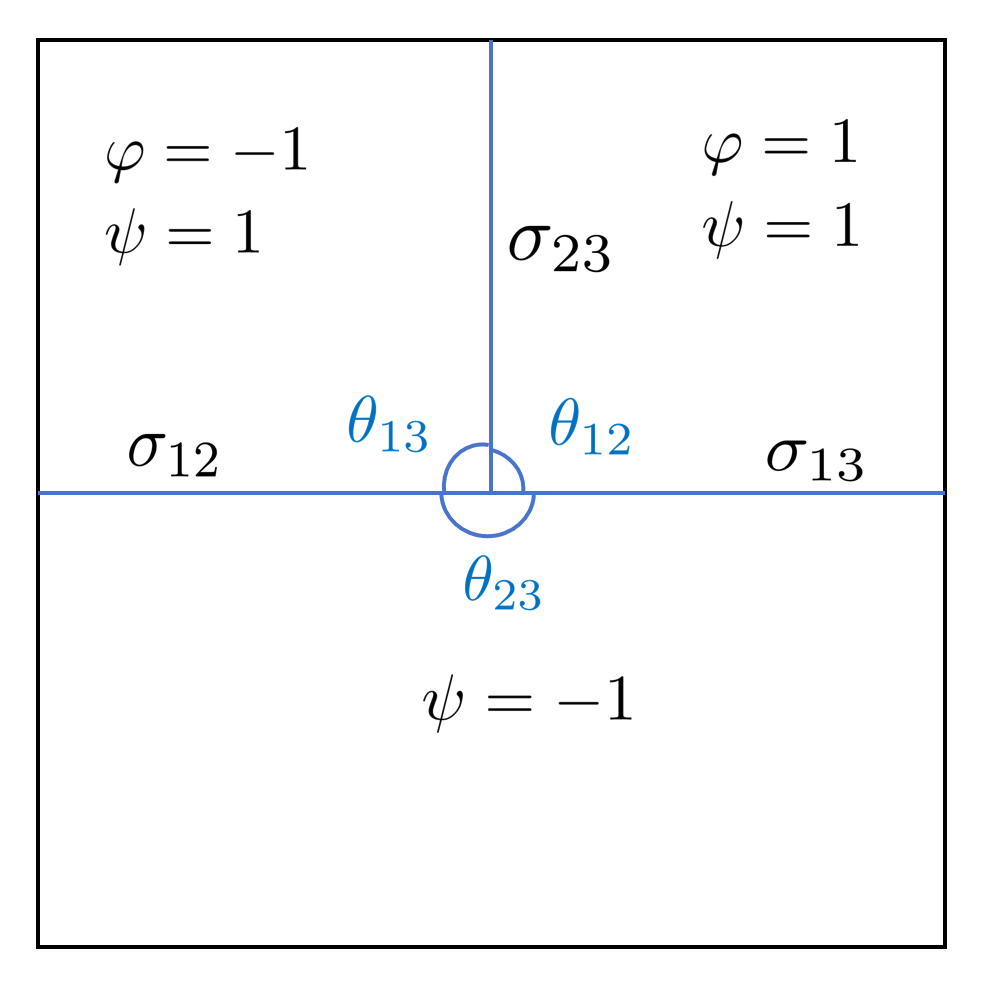}
   \subcaption{}
   \end{subfigure}
   \begin{subfigure}{0.265\linewidth}
   \centering
   \includegraphics[trim=0cm 0cm 0cm 0cm,clip,width=\linewidth]{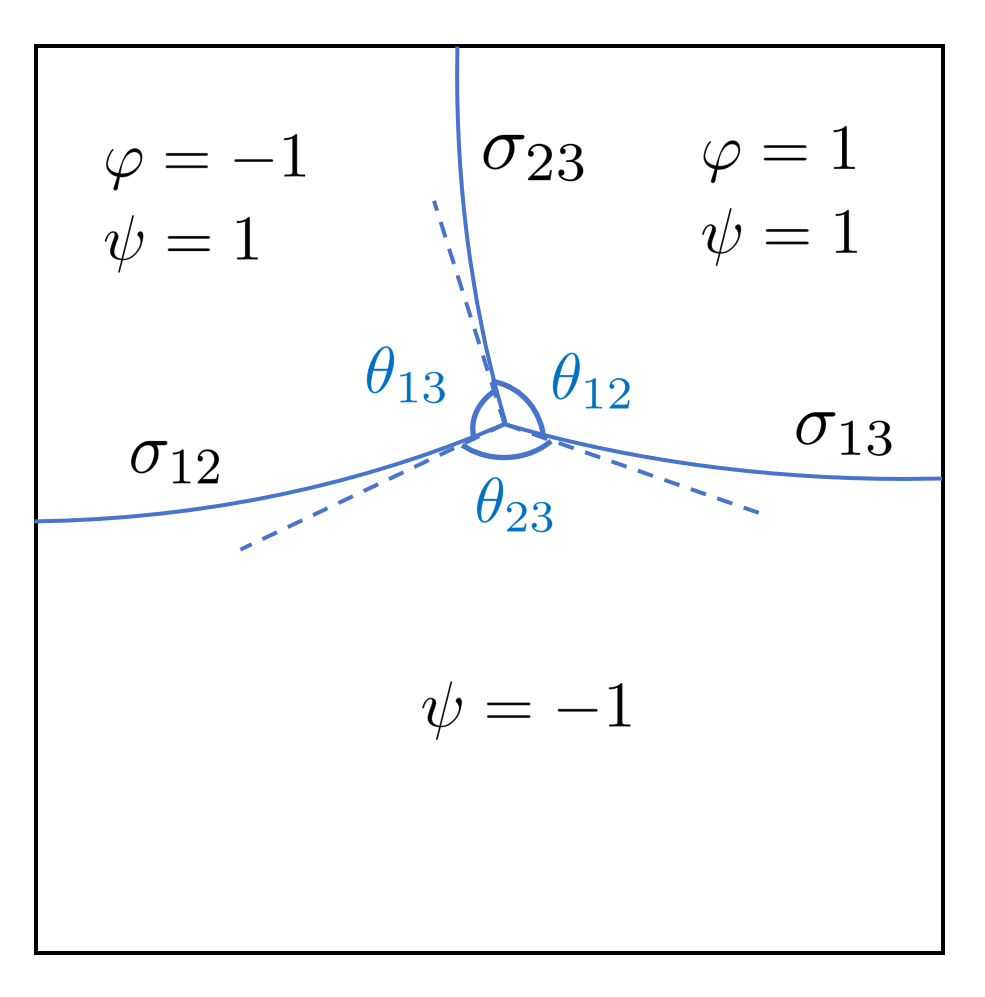}
   \subcaption{}
   \end{subfigure}
   \caption{(a): the initial configuration of phase 1 (yellow part), phase 2 (green part) and phase 3 (blue part). (b) and (c): the corresponding surface tension parameters on the interface and the contact angles between any two interfaces of the initial  and equilibrium configurations, respectively.}\label{figure of the Neumann angle}
   \end{figure}
As shown in Fig. \ref{figure of the Neumann angle} (c), we compute the apparent contact angles ($\theta_{23}$, $\theta_{12}$, $\theta_{13}$) by finding the asymptotes of three interfaces near the triple junction. It is worth noting that the three asymptotes should intersect at the same point, known as the triple junction, as $\varepsilon\to 0$. It can be seen in Table \ref{contact angles} that the computed apparent angles $(\theta_{23},\theta_{12},\theta_{13})$ and the theoretically predicted ones from the Neumann triangle condition coincide within an acceptable error range.

   \begin{table}[!]
   \centering
   \caption{The apparent contact angles at the triple junction with different surface tensions parameters.}
   \begin{tabular}{@{}>{\centering\arraybackslash}ccc}
   \toprule[1.5pt]
   {\footnotesize$(\sigma_{23},\sigma_{12},\sigma_{13})$} & $\text{numerical}\ ( \theta_{23},\theta_{12},\theta_{13})$ & $\text{theoretical}\ ( \theta_{23},\theta_{12},\theta_{13})$\\
   \midrule[0.8pt]
   $(0.6,1,1)$ & $(146.72^{\circ},106.64^{\circ},106.64^{\circ})$ & $(145.08^{\circ},107.46^{\circ},107.46^{\circ})$\\
   $(1,1,1)$ & $(122.92^{\circ},118.54^{\circ},118.54^{\circ})$ & $(120^{\circ},120^{\circ},120^{\circ})$\\
   $(1,2,2)$ & $(150.74^{\circ},104.63^{\circ},104.63^{\circ})$ & $(151.04^{\circ},104.48^{\circ},104.48^{\circ})$\\
   \bottomrule[1.5pt]
   \end{tabular}
   \label{contact angles}
   \end{table}

\subsection{Liquid lens between two stratified fluids}\label{Liquid lens between two stratified fluids}
In this example, we investigate numerically the partial spreading and total spreading of the liquid lens between two stratified fluids. The spreading coefficient of phase $i$ is defined by $S_i=\sigma_{ij}+\sigma_{ik}-\sigma_{jk},(i=1,2,3)$ \cite{boyer2006study, boyer2011numerical}, which indicates the partial spreading if $S_i>0$ for all $i$ and the total spreading if $S_i<0$ for some $i$. We label the regions inside and outside the droplets by $\{\psi=-1\}$ and $\{\psi=1\}$, respectively. Then, we introduce $\{\varphi=-1\}$ and $\{\varphi=1\}$ to characterize the lower and upper fluids. In summary, the initial condition is set as follows:
   \begin{align*}
   \psi(x,y)=\tanh\Big(\frac{\sqrt{(x-0.5)^2+(y-0.5)^2}-0.15 }{\varepsilon} \Big),\quad
   \varphi(x,y)=\tanh\Big(\frac{y-0.5}{\varepsilon}\Big),
   \end{align*}
which indicates that the liquid lens is initially spherical and is located between the other two phases.  The parameters of this example are set as $\varepsilon=0.02$, $h=1/400$, $\tau=0.01$, $m_1=m_2=1e-3$, $A_1=B_1=A_2=B_2=100$.

   \begin{figure}[htp]
   \begin{subfigure}{0.23\linewidth}
   \centering
   \includegraphics[trim=3.3cm 0cm 2.7cm 0cm,clip,width=1\linewidth]{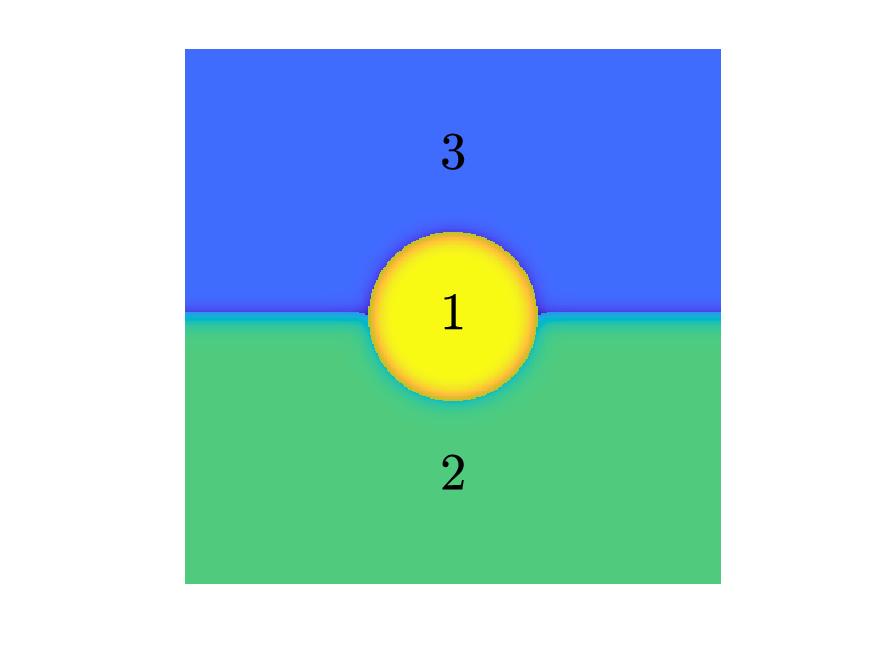}
   \end{subfigure}
   \begin{subfigure}{0.23\linewidth}
   \centering
   \includegraphics[trim=3.3cm 0cm 2.7cm 0cm,clip,width=1\linewidth]{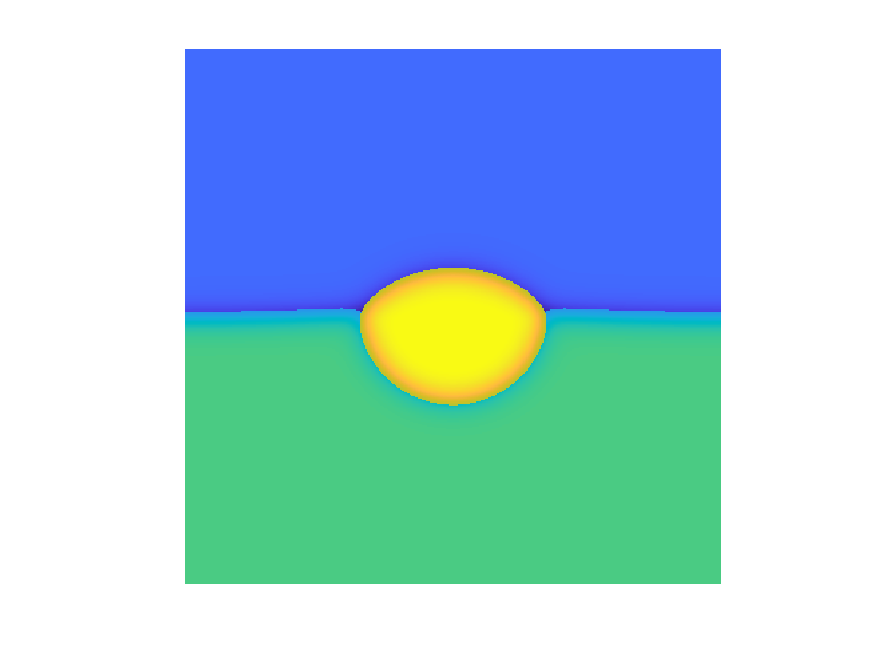}
   \end{subfigure}
   \centering
   \begin{subfigure}{0.23\linewidth}
   \centering
   \includegraphics[trim=3.3cm 0cm 2.7cm 0cm,clip,width=1\linewidth]{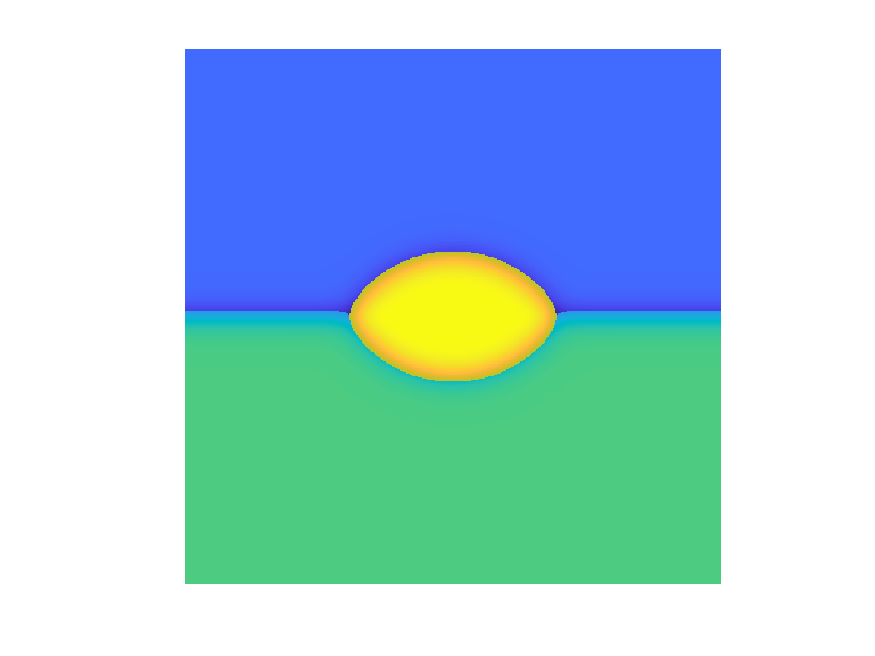}
   \end{subfigure}
   \begin{subfigure}{0.23\linewidth}
   \centering
   \includegraphics[trim=3.3cm 0cm 2.7cm 0cm,clip,width=1\linewidth]{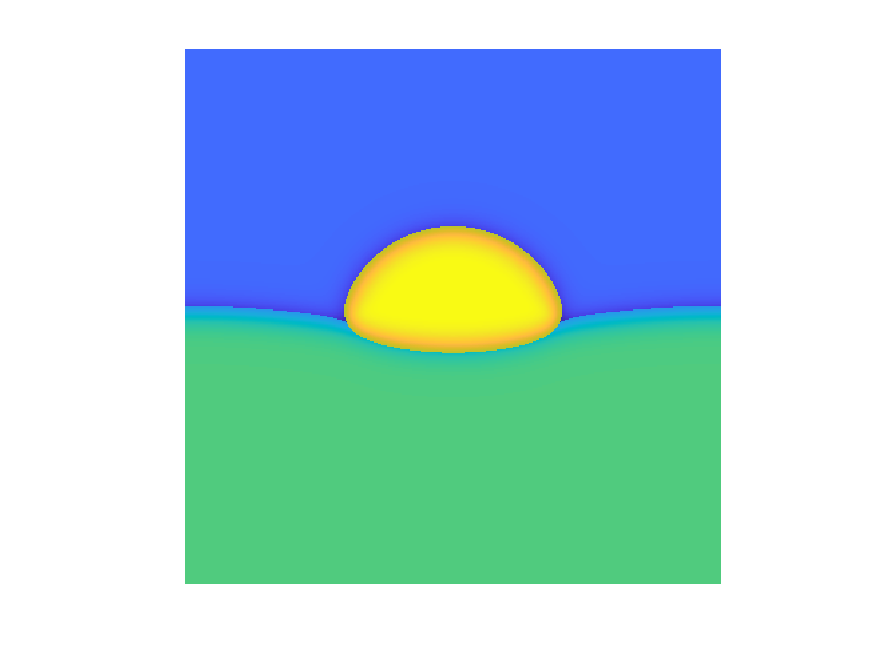}
   \end{subfigure}
   \caption{The initial configuration and  the equilibrium configuration of the liquid lens with different surface tension parameters $(\sigma_{23},\sigma_{12},\sigma_{13})=(1,1,1.4)$, $(1,1,1), (1,1,0.2)$ at $t=50$.}\label{The dynamical behaviors of initial condition 2}
   \end{figure}

As illustrated in Remark \ref{remark_triangle_law}, in the partial spreading case with $S_i>0$ for all $i$, a closed force triangle can be formed and thus the force balances hold at triple junctions when the system reaches steady state. As shown in Fig. \ref{The dynamical behaviors of initial condition 2}, in the three partial spreading cases with $(\sigma_{23},\sigma_{12},\sigma_{13})=(1,1,1.4),(1,1,1),(1,1,0.2)$, the yellow drop may shift downwards, stay in the middle, or shift upwards, due to the unbalanced forces at triple junction points. It can be observed that equilibrium states always exist with Neumann triangle condition held at the two triple junctions, whereas the interfaces become circular eventually. The total interfacial area weighted by surface tension coefficients is minimized in equilibrium.
\begin{figure}[htp]
   \hspace{2mm}
   \begin{subfigure}{0.23\linewidth}
   \centering
   \includegraphics[trim=3.5cm 1cm 2.5cm 1cm,clip,width=1\linewidth]{liquid_lens/311_initial0.eps}
   \end{subfigure}
   \begin{subfigure}{0.23\linewidth}
   \centering
   \includegraphics[trim=3.5cm 1cm 2.5cm 1cm,clip,width=1\linewidth]{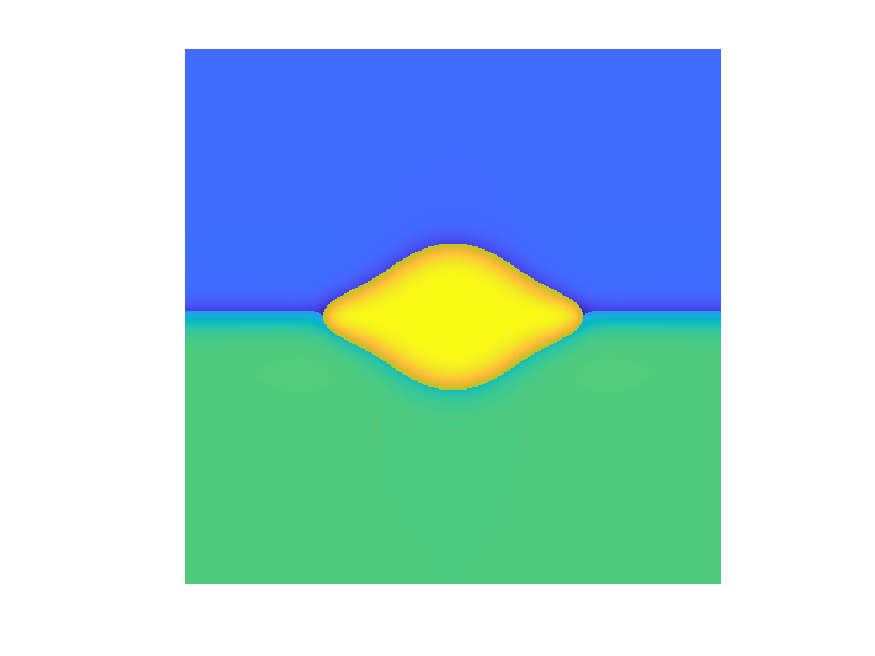}
   \end{subfigure}
   \centering
   \begin{subfigure}{0.23\linewidth}
   \centering
   \includegraphics[trim=3.5cm 1cm 2.5cm 1cm,clip,width=1\linewidth]{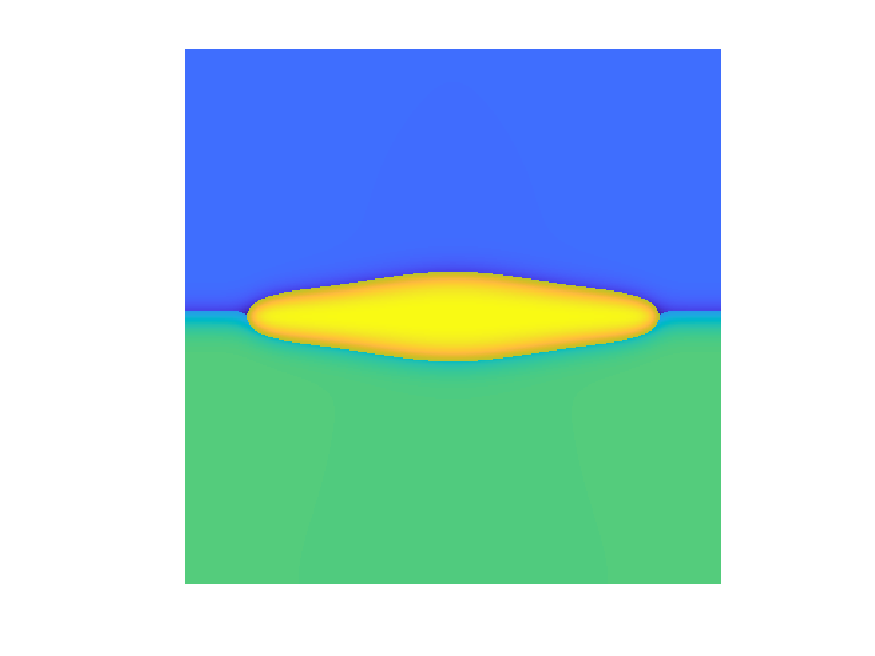}
   \end{subfigure}
   \begin{subfigure}{0.23\linewidth}
   \centering
   \includegraphics[trim=3.5cm 1cm 2.5cm 1cm,clip,width=1\linewidth]{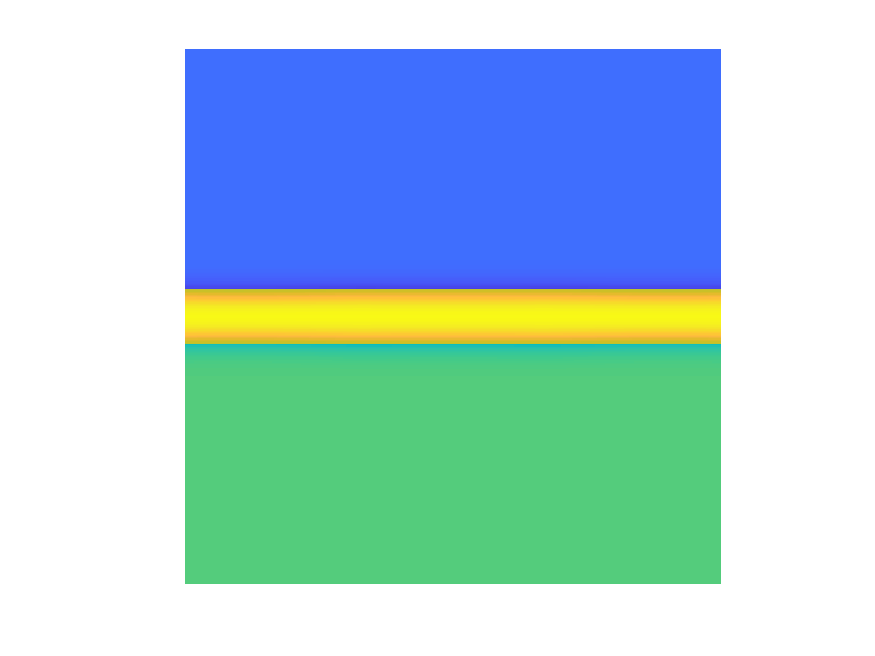}
   \end{subfigure}
   \vspace{-9pt}
   \caption*{$(a)\ (\sigma_{23},\sigma_{12},\sigma_{13})=(3,1,1)$.}

   \hspace{2mm}
   \begin{subfigure}{0.23\linewidth}
   \centering
   \includegraphics[trim=3.5cm 1cm 2.5cm 1cm,clip,width=1\linewidth]{liquid_lens/311_initial0.eps}
   \end{subfigure}
   \begin{subfigure}{0.23\linewidth}
   \centering
   \includegraphics[trim=3.5cm 1cm 2.5cm 1cm,clip,width=1\linewidth]{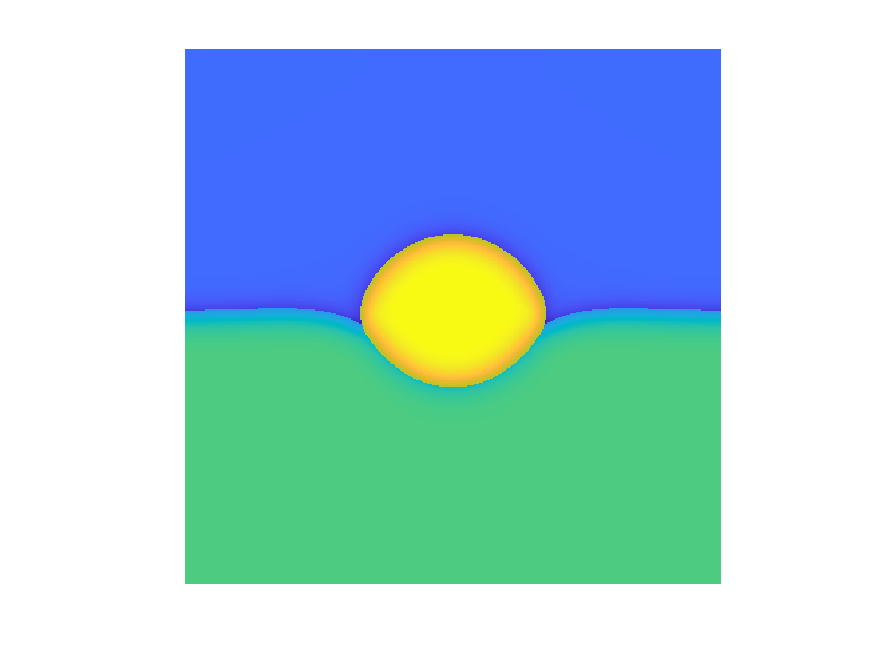}
   \end{subfigure}
   \centering
   \begin{subfigure}{0.23\linewidth}
   \centering
   \includegraphics[trim=3.5cm 1cm 2.5cm 1cm,clip,width=1\linewidth]{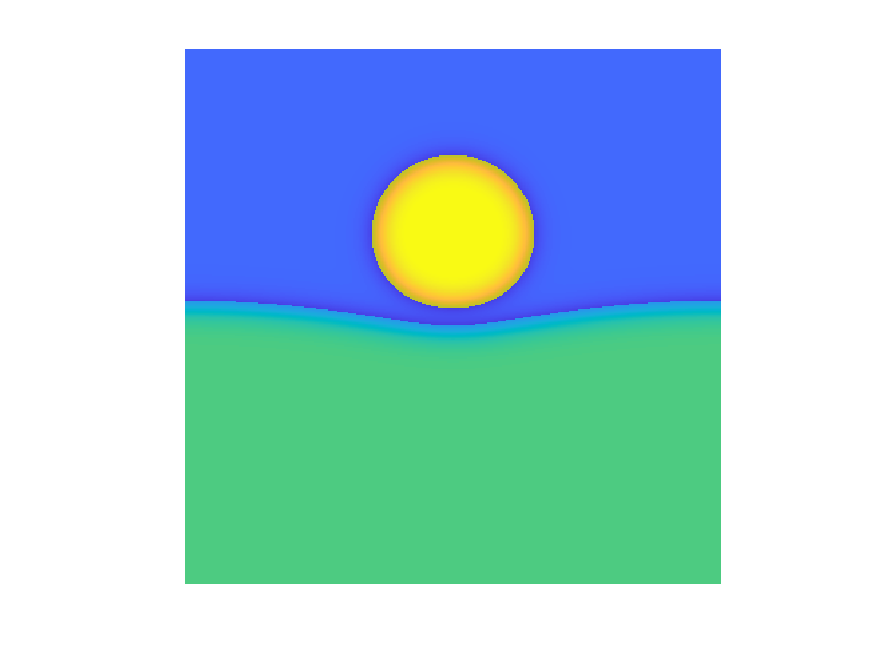}
   \end{subfigure}
   \begin{subfigure}{0.23\linewidth}
   \centering
   \includegraphics[trim=3.5cm 1cm 2.5cm 1cm,clip,width=1\linewidth]{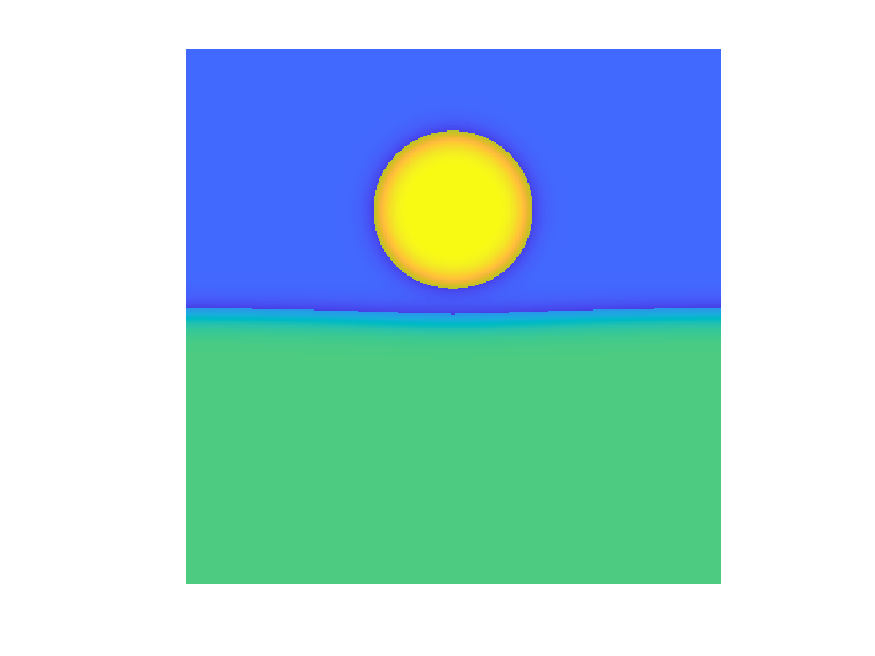}
   \end{subfigure}
   \vspace{-9pt}
   \caption*{$(b)\ (\sigma_{23},\sigma_{12},\sigma_{13})=(1,3,1)$.}
   \caption{The initial configuration (1st colomn) and the dynamic behaviors of the liquid lens for the total spreading case $(\sigma_{23},\sigma_{12},\sigma_{13})=(3,1,1)$ at $t=0.5, 2.5, 50$ and $(\sigma_{23},\sigma_{12},\sigma_{13})=(1,3,1)$ at $t=0.5, 15, 50$.}\label{The dynamical behaviors of initial condition 3}
   \end{figure}
   
In contrast, in the total spreading case $(\sigma_{23},\sigma_{12},\sigma_{13})=(3,1,1)$ or $(1,3,1)$,
Remark \ref{remark_triangle_law} implies that no closed force triangle can be formed so that triple junctions can not be stable in equilibrium. It can be observed that in both cases of Fig. \ref{The dynamical behaviors of initial condition 3}, there are no three-phase intersections in equilibrium. When $(\sigma_{23},\sigma_{12},\sigma_{13})=(3,1,1)$, the total forces at the two triple junctions point outwards, leading to the stretching of the yellow drop until its complete spreading. In equilibrium, a configuration with sandwich layers occurs. 
When $(\sigma_{23},\sigma_{12},\sigma_{13})=(1,3,1)$, the total forces at the two triple junctions point upwards, yielding a lift of the yellow drop. We eventually see a separation of the yellow drop from the blue-green liquid interface.

\subsection{Interaction of two droplets in contact}\label{Evolution of two close-by droplets}
In this example, we numerically investigate the interaction of two droplets in contact. The two equal-sized droplets with radius $0.15$ are centered at $(0.35,0.5)$ and $(0.65,0.5)$. We label the two droplets by $\{\psi=1\}$ and the region outside it by $\{\psi=-1\}$, namely,
\begin{align*}
   \psi(x,y)=&~1-\tanh\Big(\frac{\sqrt{(x-0.65)^2+(y-0.5)^2}-0.15 }{\varepsilon}\Big)\notag\\
   &-\tanh\Big(\frac{\sqrt{(x-0.35)^2+(y-0.5)^2}-0.15 }{\varepsilon}\Big).
\end{align*}
The left and right droplets are distinguished by $\{\varphi=-1\}$ and $\{\varphi=1\}$ respectively. It is remarkable that the representation of such a configuration using $\varphi$ is not unique. For example, one can have the following alternatives:
\begin{align*}
   \varphi(x,y)=\tanh\Big(\frac{x-0.5}{\varepsilon} \Big),\quad\mbox{or}\quad
   \tanh\Big(\frac{0.15-\sqrt{(x-0.65)^2+(y-0.5)^2} }{\varepsilon}\Big).
\end{align*}
Due to the nonconvexity of the energy functional \eqref{eq:ternary_energy}, the final state as a result of the relaxation dynamics \eqref{CH11}--\eqref{CH44} depends on the choice of initial condition. In this numerical example, we are interested in stable final states which are physically meaningful. For this purpose, we will tune our initial conditions for different cases of surface tension parameters to seek such final states. 
The model parameters in the simulation are set as $\varepsilon=0.01$, $h=1/400$, $\tau=0.01$, $m_1=m_2=1e-3$, $A_1=B_1=A_2=B_2=100$.
   \begin{figure}[htp]
   \begin{subfigure}{0.23\linewidth}
   \centering
   \includegraphics[trim=3.5cm 1cm 2.5cm 1cm,clip,width=1\linewidth]{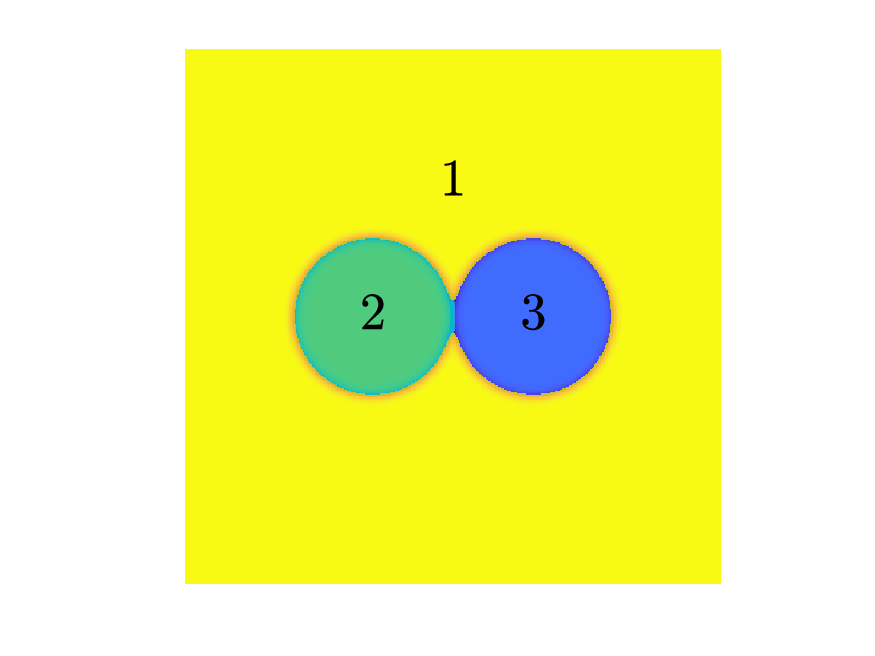}
   \end{subfigure}
   \begin{subfigure}{0.23\linewidth}
   \centering
   \includegraphics[trim=3.5cm 1cm 2.5cm 1cm,clip,width=1\linewidth]{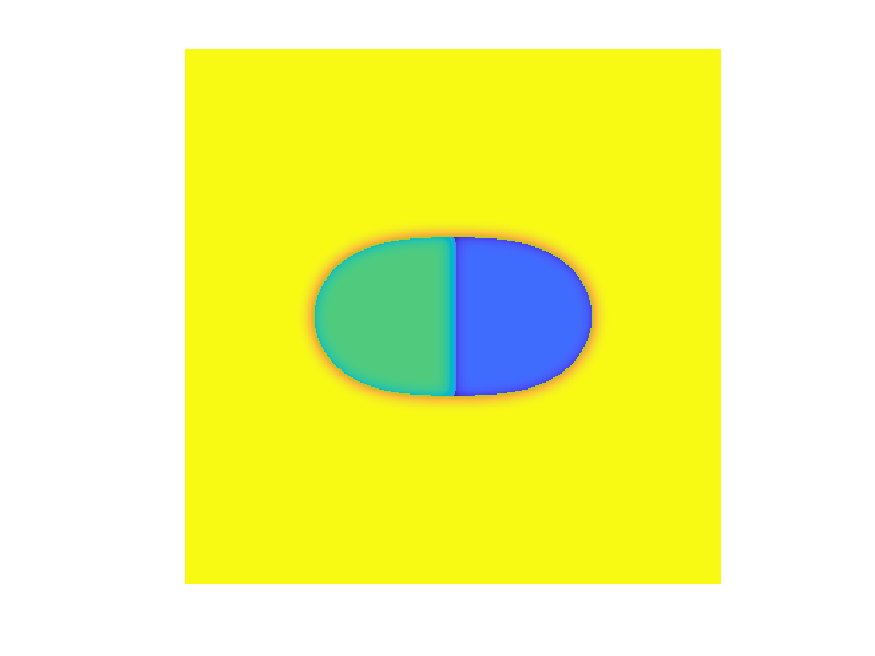}
   \end{subfigure}
   \centering
   \begin{subfigure}{0.23\linewidth}
   \centering
   \includegraphics[trim=3.5cm 1cm 2.5cm 1cm,clip,width=1\linewidth]{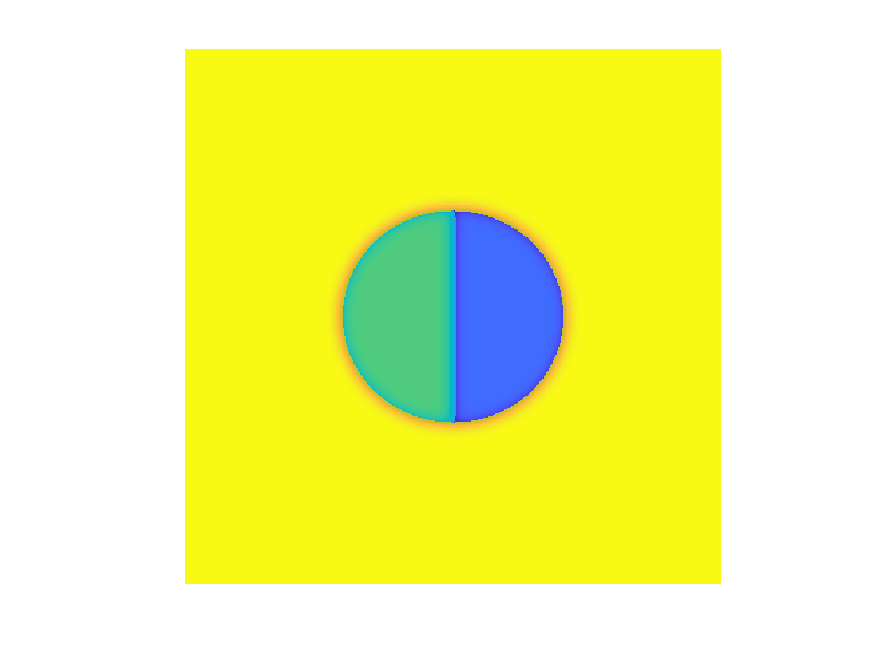}
   \end{subfigure}
   \begin{subfigure}{0.23\linewidth}
   \centering
   \includegraphics[trim=3.5cm 1cm 2.5cm 1cm,clip,width=1\linewidth]{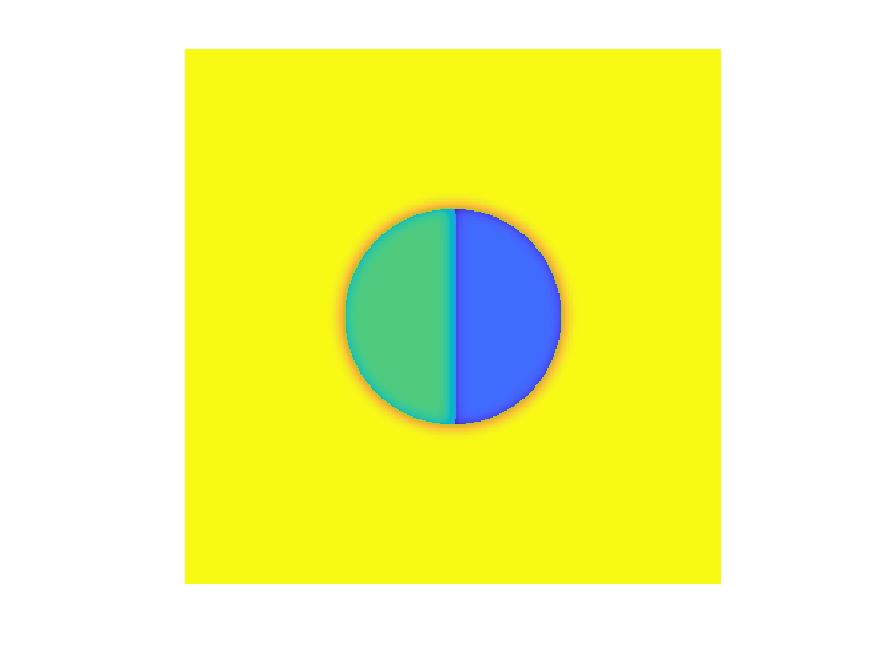}
   \end{subfigure}
   \vspace{-9pt}
   \caption*{$(a)\ (\sigma_{23},\sigma_{12},\sigma_{13})=(1,100,100)$.}
   
   \begin{subfigure}{0.23\linewidth}
   \centering
   \includegraphics[trim=3.5cm 1cm 2.5cm 1cm,clip,width=1\linewidth]{two_close_circles/two_close_circles_t0.eps}
   \end{subfigure}
   \begin{subfigure}{0.23\linewidth}
   \centering
   \includegraphics[trim=3.5cm 1cm 2.5cm 1cm,clip,width=1\linewidth]{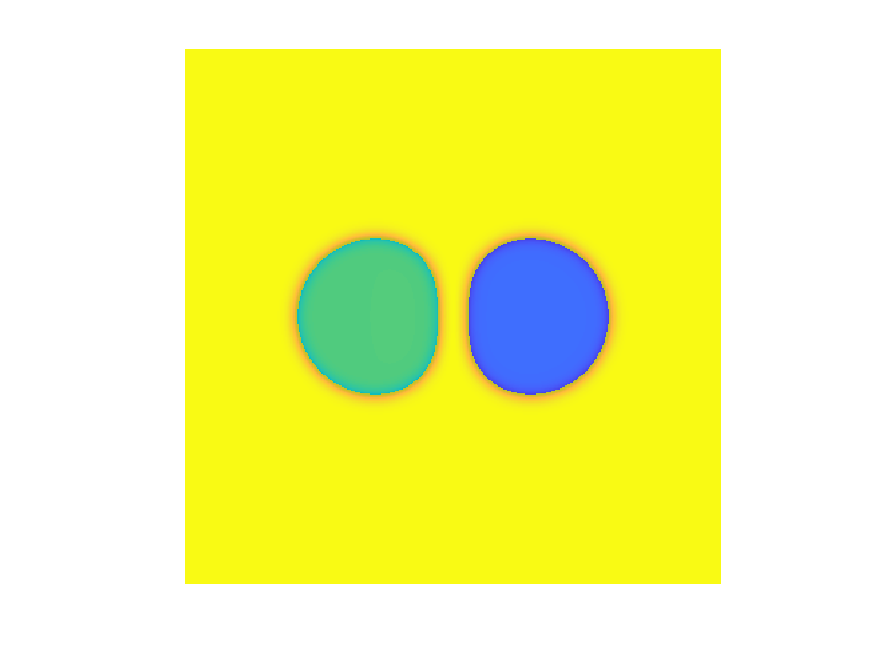}
   \end{subfigure}
   \centering
   \begin{subfigure}{0.23\linewidth}
   \centering
   \includegraphics[trim=3.5cm 1cm 2.5cm 1cm,clip,width=1\linewidth]{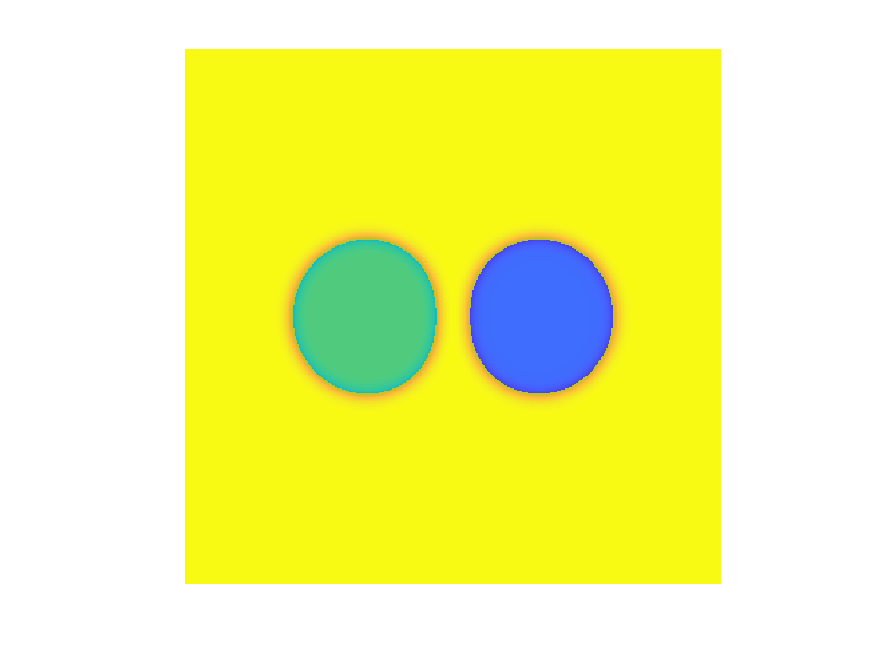}
   \end{subfigure}
   \begin{subfigure}{0.23\linewidth}
   \centering
   \includegraphics[trim=3.5cm 1cm 2.5cm 1cm,clip,width=1\linewidth]{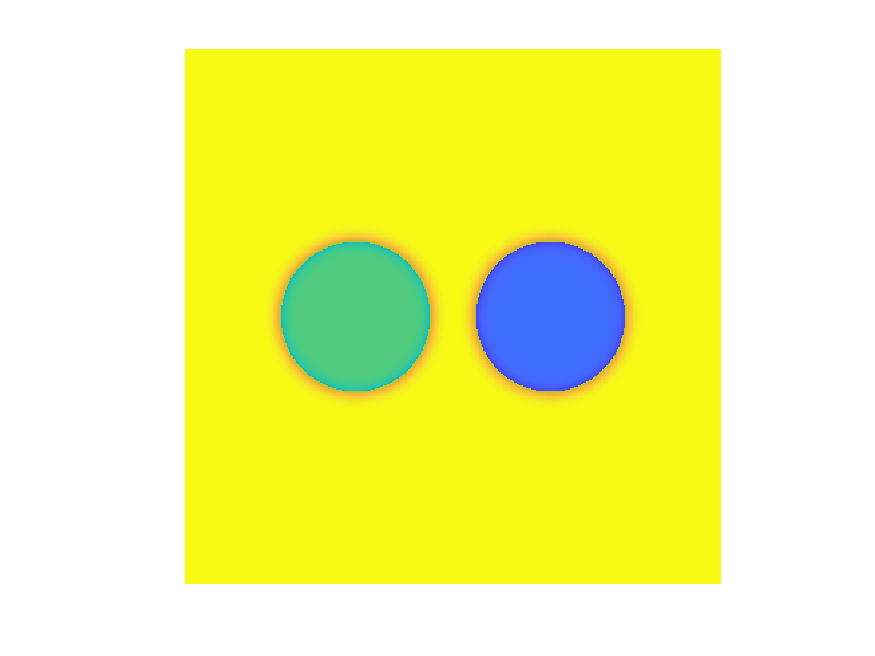}
   \end{subfigure}
   \vspace{-9pt}
   \caption*{$(b)\ (\sigma_{23},\sigma_{12},\sigma_{13})=(3,1,1)$.}
   
   \begin{subfigure}{0.23\linewidth}
   \centering
   \includegraphics[trim=3.5cm 1cm 2.5cm 1cm,clip,width=1\linewidth]{two_close_circles/two_close_circles_t0.eps}
   \end{subfigure}
   \begin{subfigure}{0.23\linewidth}
   \centering
   \includegraphics[trim=3.5cm 1cm 2.5cm 1cm,clip,width=1\linewidth]{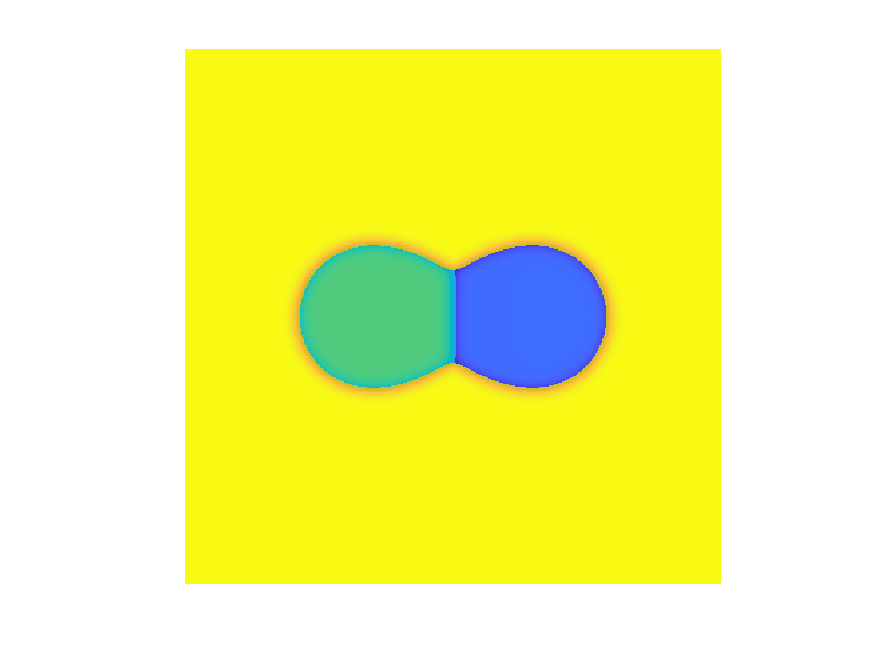}
   \end{subfigure}
   \centering
   \begin{subfigure}{0.23\linewidth}
   \centering
   \includegraphics[trim=3.5cm 1cm 2.5cm 1cm,clip,width=1\linewidth]{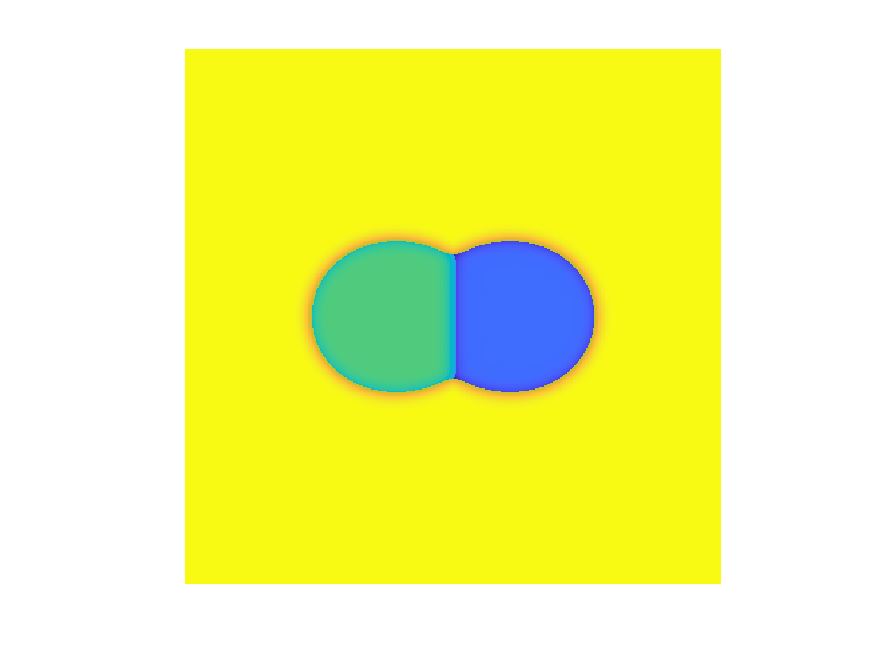}
   \end{subfigure}
   \begin{subfigure}{0.23\linewidth}
   \centering
   \includegraphics[trim=3.5cm 1cm 2.5cm 1cm,clip,width=1\linewidth]{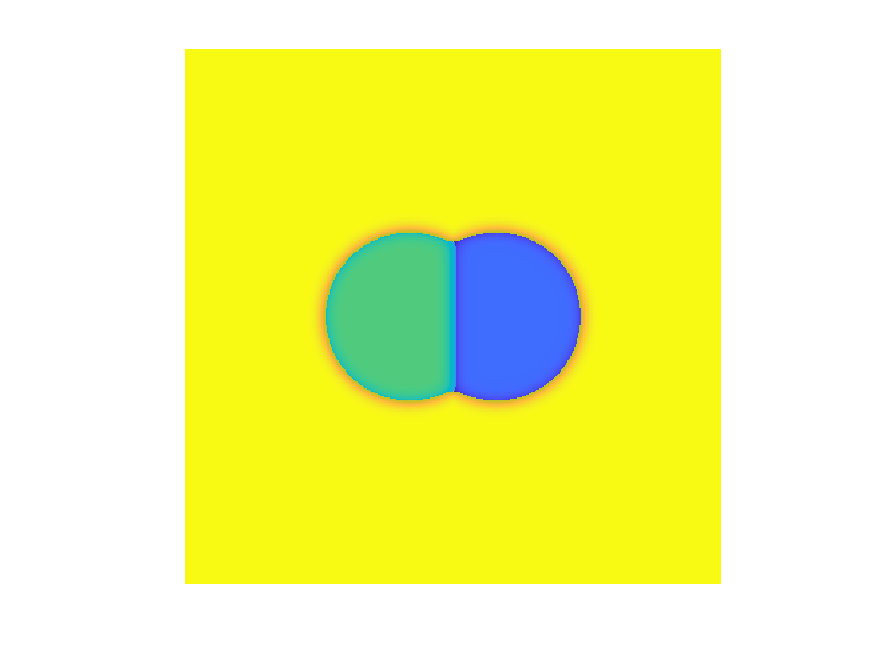}
   \end{subfigure}
   \vspace{-9pt}
   \caption*{$(c)\ (\sigma_{23},\sigma_{12},\sigma_{13})=(1,1,1)$.}
   \caption{The initial configuration (1st column) and long time behavior of two close-by droplets with three different surface tension parameters: $(\sigma_{23},\sigma_{12},\sigma_{13})=(1,100,100), (3,1,1), (1,1,1)$ at $t=1$ (2nd column), $t=5$ (3rd column), $t=50$ (4th column).}\label{two circumscribed droplets 1}
   \end{figure}

We first consider formation of symmetric compound droplets by taking $(\sigma_{23},\sigma_{12},\sigma_{13})=(1,100,100), (3,1,1)$ or $(1,1,1)$. To this end, we naturally choose an initial condition of $\varphi$ with symmetric separation:
\begin{align*}   \varphi(x,y)=&~\tanh\Big(\frac{x-0.5}{\varepsilon} \Big).
\end{align*}
Due to the finite interface thickness in phase-field model, the two droplets initially get in touch with overlapping interfacial region in the middle.   
For the partial spreading cases $(\sigma_{23},\sigma_{12},\sigma_{13})=(1,100,100)$ or $(1,1,1)$, triple junctions can exist as stable equilibrium. The total forces acting at the overlapping region of the two droplets stretch the compound droplet upwards and downwards in the vertical direction. Eventually a symmetric compound droplet is formed, with nearly-circular shape (Fig. \ref{two circumscribed droplets 1} (a)) or trisected $120^\circ$ contact angles at the triple junctions (Fig. \ref{two circumscribed droplets 1} (c)). 
Such final configurations exhibit similar features as Janus emulsions, which are bi-phasic droplets containing two immiscible liquid phases in distinct hemispheres within a continuous medium \cite{Xu2021}. 
For the total spreading case $(\sigma_{23},\sigma_{12},\sigma_{13})=(3,1,1)$, triple junctions are not stable. The unbalanced forces at the initial overlapping region attempt to separate the compound droplet, making the two droplets fall apart. Then they get away from each other like a repulsive dipole until their pairwise interaction becomes sufficiently weak (Fig. \ref{two circumscribed droplets 1} (b)). 
The final two-eye configuration is an example of multi-core emulsions \cite{Nisisako2005}, which are colloidal systems where multiple discrete inner droplets are encapsulated within a single outer droplet.
   
We then turn to an asymmetric case with $(\sigma_{23},\sigma_{12},\sigma_{13})=(1,1,3)$, and choose the initial condition of $\varphi$ to separate the blue droplet from the rest:
\begin{align*}   \varphi(x,y)=&~\tanh\Big(\frac{0.15-\sqrt{(x-0.65)^2+(y-0.5)^2} }{\varepsilon}\Big).
   \end{align*} 
As shown in Fig. \ref{two circumscribed droplets 2}, the unbalanced forces drag the green drop to the right, coating on the surface of the blue drop. 
The final configuration exhibits an annulus shape, forming a so-called double emulsion \cite{Chao2016}. 
The numerical final configurations of compound drops qualitatively reproduce the three types of emulsions from experiments, as demonstrated in Fig. \ref{experimental works}.
    \begin{figure}[htp]
   
   \begin{subfigure}{0.23\linewidth}
   \centering
   \includegraphics[trim=3.5cm 1cm 2.5cm 1cm,clip,width=1\linewidth]{two_close_circles/two_close_circles_t0.eps}
   \end{subfigure}
   \begin{subfigure}{0.23\linewidth}
   \centering
   \includegraphics[trim=3.5cm 1cm 2.5cm 1cm,clip,width=1\linewidth]{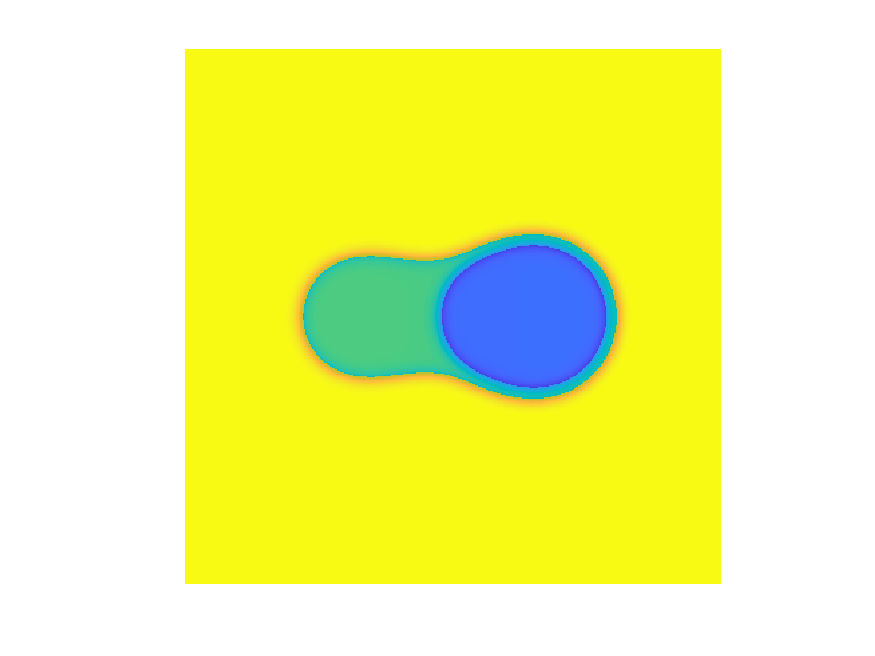}
   \end{subfigure}
   \centering
   \begin{subfigure}{0.23\linewidth}
   \centering
   \includegraphics[trim=3.5cm 1cm 2.5cm 1cm,clip,width=1\linewidth]{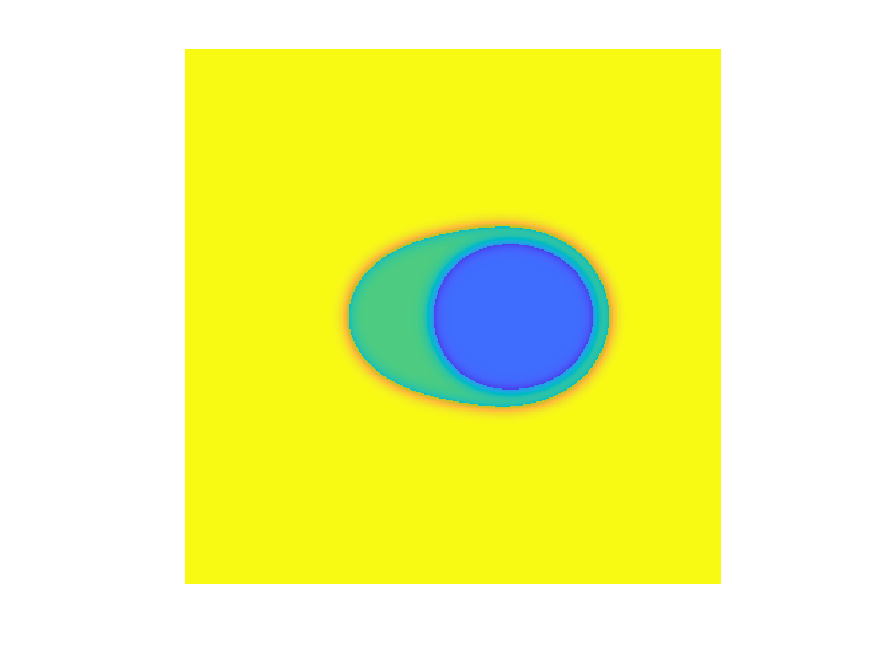}
   \end{subfigure}
   \begin{subfigure}{0.23\linewidth}
   \centering
   \includegraphics[trim=3.5cm 1cm 2.5cm 1cm,clip,width=1\linewidth]{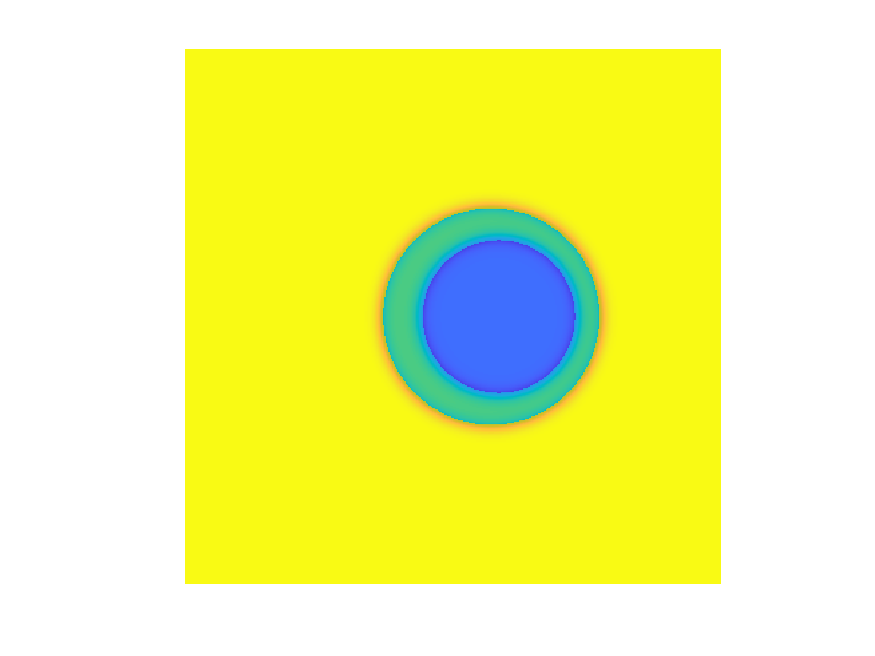}
   \end{subfigure}
   \vspace{-9pt}
   \caption{The initial configuration (1st column) and long time behavior of two close-by droplets with surface tension parameters: $(\sigma_{23},\sigma_{12},\sigma_{13})=(1,1,3)$ at $t=1$ (2nd column), $t=5$ (3rd column), $t=50$ (4th column).}\label{two circumscribed droplets 2}
   \end{figure}
   
   \begin{figure}[htp]
   \centering
   \begin{subfigure}{0.25\linewidth}
   \centering
   \includegraphics[trim=3.5cm 1cm 2.5cm 1cm,clip,width=1\linewidth]{two_close_circles/311_t50.eps}
   \end{subfigure}
   \begin{subfigure}{0.25\linewidth}
   \centering
   \includegraphics[trim=3.5cm 1cm 2.5cm 1cm,clip,width=1\linewidth]{two_close_circles/111_t50.eps}
   \end{subfigure}
   \begin{subfigure}{0.25\linewidth}
   \centering
   \includegraphics[trim=3.5cm 1cm 2.5cm 1cm,clip,width=1\linewidth]{two_close_circles/131_2_t50.eps}
   \end{subfigure}

   \centering
   \begin{subfigure}{0.25\linewidth}
   \centering
   \includegraphics[trim=0cm 0cm 0cm 0.23cm,clip,width=1\linewidth]{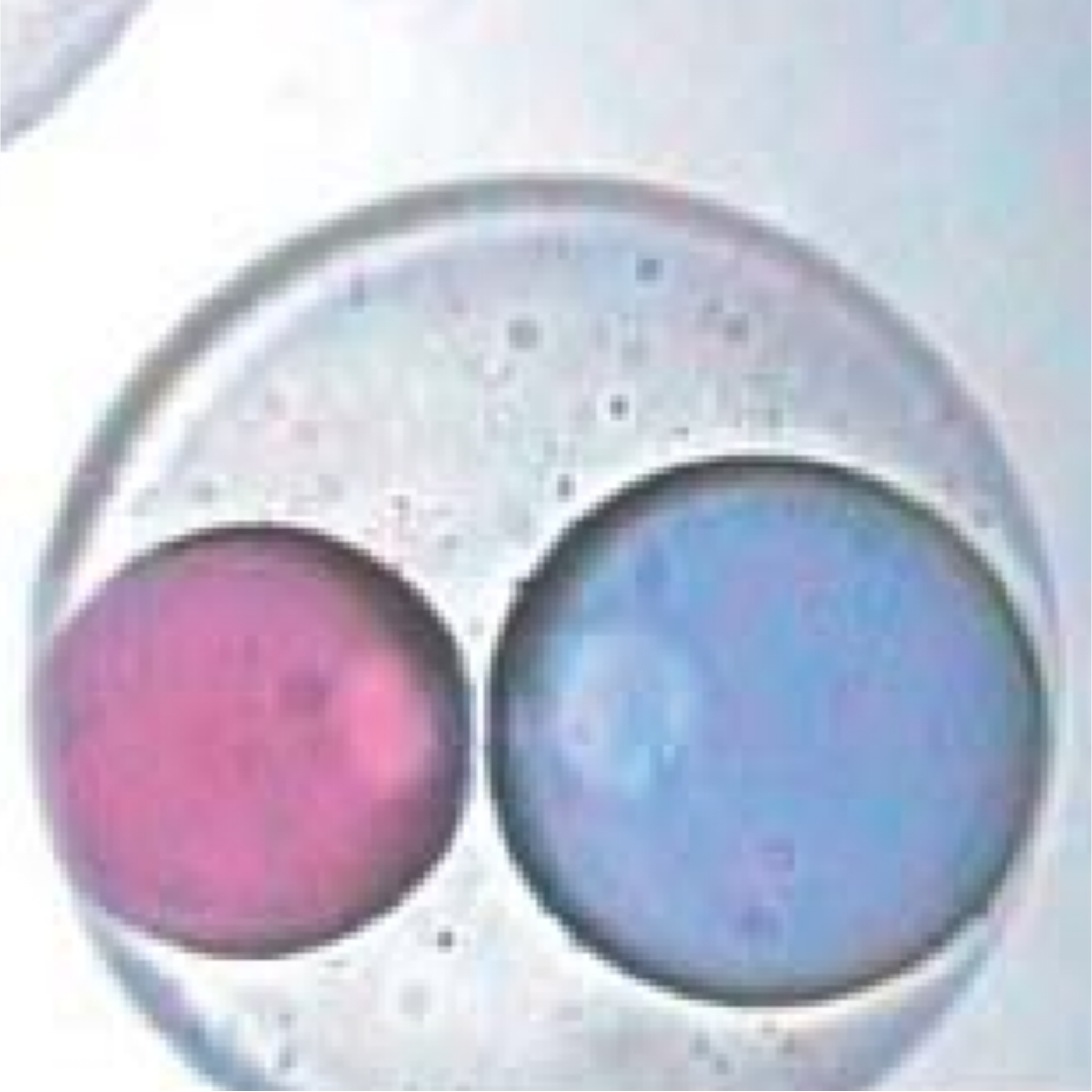}
   \caption*{(Nisisako et al, Soft Matter 2005 \cite{Nisisako2005})}
   \end{subfigure}
   \begin{subfigure}{0.25\linewidth}
   \centering
   \includegraphics[trim=0cm 0cm 0cm 0.23cm,clip,width=1\linewidth]{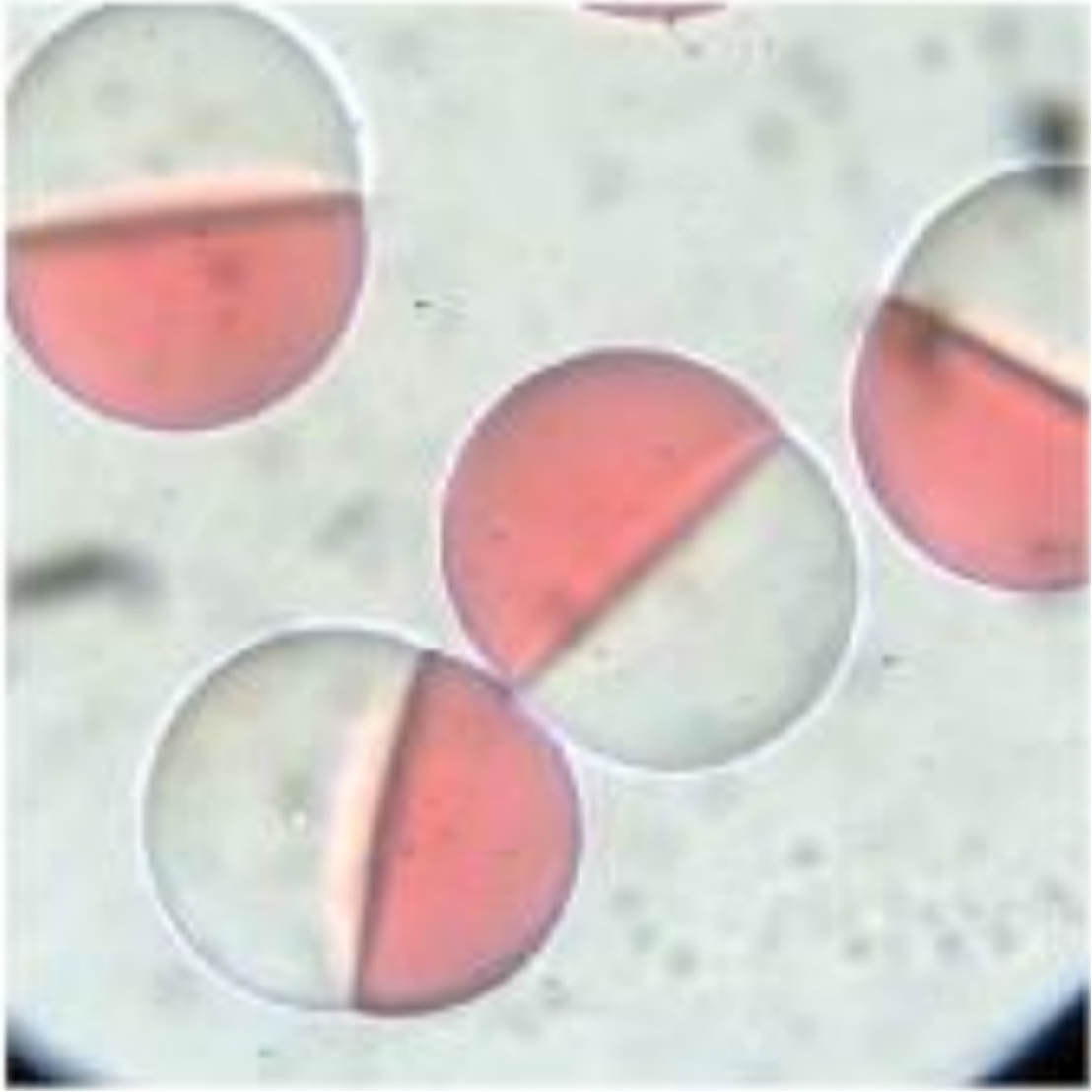}
   \caption*{(Xu and Nisisako, Micromachines 2021 \cite{Xu2021})}
   \end{subfigure}
   \begin{subfigure}{0.25\linewidth}
   \centering
   \includegraphics[trim=0.5cm 0.25cm 0cm 0.25cm,clip,width=1\linewidth]{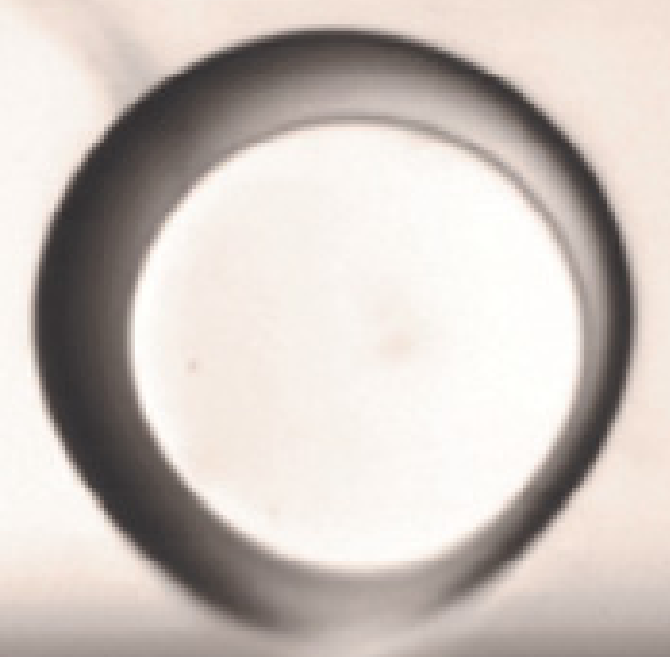}
   \caption*{(Chao et al, Appl. Phys. Lett. 2016 \cite{Chao2016})}
   \end{subfigure}
   \caption{The comparison between numerical final states and the experimental observations. Upper row: The numerical results with $(\sigma_{23},\sigma_{12},\sigma_{13})=(3,1,1), (1,1,1), (1,1,3)$ at $t=50$ from left to right; Lower row: The experimental references, from left to right, multi-core emulsions, Janus emulsions, and double emulsions.}\label{experimental works}
   \end{figure}
   
\section{Concluding remarks}\label{Sec6}
In this work, we employ a dichotomic  represent for multiphase problems and propose a dichotomy-based phase-field (DBPF) model for $N$-phase systems. In comparison to the classical characteristic-based phase-field (CBPF) model, the free energy in the DBPF model is simply an interpolation of the standard two-phase Ginzburg-Landau free energies, and thus can be easily generalized to $N$-phase problems through a recursive approach. We propose a systematic procedure to construct the surface tension functions, ensuring the mechanic, energetic, algebraic, and dynamic consistencies. Moreover, the newly proposed DBPF model eliminates the need for simplex constraints on phase-field variables, thus offering greater flexibility in the development of numerical schemes. 
The sharp-interface limit of the DBPF model is analyzed around the two-phase interfaces and the three-phase triple junctions, showing its asymptotic consistency with the corresponding sharp-interface model. In particular, we recover the Neumann triangle condition at the triple junction. 

By employing the mobility operator splitting (MOS) technique and the modified Crank-Nicolson method, we develop a second-order, linear, decoupled, and energy stable scheme for solving the ternary DBPF model, which preserves the energy decay property for the original free energy.  
Extensive numerical results are presented to validate the properties of the DBPF model and its numerical method. Benchmark simulations on the formation of liquid lenses between two stratified fluids and the generation of compound droplet emulsions show good agreement with experimental observations. The DBPF model demonstrates substantial potential for broad applications, particularly in the computational study of multiphase flow systems---a direction we will pursue in future work.

\section*{}
\appendix
\renewcommand{\thetheorem}{\Alph{section}.\arabic{theorem}}

\section{Proof of Lemma \ref{construction of f}}\label{proof of Lemma}
\begin{proof}
Firstly, we construct $f$ as follows:
\begin{align}\label{f}
f=\sum_{j=1}^N a_j h_j+
   \sum_{j=1}^{N-2}(-1)^j e_j,
\end{align}
where 
\begin{align}\label{a and b}
   \begin{cases}
   a_j=\big(\frac{1+z_j}{2}\big)^2(2-z_j),
   \ 1\leqslant j\leqslant N-1, 
   \\
   a_N=\big(\frac{1-z_{N-1}}{2}\big)^2(2+z_{N-1}),
   \end{cases}
\end{align}
and
\begin{align*}
   e_j=\sum_{1\leqslant\alpha_1<\ldots<\alpha_j
   \leqslant N-2}a_{\alpha_1}\ldots a_{\alpha_j}
   \sum_{k=\alpha_j+1}^{N}a_k\cdot h_k|_{x_{\alpha_1}
   =\cdots=x_{\alpha_j}=1}.
\end{align*}
Then, we prove that $f$ satisfies the boundary constraints \eqref{boundary of f} and first-order derivative conditions \eqref{1st_derivative}.

Without lose of generality, we set $z_m=1$, $1\leqslant m\leqslant N-2$. Noting that we want to show
   \begin{align}\label{f|z_m=1}
   f(\Pi_{m,1}^{N-1}\mathbf{Z})=
   h_m. \end{align}
Let $z_m=1$ in \eqref{f}, we have $a_m=1$ and
\begin{align*}
   f(\Pi_{m,1}^{N-1}\mathbf{Z})=
   \sum_{\substack{j=1\\ j\neq m}}^N a_j h_j|_{z_m=1}+h_m+
   \sum_{j=1}^{N-2}(-1)^j e_j|_{z_m=1}.
\end{align*}
To obtain \eqref{f|z_m=1}, we only need to show 
   \begin{align}\label{only need to show}
   \sum_{\substack{j=1\\ j\neq m}}^N a_j h_j|_{z_m=1}+
   \sum_{j=1}^{N-2}(-1)^j e_j|_{z_m=1}=0.
   \end{align}
Through some calculations, we can obtain
   \begin{align}\label{e_1}
        e_1|_{z_m=1}
        =&\Big(\sum_{1\leqslant\alpha_1\leqslant N-2}a_{\alpha_1}\sum_{k=\alpha_1+1}^{N}a_k\cdot h_k|_{z_{\alpha_1}=1}\Big)\Big\vert_{z_m=1}
        \notag\\
        =&\sum_{1\leqslant\alpha_1<m\leqslant N-2}a_{\alpha_1}\Big(\sum_{\substack{k=\alpha_1+1 \\k\neq m}}^{N}a_k\cdot h_k|_{z_{\alpha_1}=z_m=1}+ h_m|_{z_{\alpha_1}=1}\Big)
        \notag\\
        &+\sum_{k=m+1}^{N}a_k\cdot h_k|_{z_m=1}+\sum_{1\leqslant m<\alpha_1\leqslant N-2}a_{\alpha_1}\sum_{k=\alpha_1+1}^{N}a_k\cdot h_k|_{z_{\alpha_1}=z_m=1}
        \notag\\
        =&\sum_{\substack{j=1 \\ j\neq m}}
        ^N a_j h_j|_{z_m=1}+\sum_{\substack{1\leqslant\alpha_1\leqslant N-2\\ \alpha_1\neq m}}a_{\alpha_1}\sum_{\substack{k=\alpha_1+1 \\ k\neq m}}^{N}a_k\cdot h_k|_{z_{\alpha_1}=z_m=1}
        \notag\\
        :=&\sum_{\substack{j=1 \\ j\neq m}}
        ^N a_j h_j|_{z_m=1}+R_1,
   \end{align}
where the third equality holds due to the condition \eqref{continuity condition}.
Similarly, for $2\leqslant j\leqslant N-3$, performing a direct calculation yields
    \begin{align}\label{e_j}
        e_j|_{z_m=1}=&\Big(\sum_{1\leqslant\alpha_1<
        \ldots<\alpha_j
        \leqslant N-2}a_{\alpha_1}\ldots a_{\alpha_j}
        \sum_{k=\alpha_j+1}^{N}a_k\cdot h_k|_{x_{\alpha_1}
        =\cdots=x_{\alpha_j}=1}\Big)\Big\vert_{z_m=1}
        \notag\\
        =&\sum_{\substack{1\leqslant\alpha_1\leqslant\cdots\leqslant\alpha_{j-1}\leqslant N-2\\ \alpha_1,\cdots,\alpha_{j-1}\neq m}}a_{\alpha_1}\ldots a_{\alpha_{j-1}}\sum_{\substack{k=\alpha_{j-1}+1 \\ k\neq m}}^{N}a_k\cdot h_k|_{z_{\alpha_1}=\cdots=z_{\alpha_{j-1}}=z_m=1}
        \notag\\
        &+\sum_{\substack{1\leqslant\alpha_1
        \leqslant\cdots\leqslant\alpha_{j}\leqslant N-2\\ \alpha_1,\cdots,\alpha_{j}\neq m}}a_{\alpha_1}\ldots a_{\alpha_{j}}\sum_{\substack{k=\alpha_{j}+1 \\ k\neq m}}^{N}a_k\cdot h_k|_{z_{\alpha_1}=\cdots=z_{\alpha_{j}}=z_m=1}
        \notag\\
        :=&~R_{j-1}+R_j.
    \end{align}
For $j=N-2$, we have
    \begin{align}\label{e_{N-2}}
        e_{N-2}|_{z_m=1}=&~a_1\cdots a_{m-1}a_{m+1}\cdots a_{N-2}\sum_{k=N-1}^N(a_k\cdot h_k|_{z_1=\cdots=z_{N-2}=z_m=1})
        \notag\\
        =&~R_{N-3}.
    \end{align}
Combining \eqref{e_1}--\eqref{e_{N-2}}, we can obtain
    \begin{align*}
        \sum_{\substack{j=1\\ j\neq m}}^N a_j h_j|_{z_m=1}+
        \sum_{j=1}^{N-2}(-1)^j e_j|_{z_m=1}=&
        \sum_{\substack{j=1\\ j\neq m}}^N a_j h_j|_{z_m=1}-\sum_{\substack{j=1\\ j\neq m}}^N a_j h_j|_{z_m=1}-R_1
        \\
        &+\sum_{j=1}^{N-3}(-1)^j(R_{j-1}+R_j)+(-1)^{N-2}R_{N-3}
        \\
        =&~0.
    \end{align*}
Hence, $f$ satisfies the boundary constraints \eqref{boundary of f} for $1\leqslant m\leqslant N-2$. \

Next, we prove that $f$ satisfies the first-order derivative conditions \eqref{1st_derivative}. It is worth noting that $a_j'(\pm1)=0, \ 1\leqslant j\leqslant N$. Taking the derivative of $e_j$ with respect to $z_m$ and letting $z_m=1$ yields
    \begin{align*}
        \frac{\partial e_j}{\partial z_m}\Big|_{z_m=1}
        =&\sum_{\substack{1\leqslant\alpha_1\leqslant
        \cdots\leqslant\alpha_j\leqslant N-2\\ m\in\{\alpha_1,\cdots,\alpha_j\}}}^N a_{\alpha_1}\cdots a_m'(1)\cdots a_{\alpha_j}
        \sum_{k=\alpha_j+1}^Na_k\cdot h_k|_{z_{\alpha_1}=\cdots=z_{\alpha_j}=z_m=1}
        \\
        &+\sum_{\substack{1\leqslant\alpha_1\leqslant
        \cdots\leqslant\alpha_j\leqslant N-2\\ \alpha_1,\cdots,\alpha_j\neq m}}^N a_{\alpha_1}\cdots a_{\alpha_j}
        \sum_{\substack{k=\alpha_j+1\\ k\neq m}}^Na_k\cdot \frac{\partial h_k}{\partial z_m}|_{z_{\alpha_1}=\cdots=z_{\alpha_j}=z_m=1}
        \\
        &+\sum_{\substack{1\leqslant\alpha_1\leqslant
        \cdots\leqslant\alpha_j\leqslant N-2\\ \alpha_1,\cdots,\alpha_j\neq m}}^N a_{\alpha_1}\cdots a_{\alpha_j}
        a_m'(1)\cdot h_m|_{z_{\alpha_1}=\cdots=
        z_{\alpha_j}=z_m=1}
        \\
        =&~0,
    \end{align*}
where  the last equality is obtained from condition \eqref{1st derivative condition of h_i}. Taking the derivative of $f$ and applying condition \eqref{1st derivative condition of h_i} again gives
    \begin{align*}
        \frac{\partial f}{\partial z_m}\Big|_{z_m=1}=&
        \sum_{\substack{j=1\\ j\neq m}}^N a_j \frac{\partial h_j}{\partial z_m}|_{z_m=1}+a_m'(1)h_m+\sum_{j=1}^{N-2} (-1)^j\frac{\partial e_j}{\partial z_m}\Big|_{z_m=1}
        =0.
    \end{align*}
Hence, $f$ satisfies the first-order derivative conditions \eqref{1st_derivative} for $1\leqslant m\leqslant N-2$. The proof for the remaining case 
$z_m=\pm 1 \ (m=N\!-\!1)$ follows a similar approach. 
\end{proof}

\section{Construction of $\gamma_i^N$}\label{Construction of gamma_i^N}
For convenience of construction,  we let $\gamma_i^N({\mathbf{\Phi}}^{N})$ be independent of $\phi_i$. In this case, we consider $\phi_i$ to be an arbitrary real number in $[-1,1]$. As long as $\gamma_i^{N-1}$ is given that satisfies the consistencies \eqref{Mechanic consistency}-\eqref{Dynamic consistency},  we can give an effective approach to construct $\gamma_i^N$, $i=1,\ldots,N\!-\!1$. 

Assume that $\gamma_i^N$ has the form
\begin{align*}
\gamma_i^N({\mathbf{\Phi}}^{N})=\widehat{\gamma}_i^N({\mathbf{\Phi}}^{N})+c_i^N({\mathbf{\Phi}}^{N}),
\end{align*}
where $\widehat{\gamma}_i^N$ is introduced to ensure that $\gamma_i^N$ satisfies consistencies \eqref{Mechanic consistency}--\eqref{Algebraic consistency} with $c_i^N$ has no effect on the consistencies \eqref{Mechanic consistency}-- \eqref{Algebraic consistency}. $c_i^N$ only ensures the consistency \eqref{Dynamic consistency}. Thus, to obtain a simple form, we let 
\begin{align}\label{condition of c_i^N}
   \begin{cases}
   c_i^N({\mathbf{\Phi}}^{N})=0,
   \quad \forall\ {\mathbf{\Phi}}^{N}
   \in
   \begin{cases}
   \displaystyle\bigcup_{k}\mathcal{I}^{ik},&i=1,\\
   \Big(\displaystyle\bigcup_{k}\mathcal{I}^{ik}\Big)\bigcup
   \Big(\displaystyle\bigcup_{n=1}^{i-1}\mathcal{B}^n\Big), &2\leqslant i\leqslant N-2,
   \end{cases}
   \vspace{1mm}\\
   \dfrac{\partial c_i^N}{\partial \phi_j}\bigg|_{{\mathbf{\Phi}}^{N}\in\Pi_{j,b}^{N-1}Q_{N-1}}=0, \quad (j,b)\in I_a, \ 1\leqslant i\leqslant N-1, \ i\neq j.
   \end{cases}
\end{align}
Assume that 
\begin{align*}
   \widehat{\gamma}_i^N(\mathbf{\Phi}^{N})=
   \begin{cases}
   h_j^i, &{\mathbf{\Phi}}^{N}\in\Pi_{j,1}^{N-1}Q_{N-1},\ 1\leqslant i, j\leqslant N-1, \ i\neq j\\ 
   h_{N}^i, &{\mathbf{\Phi}}^{N}\in\Pi_{N-1,-1}^{N-1}Q_{N-1},\ 1\leqslant i\leqslant N-2,
   \end{cases}
\end{align*}
where 
\begin{align*}
   h_j^i
   =
   \begin{cases}
   \gamma_{i-1}^{N-1}(\mathbf{\Phi}_{(-j)}^{N};
   \mathbf{\Sigma}^{N,i}), &i>j\geqslant 1,\\
   \gamma_{i}^{N-1}(\mathbf{\Phi}_{(-j)}^{N};
   \mathbf{\Sigma}_{(-j)}^{N,i}), &i<j\leqslant N-1, 
   \end{cases}
\end{align*}
and 
   \begin{align*}
   h_{N}^i=\gamma_i^{N-1}(\mathbf{\Phi}^{N-1}
   ;\mathbf{\Sigma}^{N-1,i})), \quad 1\leqslant i\leqslant N-2. 
   \end{align*}
For $N=2$, we let $h_2^1=\frac{3}{2\sqrt2}\sigma_{13}$, $h_3^1=\frac{3}{2\sqrt2}\sigma_{12}$, and $h_1^2=\frac{3}{2\sqrt2}\sigma_{23}$. Since $\gamma_i^{N-1} (1\leqslant i\leqslant N-2)$ satisfies consistency \eqref{Mechanic consistency}, it is easy to verify that $\widehat{\gamma}_i^N(\mathbf{\Phi}^{N})$ already satisfies consistency \eqref{Mechanic consistency} under the assumption. It is obvious that $W^N|_{{\mathbf{\Phi}}^{N}\in\Pi_{N-1,-1}^{N-1}Q_{N-1}}=W^{N-1}(\mathbf{\Phi}^{N-1};\bigcup_{i<N-1}\mathbf{\Sigma}^{N-1,i})$. Moreover, for $1\leqslant j\leqslant N-1$, we have
\begin{align*}
   W^N|_{{\mathbf{\Phi}}^{N}\in\Pi_{j,1}^{N-1}Q_{N-1}}=&\int_{\Omega}\sum_{i=1,i\neq j}^{N-1}h_j^i g(\phi_i)\mathrm{d}x
   \\
   =&\int_{\Omega}\sum_{i=1}^{j-1}\gamma_i^{N-1}
   (\mathbf{\Phi}_{(-j)}^{N};\mathbf{\Sigma}_{(-j)}^{N,i}))g(\phi_i)
   \\
   &+\sum_{i=j+1}^{N-1}\gamma_{i-1}^{N-1}
   (\mathbf{\Phi}_{(-j)}^{N};\mathbf{\Sigma}^{N,i}))g(\phi_i)
   \mathrm{d}x\\
   =&~W^{N-1}(\mathbf{\Phi}_{(-j)}^{N};\bigcup_{i\neq j}\mathbf{\Sigma}_{(-j)}^{N,i}),
\end{align*}
where $g(\phi_i)=\frac{\varepsilon}{2}|\nabla\phi_i|^{2}+\frac{1}{\varepsilon}F(\phi_i) $.  Noting that ${\mathbf{\Phi}}^{N}\in[-1,1]^{N-1}$ and $h_1^i,\ldots,h_N^i$ are the boundary conditions of the unknown function $\widehat{\gamma}_i^N(\mathbf{\Phi}^{N})$. we can easily check that $h_1^i,\ldots,h_N^i$ are continuous at their intersections (points, lines, surfaces, volumes, etc.). Thus, the problem now translates to how to utilize the boundary information of $\widehat{\gamma}_i^N$ to construct the interior of $\widehat{\gamma}_i^N$.

For $1\leqslant i\leqslant N-2$, noting that $\widehat\gamma_i^N({\mathbf{\Phi}}^{N})$ is independent of $\phi_i$, and following the proof of Lemma \ref{construction of f} and Corollary \ref{corollary 1}, we can construct $\widehat{\gamma}_i^N$ as follows:
\begin{align}\label{widehat_gamma}
\widehat{\gamma}_i^N=\sum_{j=1}^N a_j h_j^i+
   \sum_{j=1}^{N-2}(-1)^j e_j,
\end{align}
where $h_j^i=0$ if $i=j$,
\begin{align*}
   a_j=
   \begin{cases}
   \big(\frac{1+x_j}{2}\big)^2(2-x_j), &1\leqslant j\leqslant N-1, i\neq j,\\
   0, &1\leqslant i=j\leqslant N-1,\\
   \big(\frac{1-x_{j-1}}{2}\big)^2(2+x_{j-1}), &j=N,
   \end{cases}
\end{align*}
and
\begin{align*}
   e_j=\sum_{1\leqslant\alpha_1<\ldots<\alpha_j
   \leqslant N-2}a_{\alpha_1}\ldots a_{\alpha_j}
   \sum_{k=\alpha_j+1}^{N}a_k\cdot h_k^i|_{x_{\alpha_1}
   =\cdots=x_{\alpha_j}=1}.
\end{align*}   
Then $\widehat{\gamma}_i^N$ satisfies the consistencies \eqref{Energetic consistency} and \eqref{Algebraic consistency}. 
In addition, according to \eqref{condition of c_i^N}, we can let $c_i^N$ have the following form:
\begin{align*}
   c_i^N=\Lambda\prod_{\substack{j=1\\j\neq i}}^{N-1}(1-\phi_j^2)^2,
\end{align*}
where $\Lambda$ is a positive constant. Finally, in order to ensure the dynamic consistency \eqref{Dynamic consistency},  we can determine the constant $\Lambda$ by requiring that 
\begin{align}\label{condition of c}
   \frac{\partial^2}{\partial \phi_j^2}(\widehat{\gamma}_i^N+c_i^N)\big|_{{\mathbf{\Phi}}^{N}\in\Pi_{j,b}^{N-1}Q_{N-1}}> 0, \quad (j,b)\in I_a, \ 1\leqslant i\leqslant N-1, \ i\neq j.
\end{align}
In summary, we can obtain $\gamma_i^{N}$, $1\leqslant i\leqslant N-2$ as follows:
\begin{align}\label{formula of gamma_i^N}
\gamma_i^N=\sum_{j=1}^N a_j h_j^i+
   \sum_{j=1}^{N-2}(-1)^j e_j+
   \Lambda\prod_{\substack{j=1\\j\neq i}}^{N-1}(1-\phi_j^2)^2.
\end{align}

For $i=N\!-\!1$, since $\gamma_{N-1}^N$ is independent of $\phi_{N-1}$ and $\phi_N$, we can additionally set $a_{N-1}=a_N=h_{N-1}^{N-1}=h_N^{N-1}=0$ in \eqref{widehat_gamma} to obtain $\gamma_{N-1}^N$ by the similar method.

\section{Proof of Theorem \ref{Thm_energy_stable}}\label{proof_of_Thm}
\begin{proof}
Taking the inner product of \eqref{second order scheme3 1} with $(\tau/2)\mu_{\varphi}^{n+\frac{1}{4}}$ and \eqref{second order scheme3 2} with $(\varphi^{n+\frac{1}{2}}-\varphi^n)$, respectively. Combining the results, we get
   \begin{align}\label{second order3 inner product}
   &-\frac{\tau}{2}\int_{\Omega}M_2(\psi^n)|\nabla\mu_{\varphi}^{n+\frac{1}{4}}|
   ^2\mathrm{d}\mathbf{x}\notag\\=&\int_{\Omega}\gamma_2(\psi^n)\Big(\frac{\varepsilon}{2}|\nabla\varphi^{n+\frac{1}{2}}|^2
   -\frac{\varepsilon}{2}|\nabla\varphi^{n}|^2 \Big)\mathrm{d}\mathbf{x} 
   +\frac{1}{\varepsilon}\int_{\Omega}\gamma_2(\psi^n)f(\varphi^{n}) (\varphi^{n+\frac{1}{2}}-\varphi^n)\mathrm{d}\mathbf{x}
   \notag\\
   &+\frac{1}{2\varepsilon}\int_{\Omega}\gamma_2(\psi^n)f'(\varphi^n)(\varphi^{n+\frac{1}{2}}-\varphi^n)^2\mathrm{d}\mathbf{x}
   +\frac{A_1\tau}{\varepsilon}\int_{\Omega}\gamma_2(\psi^n)(\varphi^{n+\frac{1}{2}}-\varphi^{n})^2\mathrm{d}\mathbf{x}
   \notag\\
   &+\int_{\Omega}\gamma_1'(\varphi^n)(\varphi^{n+\frac{1}{2}}-\varphi^{n})g(\psi^n)\mathrm{d}\mathbf{x}
 +\frac{1}{2}\int_{\Omega}\gamma_1''(\varphi^n)g(\psi^{n})(\varphi^{n+\frac{1}{2}}-\varphi^{n})^2\mathrm{d}\mathbf{x}
   \notag\\
   &+B_1\tau\int_{\Omega}g(\psi^n)(\varphi^{n+\frac{1}{2}}-\varphi^{n})^2\mathrm{d}\mathbf{x}.
   \end{align}
For the second term on the right side  of \eqref{second order3 inner product}, by Taylor expansions, we have
   \begin{align}\label{second order3 Term A}
   \frac{1}{\varepsilon}\int_{\Omega}\gamma_2(\psi^n)f(\varphi^{n}) (\varphi^{n+\frac{1}{2}}-\varphi^n)\mathrm{d}\mathbf{x}\geqslant&~\frac{1}{\varepsilon}\int_{\Omega}\gamma_2(\psi^n)
   \big(F(\varphi^{n+\frac{1}{2}})-F(\varphi^n)\big)\mathrm{d}\mathbf{x}
   \notag\\
   &-\frac{L_{F}}{2\varepsilon}\int_{\Omega}\gamma_2(\psi^n)(\varphi^{n+\frac{1}{2}}-\varphi^n)^2\mathrm{d}\mathbf{x}.
   \end{align}
Similarly, we can get
   \begin{align}\label{second order3 Term C}
   \int_{\Omega}\gamma_1'(\varphi^n)(\varphi^{n+\frac{1}{2}}-\varphi^{n})g(\psi^n)\mathrm{d}\mathbf{x}\geqslant&
   \int_{\Omega}\big(\gamma_1(\varphi^{n+\frac{1}{2}})-\gamma_1(\varphi^n)\big)g(\psi^n)\mathrm{d}\mathbf{x} \notag\\
   &-\frac{L_{\gamma_1}}{2}\int_{\Omega}g(\psi^n)(\varphi^{n+\frac{1}{2}}-\varphi^n)^2\mathrm{d}\mathbf{x},
  \end{align}
  
   \begin{align}\label{second order3 Term B}
   \frac{1}{2\varepsilon}\int_{\Omega}\gamma_2(\psi^n)f'(\varphi^n)(\varphi^{n+\frac{1}{2}}-\varphi^n)^2\mathrm{d}\mathbf{x}\geqslant
   -\frac{L_{F}}{2\varepsilon}\int_{\Omega}\gamma_2(\psi^n)(\varphi^{n+\frac{1}{2}}-\varphi^n)^2\mathrm{d}\mathbf{x},
   \end{align}
and
   \begin{align}\label{second order3 Term D}
   \frac{1}{2}\int_{\Omega}\gamma_1''(\varphi^n)g(\psi^{n})(\varphi^{n+\frac{1}{2}}-\varphi^{n})^2\mathrm{d}\mathbf{x}\geqslant
   -\frac{L_{\gamma_1}}{2}\int_{\Omega}g(\psi^{n})(\varphi^{n+\frac{1}{2}}-\varphi^{n})^2\mathrm{d}\mathbf{x}.
   \end{align}
Combining \eqref{second order3 inner product}--\eqref{second order3 Term D}, we obtain
   \begin{align*}
   &~W(\psi^n,\varphi^{n+\frac{1}{2}})-W(\psi^n,\varphi^n)
   +\Big(\frac{A_1\tau}{\varepsilon}-\frac{L_{F}}{\varepsilon}\Big)\int_{\Omega}\gamma_2(\psi^n)(\varphi^{n+\frac{1}{2}}-\varphi^n)^2\mathrm{d}\mathbf{x}
   \notag\\
   &+(B_1\tau-L_{\gamma_1})\int_{\Omega}g(\psi^n)(\varphi^{n+\frac{1}{2}}-\varphi^n)^2\mathrm{d}\mathbf{x}\notag\\
   \leqslant&   -\frac{\tau}{2}\int_{\Omega}M_2(\psi^n)|\nabla\mu_{\varphi}^{n+\frac{1}{4}}|^2\mathrm{d}\mathbf{x}.
   \end{align*}
Under the conditions $A_1\tau\geqslant L_{F}$ and $B_1\tau\geqslant L_{\gamma_1}$, we have
   \begin{align}\label{ineuqality_1}
   W(\psi^n,\varphi^{n+\frac{1}{2}})-W(\psi^n,\varphi^n)\leqslant
   -\frac{\tau}{2}\int_{\Omega}M_2(\psi^n)|\nabla\mu_{\varphi}^{n+\frac{1}{4}}|^2\mathrm{d}\mathbf{x}.
   \end{align}
Similarly, for \eqref{second order scheme3 3} and \eqref{second order scheme3 4}, we have
   \begin{align}\label{ineuqality_2}
   W(\psi^{n+1},\varphi^{n+\frac{1}{2}})-W(\psi^n,\varphi^{n+\frac{1}{2}})\leqslant
   -\tau\int_{\Omega}M_1|\nabla\mu_{\psi}^{n+\frac{1}{2}}|^2\mathrm{d}\mathbf{x}.
   \end{align}
For \eqref{second order scheme3 5} and \eqref{second order scheme3 6}, we have  
   \begin{align}\label{ineuqality_3}
   W(\psi^{n+1},\varphi^{n+1})-W(\psi^{n+1},\varphi^{n+\frac{1}{2}})\leqslant
   -\frac{\tau}{2}\int_{\Omega}M_2(\psi^{n+1})|\nabla\mu_{\varphi}^{n+\frac{3}{4}}|^2\mathrm{d}\mathbf{x}.
   \end{align}
Combining \eqref{ineuqality_1}, \eqref{ineuqality_2} and \eqref{ineuqality_3}, we obtain
   \begin{align*}
   W(\psi^{n+1},\varphi^{n+1})
   \leqslant&~W(\psi^n,\varphi^{n})-\frac{\tau}{2}\int_{\Omega}M_2(\psi^n)|\nabla\mu_{\varphi}^{n+\frac{1}{4}}|^2\mathrm{d}\mathbf{x}
   \notag\\
   &-\tau\int_{\Omega}M_1|\nabla\mu_{\psi}^{n+\frac{1}{2}}|^2\mathrm{d}\mathbf{x} 
   -\frac{\tau}{2}\int_{\Omega}M_2(\psi^{n+1})|\nabla\mu_{\varphi}^{n+\frac{3}{4}}|^2\mathrm{d}\mathbf{x}.
   \notag\\
   \end{align*}
Thus, we complete the proof.
\end{proof}

\section*{Acknowledgments}
The work of Zhen Zhang was partially supported by National Key R\&D Program of China (2023YFA1011403) and the NSFC grant (92470112). The work of Chenxi Wang was partially supported by the NSFC grant (NO. 12401524).

\bibliography{sample} 

@article{Analysisof,
    AUTHOR = {Kirshtein, Arkadz and Brannick, James and Liu, Chun},
     TITLE = {Analysis of the energy dissipation laws in multi-component
              phase field models},
   JOURNAL = {Commun. Math. Sci.},
  FJOURNAL = {Communications in Mathematical Sciences},
    VOLUME = {18},
      YEAR = {2020},
    NUMBER = {3},
     PAGES = {875--885},
      ISSN = {1539-6746,1945-0796},
   MRCLASS = {76T30 (35Q35 35R35)},
  MRNUMBER = {4120542},
MRREVIEWER = {Yi\ Shi},
       DOI = {10.4310/cms.2020.v18.n3.a13},
       URL = {https://doi.org/10.4310/cms.2020.v18.n3.a13},
}

@article{Diffuseinterface,
    AUTHOR = {Brannick, J. and Liu, C. and Qian, T. and Sun, H.},
     TITLE = {Diffuse interface methods for multiple phase materials: an
              energetic variational approach},
   JOURNAL = {Numer. Math. Theory Methods Appl.},
  FJOURNAL = {Numerical Mathematics. Theory, Methods and Applications},
    VOLUME = {8},
      YEAR = {2015},
    NUMBER = {2},
     PAGES = {220--236},
      ISSN = {1004-8979,2079-7338},
   MRCLASS = {65F10 (65M22 65M55 76T30)},
  MRNUMBER = {3395390},
MRREVIEWER = {Bruno\ Carpentieri},
       DOI = {10.4208/nmtma.2015.w12si},
       URL = {https://doi.org/10.4208/nmtma.2015.w12si},
}

@article{Thedynamics,
    AUTHOR = {Wang, Dong and Wang, Xiao-Ping and Wang, Ya-Guang},
     TITLE = {The dynamics of three-phase triple junction and contact
              points},
   JOURNAL = {SIAM J. Appl. Math.},
  FJOURNAL = {SIAM Journal on Applied Mathematics},
    VOLUME = {77},
      YEAR = {2017},
    NUMBER = {5},
     PAGES = {1805--1826},
      ISSN = {0036-1399,1095-712X},
   MRCLASS = {76T30 (35B25 35Q35)},
  MRNUMBER = {3717822},
MRREVIEWER = {V.\ D.\ Sharma},
       DOI = {10.1137/16M1090399},
       URL = {https://doi.org/10.1137/16M1090399},
}

@article{Zhang2016Phase,
    AUTHOR = {Zhang, Qian and Wang, Xiao-Ping},
     TITLE = {Phase field modeling and simulation of three-phase flow on
              solid surfaces},
   JOURNAL = {J. Comput. Phys.},
  FJOURNAL = {Journal of Computational Physics},
    VOLUME = {319},
      YEAR = {2016},
     PAGES = {79--107},
      ISSN = {0021-9991,1090-2716},
   MRCLASS = {76D05 (65M06 76M20 76T30)},
  MRNUMBER = {3506936},
       DOI = {10.1016/j.jcp.2016.05.016},
       URL = {https://doi.org/10.1016/j.jcp.2016.05.016},
}

@article{Frontmigration,
    AUTHOR = {Pego, R. L.},
     TITLE = {Front migration in the nonlinear {C}ahn-{H}illiard equation},
   JOURNAL = {Proc. Roy. Soc. London Ser. A},
  FJOURNAL = {Proceedings of the Royal Society. London. Series A.
              Mathematical, Physical and Engineering Sciences},
    VOLUME = {422},
      YEAR = {1989},
    NUMBER = {1863},
     PAGES = {261--278},
      ISSN = {0962-8444,2053-9169},
   MRCLASS = {80A20 (35Q20)},
  MRNUMBER = {997638},
}

@article{Onthreephase,
    AUTHOR = {Bronsard, Lia and Reitich, Fernando},
     TITLE = {On three-phase boundary motion and the singular limit of a
              vector-valued {G}inzburg-{L}andau equation},
   JOURNAL = {Arch. Rational Mech. Anal.},
  FJOURNAL = {Archive for Rational Mechanics and Analysis},
    VOLUME = {124},
      YEAR = {1993},
    NUMBER = {4},
     PAGES = {355--379},
      ISSN = {0003-9527},
   MRCLASS = {35K60 (73B30 82D35)},
  MRNUMBER = {1240580},
MRREVIEWER = {James\ F.\ Reineck},
       DOI = {10.1007/BF00375607},
       URL = {https://doi.org/10.1007/BF00375607},
}

@incollection {Unconditionallygradient,
    AUTHOR = {Eyre, David J.},
     TITLE = {Unconditionally gradient stable time marching the
              {C}ahn-{H}illiard equation},
 BOOKTITLE = {Computational and mathematical models of microstructural
              evolution ({S}an {F}rancisco, {CA}, 1998)},
    SERIES = {Mater. Res. Soc. Sympos. Proc.},
    VOLUME = {529},
     PAGES = {39--46},
 PUBLISHER = {MRS, Warrendale, PA},
      YEAR = {1998},
      ISBN = {1-55899-435-1},
   MRCLASS = {80-08 (65M12 80A22)},
  MRNUMBER = {1676409},
       DOI = {10.1557/PROC-529-39},
       URL = {https://doi.org/10.1557/PROC-529-39},
}

@article{shen2010numerical,
    AUTHOR = {Shen, Jie and Yang, Xiaofeng},
     TITLE = {Numerical approximations of {A}llen-{C}ahn and
              {C}ahn-{H}illiard equations},
   JOURNAL = {Discrete Contin. Dyn. Syst.},
  FJOURNAL = {Discrete and Continuous Dynamical Systems. Series A},
    VOLUME = {28},
      YEAR = {2010},
    NUMBER = {4},
     PAGES = {1669--1691},
      ISSN = {1078-0947,1553-5231},
   MRCLASS = {65M60 (65M12 65M15 80M10)},
  MRNUMBER = {2679727},
MRREVIEWER = {Georgios\ D.\ Akrivis},
       DOI = {10.3934/dcds.2010.28.1669},
       URL = {https://doi.org/10.3934/dcds.2010.28.1669},
}

@Article{zhu1999coarsening,
  author    = {Zhu, Jingzhi and Chen, Long-Qing and Shen, Jie and Tikare, Veena},
  journal   = {Phys. Rev. E},
  title     = {Coarsening kinetics from a variable-mobility Cahn-Hilliard equation: Application of a semi-implicit Fourier spectral method},
  year      = {1999},
  number    = {4},
  pages     = {3564},
  volume    = {60},
  fjournal  = {Physical Review E},
  publisher = {APS},
}

@article{yang2016linear,
    AUTHOR = {Yang, Xiaofeng},
     TITLE = {Linear, first and second-order, unconditionally energy stable
              numerical schemes for the phase field model of homopolymer
              blends},
   JOURNAL = {J. Comput. Phys.},
  FJOURNAL = {Journal of Computational Physics},
    VOLUME = {327},
      YEAR = {2016},
     PAGES = {294--316},
      ISSN = {0021-9991,1090-2716},
   MRCLASS = {65M99 (82D60)},
  MRNUMBER = {3564340},
       DOI = {10.1016/j.jcp.2016.09.029},
       URL = {https://doi.org/10.1016/j.jcp.2016.09.029},
}

@article{shen2018scalar,
    AUTHOR = {Shen, Jie and Xu, Jie and Yang, Jiang},
     TITLE = {The scalar auxiliary variable ({SAV}) approach for gradient
              flows},
   JOURNAL = {J. Comput. Phys.},
  FJOURNAL = {Journal of Computational Physics},
    VOLUME = {353},
      YEAR = {2018},
     PAGES = {407--416},
      ISSN = {0021-9991,1090-2716},
   MRCLASS = {65M06 (65M12)},
  MRNUMBER = {3723659},
       DOI = {10.1016/j.jcp.2017.10.021},
       URL = {https://doi.org/10.1016/j.jcp.2017.10.021},
}

@article{zhao2017decoupled,
    AUTHOR = {Zhao, Jia and Li, Huiyuan and Wang, Qi and Yang, Xiaofeng},
     TITLE = {Decoupled energy stable schemes for a phase field model of
              three-phase incompressible viscous fluid flow},
   JOURNAL = {J. Sci. Comput.},
  FJOURNAL = {Journal of Scientific Computing},
    VOLUME = {70},
      YEAR = {2017},
    NUMBER = {3},
     PAGES = {1367--1389},
      ISSN = {0885-7474,1573-7691},
   MRCLASS = {65M06 (65Y10 76Dxx 76M20 76T30)},
  MRNUMBER = {3608344},
MRREVIEWER = {Anargiros\ I.\ Delis},
       DOI = {10.1007/s10915-016-0283-9},
       URL = {https://doi.org/10.1007/s10915-016-0283-9},
}

@article{yang2017numerical,
    AUTHOR = {Yang, Xiaofeng and Zhao, Jia and Wang, Qi and Shen, Jie},
     TITLE = {Numerical approximations for a three-component
              {C}ahn-{H}illiard phase-field model based on the invariant
              energy quadratization method},
   JOURNAL = {Math. Models Methods Appl. Sci.},
  FJOURNAL = {Mathematical Models and Methods in Applied Sciences},
    VOLUME = {27},
      YEAR = {2017},
    NUMBER = {11},
     PAGES = {1993--2030},
      ISSN = {0218-2025,1793-6314},
   MRCLASS = {80A22 (35K25 35K45 35K55)},
  MRNUMBER = {3691811},
       DOI = {10.1142/S0218202517500373},
       URL = {https://doi.org/10.1142/S0218202517500373},
}

@article{li2022stability,
    AUTHOR = {Li, Dong and Quan, Chaoyu and Xu, Jiao},
     TITLE = {Stability and convergence of {S}trang splitting. {P}art {I}:
              {S}calar {A}llen-{C}ahn equation},
   JOURNAL = {J. Comput. Phys.},
  FJOURNAL = {Journal of Computational Physics},
    VOLUME = {458},
      YEAR = {2022},
     PAGES = {Paper No. 111087, 20},
      ISSN = {0021-9991,1090-2716},
   MRCLASS = {65M12 (65M15)},
  MRNUMBER = {4389745},
       DOI = {10.1016/j.jcp.2022.111087},
       URL = {https://doi.org/10.1016/j.jcp.2022.111087},
}

@article{li2022stabilityand,
    AUTHOR = {Li, Dong and Quan, Chaoyu and Xu, Jiao},
     TITLE = {Stability and convergence of {S}trang splitting. {P}art {II}:
              {T}ensorial {A}llen-{C}ahn equations},
   JOURNAL = {J. Comput. Phys.},
  FJOURNAL = {Journal of Computational Physics},
    VOLUME = {454},
      YEAR = {2022},
     PAGES = {Paper No. 110985, 20},
      ISSN = {0021-9991,1090-2716},
   MRCLASS = {65M12 (65M15)},
  MRNUMBER = {4369174},
MRREVIEWER = {Zhi-Zhong\ Sun},
       DOI = {10.1016/j.jcp.2022.110985},
       URL = {https://doi.org/10.1016/j.jcp.2022.110985},
}

@article{shen2019new,
    AUTHOR = {Shen, Jie and Xu, Jie and Yang, Jiang},
     TITLE = {A new class of efficient and robust energy stable schemes for
              gradient flows},
   JOURNAL = {SIAM Rev.},
  FJOURNAL = {SIAM Review},
    VOLUME = {61},
      YEAR = {2019},
    NUMBER = {3},
     PAGES = {474--506},
      ISSN = {0036-1445,1095-7200},
   MRCLASS = {65J08 (35K20 35K35 35K55 65Z05)},
  MRNUMBER = {3989239},
       DOI = {10.1137/17M1150153},
       URL = {https://doi.org/10.1137/17M1150153},
}

@article{cahn1996cahn,
    AUTHOR = {Cahn, J. W. and Elliott, C. M. and Novick-Cohen, A.},
     TITLE = {The {C}ahn-{H}illiard equation with a concentration dependent
              mobility: motion by minus the {L}aplacian of the mean
              curvature},
   JOURNAL = {European J. Appl. Math.},
  FJOURNAL = {European Journal of Applied Mathematics},
    VOLUME = {7},
      YEAR = {1996},
    NUMBER = {3},
     PAGES = {287--301},
      ISSN = {0956-7925,1469-4425},
   MRCLASS = {80A22 (35Q99)},
  MRNUMBER = {1401172},
MRREVIEWER = {J.\ R.\ Ockendon},
       DOI = {10.1017/S0956792500002369},
       URL = {https://doi.org/10.1017/S0956792500002369},
}

@article{dai2012motion,
    AUTHOR = {Dai, Shibin and Du, Qiang},
     TITLE = {Motion of interfaces governed by the {C}ahn-{H}illiard
              equation with highly disparate diffusion mobility},
   JOURNAL = {SIAM J. Appl. Math.},
  FJOURNAL = {SIAM Journal on Applied Mathematics},
    VOLUME = {72},
      YEAR = {2012},
    NUMBER = {6},
     PAGES = {1818--1841},
      ISSN = {0036-1399,1095-712X},
   MRCLASS = {35K55 (35B40 74N20 82C26)},
  MRNUMBER = {3022288},
MRREVIEWER = {Bj\"{o}rn\ Stinner},
       DOI = {10.1137/120862582},
       URL = {https://doi.org/10.1137/120862582},
}

@article{dai2014coarsening,
    AUTHOR = {Dai, Shibin and Du, Qiang},
     TITLE = {Coarsening mechanism for systems governed by the
              {C}ahn-{H}illiard equation with degenerate diffusion mobility},
   JOURNAL = {Multiscale Model. Simul.},
  FJOURNAL = {Multiscale Modeling \& Simulation. A SIAM Interdisciplinary
              Journal},
    VOLUME = {12},
      YEAR = {2014},
    NUMBER = {4},
     PAGES = {1870--1889},
      ISSN = {1540-3459,1540-3467},
   MRCLASS = {35K25 (35B25 35B40 35K59 35Q92 74N20 82C26)},
  MRNUMBER = {3291528},
       DOI = {10.1137/140952387},
       URL = {https://doi.org/10.1137/140952387},
}

@article{chen2014analysis,
    AUTHOR = {Chen, Xinfu and Wang, Xiaoping and Xu, Xianmin},
     TITLE = {Analysis of the {C}ahn-{H}illiard equation with a relaxation
              boundary condition modeling the contact angle dynamics},
   JOURNAL = {Arch. Ration. Mech. Anal.},
  FJOURNAL = {Archive for Rational Mechanics and Analysis},
    VOLUME = {213},
      YEAR = {2014},
    NUMBER = {1},
     PAGES = {1--24},
      ISSN = {0003-9527,1432-0673},
   MRCLASS = {35Q35 (35A01 35A15 35B45 35M33 80A22)},
  MRNUMBER = {3198643},
MRREVIEWER = {Patricia\ Nunes\ da Silva},
       DOI = {10.1007/s00205-013-0713-x},
       URL = {https://doi.org/10.1007/s00205-013-0713-x},
}

@article{lee2016sharp,
    AUTHOR = {Lee, Alpha Albert and M\"{u}nch, Andreas and S\"{u}li, Endre},
     TITLE = {Sharp-interface limits of the {C}ahn-{H}illiard equation with
              degenerate mobility},
   JOURNAL = {SIAM J. Appl. Math.},
  FJOURNAL = {SIAM Journal on Applied Mathematics},
    VOLUME = {76},
      YEAR = {2016},
    NUMBER = {2},
     PAGES = {433--456},
      ISSN = {0036-1399,1095-712X},
   MRCLASS = {35K35 (35B25 35K59 74N20 80A22 82C26)},
  MRNUMBER = {3466205},
MRREVIEWER = {Christopher\ P.\ Grant},
       DOI = {10.1137/140960189},
       URL = {https://doi.org/10.1137/140960189},
}

@article{colli2017asymptotic,
    AUTHOR = {Colli, Pierluigi and Gilardi, Gianni and Rocca, Elisabetta and
              Sprekels, J\"{u}rgen},
     TITLE = {Asymptotic analyses and error estimates for a
              {C}ahn-{H}illiard type phase field system modelling tumor
              growth},
   JOURNAL = {Discrete Contin. Dyn. Syst. Ser. S},
  FJOURNAL = {Discrete and Continuous Dynamical Systems. Series S},
    VOLUME = {10},
      YEAR = {2017},
    NUMBER = {1},
     PAGES = {37--54},
      ISSN = {1937-1632,1937-1179},
   MRCLASS = {82C26 (35K35 35K52 35K57 35Q92 78A25 92C50)},
  MRNUMBER = {3590170},
       DOI = {10.3934/dcdss.2017002},
       URL = {https://doi.org/10.3934/dcdss.2017002},
}

@article{tang2022asymptotic,
    AUTHOR = {Tang, Tao and Wang, Boyi and Yang, Jiang},
     TITLE = {Asymptotic analysis on the sharp interface limit of the
              time-fractional {C}ahn-{H}illiard equation},
   JOURNAL = {SIAM J. Appl. Math.},
  FJOURNAL = {SIAM Journal on Applied Mathematics},
    VOLUME = {82},
      YEAR = {2022},
    NUMBER = {3},
     PAGES = {773--792},
      ISSN = {0036-1399,1095-712X},
   MRCLASS = {65M30 (65M12 65M15 82C24)},
  MRNUMBER = {4415416},
       DOI = {10.1137/21M1427863},
       URL = {https://doi.org/10.1137/21M1427863},
}

@article{boyer2011numerical,
    AUTHOR = {Boyer, Franck and Minjeaud, Sebastian},
     TITLE = {Numerical schemes for a three component {C}ahn-{H}illiard
              model},
   JOURNAL = {ESAIM Math. Model. Numer. Anal.},
  FJOURNAL = {ESAIM. Mathematical Modelling and Numerical Analysis},
    VOLUME = {45},
      YEAR = {2011},
    NUMBER = {4},
     PAGES = {697--738},
      ISSN = {2822-7840,2804-7214},
   MRCLASS = {65M60 (35D30 35K55 65M12 76T99)},
  MRNUMBER = {2804656},
MRREVIEWER = {Benito\ M.\ Chen-Charpentier},
       DOI = {10.1051/m2an/2010072},
       URL = {https://doi.org/10.1051/m2an/2010072},
}

@article{zhang2020decoupled,
    AUTHOR = {Zhang, Jun and Yang, Xiaofeng},
     TITLE = {Decoupled, non-iterative, and unconditionally energy stable
              large time stepping method for the three-phase
              {C}ahn-{H}illiard phase-field model},
   JOURNAL = {J. Comput. Phys.},
  FJOURNAL = {Journal of Computational Physics},
    VOLUME = {404},
      YEAR = {2020},
     PAGES = {109115, 26},
      ISSN = {0021-9991,1090-2716},
   MRCLASS = {65M06},
  MRNUMBER = {4043888},
       DOI = {10.1016/j.jcp.2019.109115},
       URL = {https://doi.org/10.1016/j.jcp.2019.109115},
}

@article{yang2021new,
    AUTHOR = {Yang, Xiaofeng},
     TITLE = {A new efficient fully-decoupled and second-order time-accurate
              scheme for {C}ahn-{H}illiard phase-field model of three-phase
              incompressible flow},
   JOURNAL = {Comput. Methods Appl. Mech. Engrg.},
  FJOURNAL = {Computer Methods in Applied Mechanics and Engineering},
    VOLUME = {376},
      YEAR = {2021},
     PAGES = {Paper No. 113589, 27},
      ISSN = {0045-7825,1879-2138},
   MRCLASS = {76T30 (65M06)},
  MRNUMBER = {4198145},
       DOI = {10.1016/j.cma.2020.113589},
       URL = {https://doi.org/10.1016/j.cma.2020.113589},
}

@article{boyer2006study,
    AUTHOR = {Boyer, Franck and Lapuerta, C\'{e}line},
     TITLE = {Study of a three component {C}ahn-{H}illiard flow model},
   JOURNAL = {M2AN Math. Model. Numer. Anal.},
  FJOURNAL = {M2AN. Mathematical Modelling and Numerical Analysis},
    VOLUME = {40},
      YEAR = {2006},
    NUMBER = {4},
     PAGES = {653--687},
      ISSN = {0764-583X,1290-3841},
   MRCLASS = {35K55 (35B35 35Q35 76D05 76T30 82C24)},
  MRNUMBER = {2274773},
MRREVIEWER = {Arnaud\ Rougirel},
       DOI = {10.1051/m2an:2006028},
       URL = {https://doi.org/10.1051/m2an:2006028},
}

@article{Reciprocal1931OnsagerI,
  title = {Reciprocal Relations in Irreversible Processes. I.},
  author = {Onsager, Lars},
  journal = {Phys. Rev.},
  volume = {37},
  issue = {4},
  pages = {405--426},
  numpages = {0},
  year = {1931},
  month = {Feb},
  publisher = {American Physical Society},
  doi = {10.1103/PhysRev.37.405},
  url = {https://link.aps.org/doi/10.1103/PhysRev.37.405}
}

@article{Reciprocal1931OnsagerII,
  title = {Reciprocal Relations in Irreversible Processes. II.},
  author = {Onsager, Lars},
  journal = {Phys. Rev.},
  volume = {38},
  issue = {12},
  pages = {2265--2279},
  numpages = {0},
  year = {1931},
  month = {Dec},
  publisher = {American Physical Society},
  doi = {10.1103/PhysRev.38.2265},
  url = {https://link.aps.org/doi/10.1103/PhysRev.38.2265}
}

@article {WangMulti2022,
    AUTHOR = {Wang, Xiakai and Huang, Zhongyi and Zhu, Wei},
     TITLE = {Multi-phase segmentation using modified complex
              {C}ahn-{H}illiard equations},
   JOURNAL = {Numer. Math. Theory Methods Appl.},
  FJOURNAL = {Numerical Mathematics. Theory, Methods and Applications},
    VOLUME = {15},
      YEAR = {2022},
    NUMBER = {2},
     PAGES = {442--463},
      ISSN = {1004-8979,2079-7338},
   MRCLASS = {65K10 (65M06 68U10)},
  MRNUMBER = {4407472},
       DOI = {10.4208/nmtma.oa-2021-0099},
       URL = {https://doi.org/10.4208/nmtma.oa-2021-0099},
}

@article {YangImage2019,
    AUTHOR = {Yang, Wenli and Huang, Zhongyi and Zhu, Wei},
     TITLE = {Image segmentation using the {C}ahn-{H}illiard equation},
   JOURNAL = {J. Sci. Comput.},
  FJOURNAL = {Journal of Scientific Computing},
    VOLUME = {79},
      YEAR = {2019},
    NUMBER = {2},
     PAGES = {1057--1077},
      ISSN = {0885-7474,1573-7691},
   MRCLASS = {65M32 (65K10 68U10)},
  MRNUMBER = {3969001},
MRREVIEWER = {Natesan\ Barani Balan},
       DOI = {10.1007/s10915-018-00899-7},
       URL = {https://doi.org/10.1007/s10915-018-00899-7},
}

@article {QiaoTwo2022,
    AUTHOR = {Qiao, Zhonghua and Zhang, Qian},
     TITLE = {Two-phase image segmentation by the {A}llen-{C}ahn equation
              and a nonlocal edge detection operator},
   JOURNAL = {Numer. Math. Theory Methods Appl.},
  FJOURNAL = {Numerical Mathematics. Theory, Methods and Applications},
    VOLUME = {15},
      YEAR = {2022},
    NUMBER = {4},
     PAGES = {1147--1172},
      ISSN = {1004-8979,2079-7338},
   MRCLASS = {65K10 (62H35 65D18 65M06 65M12)},
  MRNUMBER = {4499607},
       DOI = {10.4208/nmtma.oa-2022-0008s},
       URL = {https://doi.org/10.4208/nmtma.oa-2022-0008s},
}

@article {LiuTwo2022,
    AUTHOR = {Liu, Chaoyu and Qiao, Zhonghua and Zhang, Qian},
     TITLE = {Two-phase segmentation for intensity inhomogeneous images by
              the {A}llen-{C}ahn local binary fitting model},
   JOURNAL = {SIAM J. Sci. Comput.},
  FJOURNAL = {SIAM Journal on Scientific Computing},
    VOLUME = {44},
      YEAR = {2022},
    NUMBER = {1},
     PAGES = {B177--B196},
      ISSN = {1064-8275,1095-7197},
   MRCLASS = {65M06 (65K10 65M12 65M22 68U10 94A08)},
  MRNUMBER = {4372652},
MRREVIEWER = {Qiaolin\ He},
       DOI = {10.1137/21M1421830},
       URL = {https://doi.org/10.1137/21M1421830},
}

@article {LiuMulti2023,
    AUTHOR = {Liu, Chaoyu and Qiao, Zhonghua and Zhang, Qian},
     TITLE = {Multi-phase image segmentation by the {A}llen-{C}ahn
              {C}han-{V}ese model},
   JOURNAL = {Comput. Math. Appl.},
  FJOURNAL = {Computers \& Mathematics with Applications. An International
              Journal},
    VOLUME = {141},
      YEAR = {2023},
     PAGES = {207--220},
      ISSN = {0898-1221,1873-7668},
   MRCLASS = {65M06 (35K57)},
  MRNUMBER = {4589171},
       DOI = {10.1016/j.camwa.2022.12.020},
       URL = {https://doi.org/10.1016/j.camwa.2022.12.020},
}

@article {WenHydrodynamics2024,
    AUTHOR = {Wen, Zuowei and Valizadeh, Navid and Rabczuk, Timon and
              Zhuang, Xiaoying},
     TITLE = {Hydrodynamics of multicomponent vesicles: a phase-field
              approach},
   JOURNAL = {Comput. Methods Appl. Mech. Engrg.},
  FJOURNAL = {Computer Methods in Applied Mechanics and Engineering},
    VOLUME = {432},
      YEAR = {2024},
     PAGES = {Paper No. 117390, 29},
      ISSN = {0045-7825,1879-2138},
   MRCLASS = {74F10 (65M60 76Exx)},
  MRNUMBER = {4803758},
       DOI = {10.1016/j.cma.2024.117390},
       URL = {https://doi.org/10.1016/j.cma.2024.117390},
}

@article {YangPhase2023,
    AUTHOR = {Yang, Junxiang and Kim, Junseok},
     TITLE = {Phase-field simulation of multiple fluid vesicles with a
              consistently energy-stable implicit-explicit method},
   JOURNAL = {Comput. Methods Appl. Mech. Engrg.},
  FJOURNAL = {Computer Methods in Applied Mechanics and Engineering},
    VOLUME = {417},
      YEAR = {2023},
     PAGES = {Paper No. 116403, 40},
      ISSN = {0045-7825,1879-2138},
   MRCLASS = {76T20},
  MRNUMBER = {4638759},
       DOI = {10.1016/j.cma.2023.116403},
       URL = {https://doi.org/10.1016/j.cma.2023.116403},
}

@article {TangPhase2023,
    AUTHOR = {Tang, Xiaoxia and Li, Shuwang and Lowengrub, John S. and Wise,
              Steven M.},
     TITLE = {Phase field modeling and computation of vesicle growth or
              shrinkage},
   JOURNAL = {J. Math. Biol.},
  FJOURNAL = {Journal of Mathematical Biology},
    VOLUME = {86},
      YEAR = {2023},
    NUMBER = {6},
     PAGES = {Paper No. 97, 31},
      ISSN = {0303-6812,1432-1416},
   MRCLASS = {92B05 (65M06 65M12 65M55 76T99)},
  MRNUMBER = {4595001},
       DOI = {10.1007/s00285-023-01928-2},
       URL = {https://doi.org/10.1007/s00285-023-01928-2},
}

@article {AshourPhase2023,
    AUTHOR = {Ashour, Mohammed and Valizadeh, Navid and Rabczuk, Timon},
     TITLE = {Phase-field {N}avier-{S}tokes model for vesicle doublets
              hydrodynamics in incompressible fluid flow},
   JOURNAL = {Comput. Methods Appl. Mech. Engrg.},
  FJOURNAL = {Computer Methods in Applied Mechanics and Engineering},
    VOLUME = {412},
      YEAR = {2023},
     PAGES = {Paper No. 116063, 31},
      ISSN = {0045-7825,1879-2138},
   MRCLASS = {76Z05},
  MRNUMBER = {4587962},
MRREVIEWER = {David\ J.\ Smith},
       DOI = {10.1016/j.cma.2023.116063},
       URL = {https://doi.org/10.1016/j.cma.2023.116063},
}

@article {GuA2016,
    AUTHOR = {Gu, Rui and Wang, Xiaoqiang and Gunzburger, Max},
     TITLE = {A two phase field model for tracking vesicle-vesicle adhesion},
   JOURNAL = {J. Math. Biol.},
  FJOURNAL = {Journal of Mathematical Biology},
    VOLUME = {73},
      YEAR = {2016},
    NUMBER = {5},
     PAGES = {1293--1319},
      ISSN = {0303-6812,1432-1416},
   MRCLASS = {92C05 (65M70 70G75)},
  MRNUMBER = {3555370},
       DOI = {10.1007/s00285-016-0994-4},
       URL = {https://doi.org/10.1007/s00285-016-0994-4},
}

@article {ZhangA2022,
    AUTHOR = {Zhang, Wenqiang and Shahmardi, Armin and Choi, Kwing-so and
              Tammisola, Outi and Brandt, Luca and Mao, Xuerui},
     TITLE = {A phase-field method for three-phase flows with icing},
   JOURNAL = {J. Comput. Phys.},
  FJOURNAL = {Journal of Computational Physics},
    VOLUME = {458},
      YEAR = {2022},
     PAGES = {Paper No. 111104, 19},
      ISSN = {0021-9991,1090-2716},
   MRCLASS = {65M06 (76T30)},
  MRNUMBER = {4393880},
       DOI = {10.1016/j.jcp.2022.111104},
       URL = {https://doi.org/10.1016/j.jcp.2022.111104},
}

@article {XuThree2019,
    AUTHOR = {Xu, Shixin and Alber, Mark and Xu, Zhiliang},
     TITLE = {Three-phase model of visco-elastic incompressible fluid flow
              and its computational implementation},
   JOURNAL = {Commun. Comput. Phys.},
  FJOURNAL = {Communications in Computational Physics},
    VOLUME = {25},
      YEAR = {2019},
    NUMBER = {2},
     PAGES = {586--624},
      ISSN = {1815-2406,1991-7120},
   MRCLASS = {76A10 (76M10 76T30 92C35)},
  MRNUMBER = {3870373},
       DOI = {10.4208/cicp.oa-2017-0167},
       URL = {https://doi.org/10.4208/cicp.oa-2017-0167},
}

@article {AiharaHighly2023,
    AUTHOR = {Aihara, Shintaro and Takada, Naoki and Takaki, Tomohiro},
     TITLE = {Highly conservative {A}llen-{C}ahn-type multi-phase-field
              model and evaluation of its accuracy},
   JOURNAL = {Theor. Comput. Fluid Dyn.},
  FJOURNAL = {Theoretical and Computational Fluid Dynamics},
    VOLUME = {37},
      YEAR = {2023},
    NUMBER = {5},
     PAGES = {639--659},
      ISSN = {0935-4964,1432-2250},
   MRCLASS = {76T30},
  MRNUMBER = {4662481},
       DOI = {10.1007/s00162-023-00655-0},
       URL = {https://doi.org/10.1007/s00162-023-00655-0},
}

@article {ZhangMulti2024,
    AUTHOR = {Zhang, Haodong and Wang, Fei and Nestler, Britta},
     TITLE = {Multi-component electro-hydro-thermodynamic model with
              phase-field method. {I}. {D}ielectric},
   JOURNAL = {J. Comput. Phys.},
  FJOURNAL = {Journal of Computational Physics},
    VOLUME = {505},
      YEAR = {2024},
     PAGES = {Paper No. 112907, 22},
      ISSN = {0021-9991,1090-2716},
   MRCLASS = {76W05 (78A25 80A17)},
  MRNUMBER = {4715227},
MRREVIEWER = {A.\ F.\ Ghaleb},
       DOI = {10.1016/j.jcp.2024.112907},
       URL = {https://doi.org/10.1016/j.jcp.2024.112907},
}

@article {DondlPhase2024,
    AUTHOR = {Dondl, Patrick and Maione, Alberto and Wolff-Vorbeck, Steve},
     TITLE = {Phase field model for multi-material shape optimization of
              inextensible rods},
   JOURNAL = {ESAIM Control Optim. Calc. Var.},
  FJOURNAL = {ESAIM. Control, Optimisation and Calculus of Variations},
    VOLUME = {30},
      YEAR = {2024},
     PAGES = {Paper No. 50, 26},
      ISSN = {1292-8119,1262-3377},
   MRCLASS = {49Q10 (35Q74 49J45 74B20 74P10)},
  MRNUMBER = {4767214},
       DOI = {10.1051/cocv/2024039},
       URL = {https://doi.org/10.1051/cocv/2024039},
}

@article {ZhangCavitation2023,
    AUTHOR = {Zhang, Lu-Wen and Ye, Jia-Yu},
     TITLE = {Cavitation impact damage of polymer: a multi-physics approach
              incorporating phase-field},
   JOURNAL = {Comput. Methods Appl. Mech. Engrg.},
  FJOURNAL = {Computer Methods in Applied Mechanics and Engineering},
    VOLUME = {417},
      YEAR = {2023},
     PAGES = {Paper No. 116420, 25},
      ISSN = {0045-7825,1879-2138},
   MRCLASS = {76B10},
  MRNUMBER = {4642937},
       DOI = {10.1016/j.cma.2023.116420},
       URL = {https://doi.org/10.1016/j.cma.2023.116420},
}

@article {MaHigh2023,
    AUTHOR = {Ma, Pu-Song and Zhang, Lu-Wen},
     TITLE = {High pressure and long-term gas diffusion coupled damage of
              composites through a multi-physical phase field framework},
   JOURNAL = {Comput. Methods Appl. Mech. Engrg.},
  FJOURNAL = {Computer Methods in Applied Mechanics and Engineering},
    VOLUME = {410},
      YEAR = {2023},
     PAGES = {Paper No. 116006, 21},
      ISSN = {0045-7825,1879-2138},
   MRCLASS = {76N15 (76R50)},
  MRNUMBER = {4566079},
       DOI = {10.1016/j.cma.2023.116006},
       URL = {https://doi.org/10.1016/j.cma.2023.116006},
}

@article {LiAnisotropic2021,
    AUTHOR = {Li, Pengfei and Yvonnet, Julien and Combescure, Christelle and
              Makich, Hamid and Nouari, Mohammed},
     TITLE = {Anisotropic elastoplastic phase field fracture modeling of
              3{D} printed materials},
   JOURNAL = {Comput. Methods Appl. Mech. Engrg.},
  FJOURNAL = {Computer Methods in Applied Mechanics and Engineering},
    VOLUME = {386},
      YEAR = {2021},
     PAGES = {Paper No. 114086, 27},
      ISSN = {0045-7825,1879-2138},
   MRCLASS = {74A45 (74C05 74E10)},
  MRNUMBER = {4306912},
       DOI = {10.1016/j.cma.2021.114086},
       URL = {https://doi.org/10.1016/j.cma.2021.114086},
}

@article{Chao2016,
    author = {Chao, Youchuang and Mak, Sze Yi and Shum, Ho Cheung},
    title = {The transformation dynamics towards equilibrium in non-equilibrium w/w/o double emulsions},
    journal = {Applied Physics Letters},
    volume = {109},
    number = {18},
    pages = {181601},
    year = {2016},
    month = {10},
    issn = {0003-6951},
    doi = {10.1063/1.4966902},
    url = {https://doi.org/10.1063/1.4966902}
}

@article{Nisisako2005,
author = {Nisisako, Takasi and Okushima, Shingo and Torii, Toru},
year = {2005},
month = {06},
pages = {23-27},
title = {Controlled formulation of monodisperse double emulsions in a multiple-phase microfluidic system},
volume = {1},
journal = {Soft Matter},
doi = {10.1039/b501972a},
URL = {http://dx.doi.org/10.1039/B501972A},
}

@article{Xu2021,
  author = {Xu, Siyuan and Nisisako, Takasi},
  title = {Surfactant-Laden Janus Droplets with Tunable Morphologies and Enhanced Stability for Fabricating Lens-Shaped Polymeric Microparticles},
  journal = {Micromachines},
  year = {2021}, 
  volume = {12},
  number = {1},
  pages = {29},
  doi = {10.3390/mi12010029}
}

@article {Wu2023Highly,
    AUTHOR = {Wu, Jingwen and Yang, Junxiang and Tan, Zhijun},
     TITLE = {Highly efficient variant of {SAV} approach for the
              incompressible multi-component phase-field fluid models},
   JOURNAL = {Comput. Math. Appl.},
  FJOURNAL = {Computers \& Mathematics with Applications. An International
              Journal},
    VOLUME = {145},
      YEAR = {2023},
     PAGES = {24--40},
      ISSN = {0898-1221,1873-7668},
   MRCLASS = {76T30 (65M06)},
  MRNUMBER = {4604810},
MRREVIEWER = {Tore\ Fl\aa tten},
       DOI = {10.1016/j.camwa.2023.06.004},
       URL = {https://doi.org/10.1016/j.camwa.2023.06.004},
}

@article {Tan2023An,
    AUTHOR = {Tan, Zhijun and Yang, Junxiang and Chen, Jianjun and Kim,
              Junseok},
     TITLE = {An efficient time-dependent auxiliary variable approach for
              the three-phase conservative {A}llen-{C}ahn fluids},
   JOURNAL = {Appl. Math. Comput.},
  FJOURNAL = {Applied Mathematics and Computation},
    VOLUME = {438},
      YEAR = {2023},
     PAGES = {Paper No. 127599, 29},
      ISSN = {0096-3003,1873-5649},
   MRCLASS = {65M06 (35K57)},
  MRNUMBER = {4494479},
       DOI = {10.1016/j.amc.2022.127599},
       URL = {https://doi.org/10.1016/j.amc.2022.127599},
}

@article {Yang2021Numerical,
    AUTHOR = {Yang, Junxiang and Kim, Junseok},
     TITLE = {Numerical study of the ternary {C}ahn-{H}illiard fluids by
              using an efficient modified scalar auxiliary variable
              approach},
   JOURNAL = {Commun. Nonlinear Sci. Numer. Simul.},
  FJOURNAL = {Communications in Nonlinear Science and Numerical Simulation},
    VOLUME = {102},
      YEAR = {2021},
     PAGES = {Paper No. 105923, 24},
      ISSN = {1007-5704,1878-7274},
   MRCLASS = {65M06 (35Q35 65M55)},
  MRNUMBER = {4277068},
MRREVIEWER = {Zurab\ Kiguradze},
       DOI = {10.1016/j.cnsns.2021.105923},
       URL = {https://doi.org/10.1016/j.cnsns.2021.105923},
}

@article {Yang2024Efficiently,
    AUTHOR = {Yang, Junxiang and Han, Huan and Liu, Shuhong and Zuo,
              Zhigang},
     TITLE = {Efficiently and consistently energy-stable {$L^2$}-phase-field
              method for the incompressible ternary fluid problems},
   JOURNAL = {Phys. D},
  FJOURNAL = {Physica D. Nonlinear Phenomena},
    VOLUME = {470},
      YEAR = {2024},
     PAGES = {Paper No. 134346, 19},
      ISSN = {0167-2789,1872-8022},
   MRCLASS = {76Txx},
  MRNUMBER = {4794975},
       DOI = {10.1016/j.physd.2024.134346},
       URL = {https://doi.org/10.1016/j.physd.2024.134346},
}

@article {Lu2025Decoupled,
    AUTHOR = {Lu, Nan and Wang, Chenxi and Zhang, Lun and Zhang, Zhen},
     TITLE = {Decoupled and energy stable schemes for phase-field surfactant
              model based on mobility operator splitting technique},
   JOURNAL = {J. Comput. Appl. Math.},
  FJOURNAL = {Journal of Computational and Applied Mathematics},
    VOLUME = {459},
      YEAR = {2025},
     PAGES = {Paper No. 116365, 19},
      ISSN = {0377-0427,1879-1778},
   MRCLASS = {65M22 (35K51 65M12)},
  MRNUMBER = {4829405},
       DOI = {10.1016/j.cam.2024.116365},
       URL = {https://doi.org/10.1016/j.cam.2024.116365},
}

@article {Caginalp1988Dynamics,
    AUTHOR = {Caginalp, G. and Fife, P. C.},
     TITLE = {Dynamics of layered interfaces arising from phase boundaries},
   JOURNAL = {SIAM J. Appl. Math.},
  FJOURNAL = {SIAM Journal on Applied Mathematics},
    VOLUME = {48},
      YEAR = {1988},
    NUMBER = {3},
     PAGES = {506--518},
      ISSN = {0036-1399},
   MRCLASS = {80A20 (82A25)},
  MRNUMBER = {941098},
       DOI = {10.1137/0148029},
       URL = {https://doi.org/10.1137/0148029},
}

@book {LeVeque2007Finite,
    AUTHOR = {LeVeque, Randall J.},
     TITLE = {Finite difference methods for ordinary and partial
              differential equations},
      NOTE = {Steady-state and time-dependent problems},
 PUBLISHER = {Society for Industrial and Applied Mathematics (SIAM),
              Philadelphia, PA},
      YEAR = {2007},
     PAGES = {xvi+341},
      ISBN = {978-0-898716-29-0},
   MRCLASS = {65L12 (35A35 65-01 65M06 65N06)},
  MRNUMBER = {2378550},
MRREVIEWER = {G.\ W.\ Hedstrom},
       DOI = {10.1137/1.9780898717839},
       URL = {https://doi.org/10.1137/1.9780898717839},
}
\bibliographystyle{ws-m3as}

\end{document}